\numberwithin{equation}{section}
\newtheorem{thm}{Theorem}[section]
\newtheorem{lem}[thm]{Lemma}
\newtheorem{Def}[thm]{Definition}
\theoremstyle{definition}
\newtheorem{Ass}[thm]{Assumption}
\newtheorem{rem}[thm]{Remark}
\newtheorem{Nott}[thm]{Notation}
\newtheorem{Ex}[thm]{Example}
\DeclareMathOperator{\DIV}{div}
\newcommand{\R}{\mathbb{R}}
\newcommand{\Td}{\mathbb{T}^{d}}
\newcommand{\N}{\mathbb{N}}
\newcommand{\p}{\partial}
\newcommand {\f}{\frac}
\newcommand{\eps}{\varepsilon}
\newcommand{\diff}{\mathop{}\!\mathrm{d}}
\newcommand{\ie}{\emph{i.e.}\;}
\newcommand{\ueps}{u_{\varepsilon}}
\newcommand{\weak}{\rightharpoonup}
\DeclarePairedDelimiter{\norm}{\lVert}{\rVert}
\newcommand{\doublewidetilde}[1]{{%
  \mathpalette\double@widetilde{#1}%
}}
\newcommand{\double@widetilde}[2]{%
  \sbox\z@{$\m@th#1\widetilde{#2}$}%
  \ht\z@=.9\ht\z@
  \widetilde{\box\z@}%
}
\author{Charles Elbar}
\address{{\it Charles Elbar:} Sorbonne Universit\'{e}, Laboratoire Jacques-Louis Lions (LJLL), F-75005 Paris, France}
\email{charles.elbar@sorbonne-universite.fr}
\thanks{}
\author{Jakub Skrzeczkowski}
\address{{\it Jakub Skrzeczkowski: } Institute of Mathematics of Polish Academy of Sciences; Faculty of Mathematics, Informatics and Mechanics, University of Warsaw, Poland}
\email{jakub.skrzeczkowski@student.uw.edu.pl}
\thanks{J.S. was supported by the National Agency of Academic Exchange project "Singular limits in parabolic equations" no. BPN/BEK/2021/1/00044. Authors are grateful to Benoît Perthame for fruitful discussions and helpful suggestions that greatly improved this paper.	}
\begin{document}

\title[]{Degenerate Cahn-Hilliard equation: From nonlocal to local}

\begin{abstract}
There has been recently an important interest in deriving rigorously the Cahn-Hilliard equation from the nonlocal equation, also called aggregation equation. So far, only non-degenerate mobilities were treated. 
Since we are motivated by models for the biomechanics of living tissues, it is useful to include degenerate motilities. In this framework, we present a new method to show the convergence of the nonlocal to the local {\em degenerate} Cahn-Hilliard equation. The method includes the use of nonlocal Poincaré and compactness  inequalities. 
\end{abstract}

\keywords{Degenerate Cahn-Hilliard equation; Nonlocal Cahn-Hilliard
equation; Aggregation-Diffusion; Singular limit}

\subjclass{35B40, 35D30, 35K25, 35K55}

\maketitle
\setcounter{tocdepth}{1}

{\bf Conflict of interest statement:} The authors have no conflicts of interest to declare that are relevant to the content of this article.\\

{\bf Data availability statement:} Data availability is not applicable to this article as no new data were created or analyzed in this study.

\section{Introduction}
Several recent papers~\cite{MR4408204,MR4093616,MR4198717,MR4248454} have addressed the problem of  deriving rigorously the Cahn-Hilliard equation from the nonlocal equation, also called aggregation equation~\cite{carrillo2019aggregation}. In these works, only the case of non-degenerate mobilities is treated, which avoids the delicate question of defining the limit of low-order products that one encounters for  nonlocal degenerate mobility that we present now. The degenerate model is written
 \begin{align*}
\partial_t u = \DIV(u \nabla \mu),\quad \text{in}\quad &(0,+\infty)\times \Td,\\
\mu = B[u] + F'(u),\quad \text{in}\quad &(0,+\infty)\times \Td, 
\end{align*}
equipped with an initial datum $u^{0}\ge 0$. Here, $\Td$ is the $d$-dimensional flat torus, $B$ is the nonlocal operator $B=B_{\varepsilon}$ defined with
\begin{equation}\label{operatorB}
B_\eps[u](x) = \f{1}{\eps^{2}}(u(x)-\omega_{\eps}\ast u(x))=\f{1}{\eps^{2}}\int_{\Td}\omega_{\eps}(y)(u(x)-u(x-y)) \diff y
\end{equation}
for $\eps$ small enough and $\omega_{\eps}$ is the usual radial mollification kernel $\omega_{\eps}(x)=\frac{1}{\eps^{d}}\omega(\frac{x}{\eps})$ with $\omega$ compactly supported in the unit ball of $\R^{d}$ satisfying
\begin{equation}
\int_{\R^{d}}\omega(y) \diff y =1, \quad \int_{\R^{d}} y\, \omega(y) \diff y =0,  \quad \int_{\R^{d}}   y_i y_j \omega \diff y = \delta_{i,j}\f{2D}{d} , \quad \int_{\R^{d}} \omega(y) |y|^3 \diff y < \infty,
\label{as:omega}
\end{equation}

\noindent for some constant $D>0$. Our target is to prove that as $\varepsilon \to 0$, the constructed solutions of 
 \begin{align}
\partial_t u = \DIV(u \nabla \mu),\quad \text{in}\quad &(0,+\infty)\times \Td,\label{eq:CHE1}\\
\mu = B_{\eps}[u] + F'(u),\quad \text{in}\quad &(0,+\infty)\times \Td \label{eq:CHE2},
\end{align}
tend to the weak solution of the degenerate Cahn-Hilliard equation
 \begin{align}
\partial_t u = \DIV(u \nabla \mu),\quad \text{in}\quad &(0,+\infty)\times \Td,\label{eq:CH1}\\
\mu = -D\Delta u + F'(u),\quad \text{in}\quad &(0,+\infty)\times \Td \label{eq:CH2}.
\end{align}

The product $u \nabla \mu$ in the limiting system is not a priori well defined since we cannot control third-order derivatives. Passing to the limit is thus the challenge we overcome here.

Our motivation for this work is fourfold. 

\begin{itemize}
    \item Firstly, the interest for the nonlocal Cahn-Hilliard equation is an old problem that can be traced back to Giacomin and Lebowitz~\cite{MR1453735,MR1638739}. These seminal works establish the derivation of the degenerate nonlocal Cahn-Hilliard equation departing from stochastic systems of particles. However, they left open the question of deriving the local degenerate  Cahn-Hilliard equation from the nonlocal one. This is the challenge we overcome here.
    \item Secondly, a revival of interest for this problem appeared in the last years with several papers~\cite{MR4408204,MR4093616,MR4198717,MR4248454} deriving the local from the nonlocal Cahn-Hilliard equation in the non-degenerate case.
    \item Third, the nonlocal Cahn-Hilliard equation can be seen as a porous medium equation with a smooth advection term that is well understood, conversely to the local degenerate Cahn-Hilliard equation.   \item Finally, the nonlocal Cahn-Hilliard equation \eqref{eq:CHE1}--\eqref{eq:CHE2} is in fact an aggregation-diffusion equation with a nonlocal term corresponding to the aggregation effect \cite{carrillo2019aggregation}. Thus, in this paper, we show that if the nonlocal effect is appropriately scaled, one approaches Cahn-Hilliard equation. This limit was formally stated for instance in \cite{delgadino2018convergence,bernoff2016biological,falco2022local} and our work provides a rigorous mathematical argument for this approximation.
\end{itemize}

\subsection{Main result}

We make the following assumptions on the potential $F$.

\begin{Ass}[potential $F$]\label{ass:potentialF} For the interaction potential we assume that there exists $k \geq 2$ and a decomposition $F = F_1 + F_2$ such that
\begin{enumerate}[label=(\Alph*)]
    \item \label{assumption_pot_1} $F_1$, $F_2$ are of class $C^2$,
    \item \label{assumption_pot_3} $F_1 = 0$ or $F_1$ is a convex function which has $k$-growth in the sense that for some nonnegative constants $C_{1}$, ..., $C_8$ we have
$$
C_{1}|u|^k - C_{2}  \leq F_1(u) \leq C_{3}|u|^k + C_{4}.  
$$
$$
 C_{5}|u|^{(k-2)} - C_{6}\leq F_1''(u) \leq C_{7}|u|^{(k-2)} + C_{8}, 
$$
   \item \label{assumption_pot_4} $F_2$ has bounded second derivative \ie $\|F_2''\|_{\infty} < \infty$ and $F_2(u) \geq -C_{9} - C_{10}\, u^2$ where $C_{10}$ is sufficiently small: more precisely $4\,C_{10} < C_p$ with $C_p$ being the constant in Lemma~\ref{lem:Poincare_with_average}.
\end{enumerate}
\end{Ass}

\begin{Ex}\label{ex:potentials}
The following potentials satisfy Assumption \ref{ass:potentialF}.
\begin{enumerate}[label=(\arabic*)]    \item power-type potential $F(u) = |u|^{\gamma}$, $\gamma>2$ used in the context of tumour growth models \cite{Perthame-Hele-Shaw,david2021incompressible,elbar2021degenerate,DEBIEC2021204},
    \item double-well potential $F(u) = u^2\,(u-1)^2$ which is an approximation of logarithmic double-well potential often used in Cahn-Hilliard equation, see \cite[Chapter 1]{MR4001523},
    \item\label{ex_general_F} any $F \in C^2$ such that for some interval $I \subset \R$ we have $F''(u)>a>0$ for $u \in \R \setminus I$ and 
\begin{align*}
C\, |u|^k - C &\leq F(u) \leq C \, |u|^k + C &\mbox{ for all } u \in \R \setminus I,\\
C\, |u|^{k-2} - C &\leq F''(u) \leq C \, |u|^{k-2} + C  &\mbox{ for all } u \in \R \setminus I,
\end{align*}
see Lemma \ref{lem:potentials_satisfying_ass} for details.
\end{enumerate}
Note that (3) is a more general version of (2).
\end{Ex}

\begin{Nott}[exponents $s$ and $k$] \label{not:s_and_k}
In what follows we write
$$
k = 
\begin{cases}
2 &\mbox{ if } F_1=0,\\
k &\mbox{ if } F_1\neq0.\\
\end{cases}
$$
We also define $s = \frac{2k}{k-1}$ and $s'$ its conjugate exponent.
\end{Nott}

Now, we define weak solutions of the nonlocal and local degenerate Cahn-Hilliard equation.

\begin{Def}
\label{def:weak_sol_local}\noindent We say that $\ueps$ is a weak solution of \eqref{eq:CHE1}-\eqref{eq:CHE2} if
\begin{align*}
&\ueps \in L^{\infty}(0,T;L^{k}(\Td)), \qquad \quad \p_{t} \ueps \in L^{2}(0,T;W^{-1,s'}(\Td)) \\
&\nabla \ueps \in L^2((0,T)\times\Td), \qquad \quad
\sqrt{F_{1}''(\ueps)}\nabla \ueps\in L^2((0,T)\times\Td),
\end{align*}
$u(0,x)=u_0(x)$ a.e. in $\Td$ and for all $\varphi \in L^2(0,T; W^{1,\infty}(\Td))$
\begin{equation}\label{eq:weaksol}
    \int_{0}^{T}\langle\p_{t} \ueps,\varphi\rangle_{(W^{-1,s'}(\Td), W^{1,s}(\Td))}=-\int_{0}^{T}\int_{\Td} \ueps \nabla B_{\varepsilon}[u_{\varepsilon}]\cdot\nabla\varphi-\int_{0}^{T}\int_{\Td}\ueps F''(\ueps)\nabla \ueps \cdot\nabla\varphi.
\end{equation}
\end{Def}

\begin{Def}\label{def:weak_sol_limit}
We say that $u$ is a weak solution of~\eqref{eq:CH1}-\eqref{eq:CH2} if 
\begin{align*}
&u \in L^{\infty}(0,T;L^{k}(\Td))\cap L^{2}(0,T;H^{2}(\Td)),  \quad \p_{t} u \in L^{2}(0,T;W^{-1,s'}(\Td)),  \\
&\sqrt{F_{1}''(u)}\nabla u\in L^2((0,T)\times\Td),
\end{align*}
 $u(0,x)=u_0(x)$ a.e. in $\Td$ and if for all $\varphi \in L^2(0,T; W^{2,\infty}(\Td))$ we have 
\begin{multline*}
\int_{0}^{T}\langle\p_{t}u,\varphi\rangle_{(W^{-1,s'}(\Td),W^{1,s}(\Td))} = -D\int_0^T \int_{\Td}\Delta u\, \nabla u \cdot \nabla\varphi 
-D\int_0^T \int_{\Td}u\,\Delta u\,\Delta\varphi \\-\int_{0}^{T}\int_{\Td}u\,F''(u)\,\nabla u\cdot\nabla\varphi.     
\end{multline*}
\end{Def}
\begin{rem}[initial condition]
In Definitions \ref{def:weak_sol_local} and \ref{def:weak_sol_limit} we can evaluate pointwise value $u(0,x)$ because by \cite[Lemma 7.1]{MR3014456}, we know that $u \in C(0,T; W^{-1,s'}(\Td))$.
\end{rem}

\noindent With these assumptions we can construct solutions to~\eqref{eq:CHE1}-\eqref{eq:CHE2}.
\begin{thm}[Existence of solutions for the nonlocal system]\label{thm:weaksoldelta}
Let $\varepsilon_0$ be given by 
\begin{equation}\label{eq:eps_0_restriction}
\varepsilon_0 := \min\left(\varepsilon_0^A, \varepsilon_0^B, \frac{1}{\sqrt{2\, \|F_2''\|_{\infty}}}\right)    
\end{equation}
where $\varepsilon_0^A$ and $\varepsilon_0^B$ are given in Lemma \ref{lem:Poincare_with_average} and \ref{lem:poincare_nonlocal_H1_L2} respectively. Let $\varepsilon < \varepsilon_0$. Let $u^{0}\ge 0$ be an initial datum with finite energy and entropy $E_{\eps}(u^0), \Phi(u^0) < \infty$ defined in ~\eqref{eq:intro_energy}-\eqref{eq:intro_entropy}. There exists a global weak solution $\ueps$ of~\eqref{eq:CHE1}-\eqref{eq:CHE2} in the sense defined by Definition~\ref{def:weak_sol_local}. It satisfies the dissipation of energy and entropy \eqref{eq:energy1}-\eqref{eq:entropy1} with $u = \ueps$, $\mu = \mu_{\varepsilon}$. Moreover, $\ueps \geq 0$.
\end{thm}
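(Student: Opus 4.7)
Because the mobility degenerates at $u=0$, the system \eqref{eq:CHE1}-\eqref{eq:CHE2} cannot be solved directly; the standard remedy is a regularization-compactness scheme. I would introduce a non-degenerate approximation by replacing $u$ with $u+\delta$ in the mobility (and, if needed, truncating $F'$ to avoid growth issues, undoing the truncation in a second limit), giving
\begin{equation*}
\partial_t u_\delta = \DIV((u_\delta+\delta)\nabla \mu_\delta), \qquad \mu_\delta = B_\varepsilon[u_\delta] + F'(u_\delta).
\end{equation*}
For fixed $\varepsilon>0$, $B_\varepsilon$ is a bounded smoothing linear operator on $L^p(\Td)$; since the mobility is uniformly bounded below by $\delta$, the regularized system is a quasilinear parabolic problem and the existence of $u_\delta$ follows from a Galerkin scheme on eigenfunctions of $-\Delta$, or alternatively from a Schauder fixed-point argument applied to the linearization.

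Next, I derive the two a priori estimates, which must be uniform in $\delta$. Testing the approximate equation by $\mu_\delta$ yields the energy identity
\begin{equation*}
\tfrac{d}{dt}E_\varepsilon(u_\delta) + \int_{\Td}(u_\delta+\delta)\,|\nabla \mu_\delta|^2\,dx = 0,
\end{equation*}
and testing by the entropy variable $\Phi'(u_\delta)$, chosen so that $\Phi''(u)=1/(u+\delta)$, gives the entropy dissipation
\begin{equation*}
\tfrac{d}{dt}\Phi(u_\delta) + \int_{\Td}\nabla u_\delta\cdot \nabla B_\varepsilon[u_\delta]\,dx + \int_{\Td}F''(u_\delta)\,|\nabla u_\delta|^2\,dx = 0.
\end{equation*}
The nonlocal term on the left is equivalent to a nonlocal $H^1$-seminorm of $u_\delta$; invoking Lemmas \ref{lem:Poincare_with_average} and \ref{lem:poincare_nonlocal_H1_L2}, together with the smallness constraint $4C_{10}<C_p$ in Assumption~\ref{ass:potentialF}\ref{assumption_pot_4} and the restriction $\varepsilon<\varepsilon_0$, I absorb the non-positive $-C_{10}u_\delta^2$ contribution from $F_2$ and obtain uniform-in-$\delta$ bounds of $u_\delta$ in $L^\infty(0,T;L^k(\Td))$, of $\nabla u_\delta$ in $L^2$, and of the degenerate diffusion $\sqrt{F_1''(u_\delta)}\nabla u_\delta$ in $L^2$.

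To pass to the limit $\delta\to0$, I decompose the flux as
\begin{equation*}
(u_\delta+\delta)\nabla \mu_\delta = (u_\delta+\delta)\nabla B_\varepsilon[u_\delta] + (u_\delta+\delta)\,F''(u_\delta)\,\nabla u_\delta,
\end{equation*}
noting that $\nabla B_\varepsilon$ is bounded on $L^p$ for $\varepsilon$ fixed. Using H\"older's inequality with the growth bound \ref{assumption_pot_3} on $F_1''$, I control $(u_\delta+\delta)F''(u_\delta)\nabla u_\delta$ in $L^2(0,T;L^{s'}(\Td))$ (with $s$ as in Notation \ref{not:s_and_k}), and hence $\partial_t u_\delta$ in $L^2(0,T;W^{-1,s'}(\Td))$. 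Aubin-Lions then yields strong convergence $u_\delta \to u_\varepsilon$ in $L^p((0,T)\times\Td)$ for suitable $p$, sufficient to identify the limits of the nonlinear terms in the weak formulation of Definition~\ref{def:weak_sol_local} and, by lower semicontinuity, to recover the dissipation inequalities. Non-negativity of $u_\varepsilon$ follows by choosing $\Phi$ so that $\Phi(u)\to+\infty$ as $u\to0^-$ (e.g.\ $\Phi(u)=u\log u -u + 1$ extended by $+\infty$ on negatives): the uniform entropy bound forces $u_\varepsilon\geq 0$ a.e. The main technical obstacle is closing the entropy-energy loop: the nonlocal Poincar\'e-type inequalities must simultaneously dominate the local $H^1$-norm and absorb the negative $F_2$ contribution, which is precisely why the thresholds $\varepsilon<\varepsilon_0$ and $4C_{10}<C_p$ in \eqref{eq:eps_0_restriction} and Assumption~\ref{ass:potentialF} are imposed.
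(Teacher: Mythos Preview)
Your overall scheme---regularize the degenerate mobility, derive uniform energy/entropy estimates, pass to the limit via Aubin--Lions, and recover nonnegativity from an entropy blow-up---matches the paper. The specific regularization you choose, however, has a real gap.

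With mobility $u_\delta+\delta$, nothing prevents the approximate solution from dipping below $-\delta$: the equation is nonlocal, no comparison principle is available a priori, and Galerkin approximants certainly need not respect a sign. Wherever $u_\delta+\delta<0$ the principal coefficient becomes negative, parabolicity is lost, the energy dissipation $\int(u_\delta+\delta)|\nabla\mu_\delta|^2$ is no longer nonnegative, and your entropy test function $\Phi'(u_\delta)$ (with $\Phi''(s)=1/(s+\delta)$, hence $\Phi'(s)=\log(s+\delta)+\text{const}$) is not even defined. Your closing nonnegativity argument is then circular: you need $u_\delta>-\delta$ to run the entropy identity, yet you invoke the entropy bound to obtain the sign. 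The claim ``$\Phi(u)\to+\infty$ as $u\to0^-$'' is also off: the shifted entropy blows up as $u\to-\delta^+$, not at $0$.

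The paper repairs this by taking a \emph{two-sided} truncated mobility $T_\delta(u)\in[\delta,1/\delta]$ (see~\eqref{regmob}), so the approximate problem is uniformly parabolic irrespective of the sign of $u_\delta$, and classical solutions are produced by a Schauder fixed point (this step also needs $F\in C^4$, obtained by mollifying $F$---not by truncating $F'$ as you suggest). The corresponding entropy $\phi_\delta$ with $\phi_\delta''=1/T_\delta$ is globally defined and satisfies $\phi_\delta(x)\geq x^2/(2\delta)$ for $x<0$; the uniform bound on $\int\phi_\delta(u_\delta)$ together with $\phi_\delta(-\alpha)\to\infty$ as $\delta\to0$ then forces $|\{u\le-\alpha\}|=0$ for every $\alpha>0$ in the limit. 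The upper cut-off $1/\delta$ is also used, in the $L^\infty$ bound for the fixed-point map.
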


\noindent Our main result reads as follows.

\begin{thm}[Convergence of nonlocal to local Cahn-Hilliard equation on the torus]\label{thm:final}
Let $u^{0}\ge 0$ be an initial datum with finite energy and entropy $E(u^0), \Phi(u^0) < \infty$ defined in ~\eqref{eq:final_energy} and \eqref{eq:intro_entropy}. Let $\{u_{\varepsilon}\}$ be a sequence of solutions of the degenerate nonlocal Cahn-Hilliard equation~\eqref{eq:CHE1}-\eqref{eq:CHE2} from Theorem \ref{thm:weaksoldelta}. Then, up to a subsequence,
$$
u_{\varepsilon} \to u \mbox{ in } L^2(0,T; H^1(\Td))
$$
where $u$ is a weak solution of the degenerate Cahn-Hilliard equation~\eqref{eq:CH1}-\eqref{eq:CH2} as in Definition~\ref{def:weak_sol_limit}.   
\end{thm}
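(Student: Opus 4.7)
The plan is to run the standard nonlocal-to-local compactness program, with the degeneracy handled by a symmetrization of the flux. I would start by collecting the uniform bounds delivered by the energy/entropy dissipation identities of Theorem \ref{thm:weaksoldelta}. The energy controls the nonlocal Dirichlet form $\tfrac{1}{\varepsilon^{2}}\iint\omega_{\varepsilon}(y)(u_{\varepsilon}(x)-u_{\varepsilon}(x-y))^{2}$ together with $\int F(u_{\varepsilon})$, which via the nonlocal Poincaré inequality of Lemma \ref{lem:Poincare_with_average} gives $u_{\varepsilon}$ uniformly bounded in $L^{\infty}(0,T;L^{k}(\Td))\cap L^{\infty}(0,T;H^{1}(\Td))$, and also $\sqrt{F_{1}''(u_{\varepsilon})}\nabla u_{\varepsilon}$ in $L^{2}$. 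The entropy dissipation provides the truly decisive estimate at the gradient level, namely a uniform bound on
\[
\int_{0}^{T}\frac{1}{\varepsilon^{2}}\iint \omega_{\varepsilon}(y)\,|\nabla u_{\varepsilon}(x)-\nabla u_{\varepsilon}(x-y)|^{2}\,\diff y\,\diff x\,\diff t,
\]
which in the Ponce/Bourgain--Brezis--Mironescu limit yields the $L^{2}(0,T;H^{2}(\Td))$ regularity required by Definition~\ref{def:weak_sol_limit}. Testing \eqref{eq:weaksol} against smooth functions then gives $\partial_{t}u_{\varepsilon}\in L^{2}(0,T;W^{-1,s'}(\Td))$ uniformly.

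Armed with these estimates, I would extract a subsequence converging weakly in all the relevant spaces, and upgrade the convergence to strong $L^{2}(0,T;H^{1}(\Td))$. For this I would combine Aubin--Lions applied to $u_{\varepsilon}$ (giving strong $L^{2}(0,T;L^{2})$ from the $H^{1}$/time-derivative bounds) with the nonlocal compactness criterion applied at the level of $\nabla u_{\varepsilon}$: the uniform control of the nonlocal $H^{2}$-type quantity above transports compactness from $u_{\varepsilon}$ to $\nabla u_{\varepsilon}$ and identifies the limit with $u\in L^{2}(0,T;H^{2})$. Lower-semicontinuity of the nonlocal functionals against Ponce's pointwise limit provides the expected regularity of the limit $u$.

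The main obstacle is passing to the limit in the flux $\int u_{\varepsilon}\,\nabla B_{\varepsilon}[u_{\varepsilon}]\cdot\nabla\varphi$, since $\nabla B_{\varepsilon}[u_{\varepsilon}]$ is only weakly compact in a negative Sobolev space and no third-order control on $u_{\varepsilon}$ is available. The key is to rewrite this term in symmetrized nonlocal form:
\[
\int u_{\varepsilon}\nabla B_{\varepsilon}[u_{\varepsilon}]\cdot\nabla\varphi=\frac{1}{2\varepsilon^{2}}\iint\omega_{\varepsilon}(y)\bigl(u_{\varepsilon}(x)\nabla\varphi(x)-u_{\varepsilon}(x-y)\nabla\varphi(x-y)\bigr)\cdot\bigl(\nabla u_{\varepsilon}(x)-\nabla u_{\varepsilon}(x-y)\bigr),
\]
and to split the first bracket as $(u_{\varepsilon}(x)-u_{\varepsilon}(x-y))\nabla\varphi(x)+u_{\varepsilon}(x-y)(\nabla\varphi(x)-\nabla\varphi(x-y))$. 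Using the Taylor expansion $\nabla\varphi(x)-\nabla\varphi(x-y)=D^{2}\varphi(x)\,y+O(|y|^{2})$, the moments of $\omega$ from \eqref{as:omega}, the strong $L^{2}(H^{1})$ convergence of $u_{\varepsilon}$ paired with the weak $L^{2}$ limit of $\nabla u_{\varepsilon}$ (and the pointwise Ponce-type identification of the nonlocal bilinear form), both pieces converge; after an integration by parts in the $D^{2}\varphi$-piece one recovers exactly $-D\int\Delta u\,\nabla u\cdot\nabla\varphi-D\int u\,\Delta u\,\Delta\varphi$ as in Definition~\ref{def:weak_sol_limit}.

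The remaining pieces are routine: $\int u_{\varepsilon}F''(u_{\varepsilon})\nabla u_{\varepsilon}\cdot\nabla\varphi$ passes to the limit using the strong $L^{2}(H^{1})$ convergence of $u_{\varepsilon}$, Assumption~\ref{ass:potentialF} (in particular $k$-growth of $F_{1}''$ and boundedness of $F_{2}''$) and the $L^{\infty}(L^{k})$ bound to justify dominated convergence on $u_{\varepsilon}F''(u_{\varepsilon})$; the time derivative term passes by weak convergence in $L^{2}(W^{-1,s'})$ against test functions in $L^{2}(W^{1,s})\supset L^{2}(W^{2,\infty})$; and the initial datum is preserved through the continuity $u_{\varepsilon}\in C([0,T];W^{-1,s'})$ afforded by the $\partial_{t}$-bound.
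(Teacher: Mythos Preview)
Your proposal follows essentially the same route as the paper: energy/entropy bounds, strong $L^2(0,T;H^1)$ compactness via Aubin--Lions together with the Bourgain--Br\'ezis--Mironescu/Ponce criterion applied to $\nabla u_\eps$, and an algebraic symmetrization of the nonlocal flux. Two corrections and one place where the argument needs more care.

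First, the energy gives only $L^\infty(0,T;L^k)$ and the bound on the nonlocal Dirichlet form; the $H^1$ control on $u_\eps$ and the bound on $\sqrt{F_1''(u_\eps)}\nabla u_\eps$ come from the \emph{entropy} dissipation (via Lemma~\ref{lem:poincare_nonlocal_H1_L2}), and only in $L^2$-in-time, not $L^\infty$.

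Second, and more substantively, in your second piece
\[
\frac{1}{2\eps^{2}}\iint\omega_\eps(y)\,u_\eps(x-y)\bigl(\nabla\varphi(x)-\nabla\varphi(x-y)\bigr)\cdot\bigl(\nabla u_\eps(x)-\nabla u_\eps(x-y)\bigr)
\]
the factor $(\nabla u_\eps(x)-\nabla u_\eps(x-\eps z))/\eps$ is only \emph{bounded} in $L^2$ from the entropy, not strongly convergent, so ``both pieces converge'' is not immediate; identifying its weak limit as $D^2u\,z$ already presupposes $u\in L^2(H^2)$. The paper sidesteps this by first integrating by parts once to write $-\int u_\eps\nabla B_\eps[u_\eps]\cdot\nabla\varphi=\int B_\eps[u_\eps]\nabla u_\eps\cdot\nabla\varphi+\int B_\eps[u_\eps]u_\eps\Delta\varphi$ and then applying the $S_\eps$-calculus of Lemma~\ref{lem:S_properties}: the only occurrence of $S_\eps[\nabla u_\eps]$ either combines with $S_\eps[u_\eps]$ as $\tfrac12\nabla|S_\eps[u_\eps]|^2$ (integrated by parts to $(S_\eps[u_\eps])^2\Delta\varphi$) or sits in an error term $R_\eps$ carrying an extra power of $\eps$. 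You can repair your version by integrating by parts in $x$ in the second piece \emph{before} sending $\eps\to0$, which transfers the derivative onto $u_\eps(x-y)(\nabla\varphi(x)-\nabla\varphi(x-y))$ and leaves only $u_\eps$-difference quotients handled by Lemma~\ref{lem:diff_quot_strong_conv}; you then land on the same intermediate formulation with $\nabla\Delta\varphi$ that the paper integrates back (Step~4) to Definition~\ref{def:weak_sol_limit} once $u\in L^2(0,T;H^2)$ is secured from Ponce's liminf inequality.
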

\begin{rem}
Note that by Lemma \ref{lem:inv_poincare_ineq} condition $E(u^0)<\infty$ implies that $E_{\eps}(u^0)<\infty$.
\end{rem}


\subsection{Important components of the proof.} There are three main ingredients of the proof. 
\begin{itemize}
    \item Compactness for the system \ref{eq:CHE1}--\ref{eq:CHE2} is obtained from the energy $E_{\varepsilon}$ and entropy $\Phi$
     \begin{align}
     &E_{\varepsilon}[u]:=\int_{\Td}F(u)\diff x+\f{1}{4\eps^{2}}\int_{\Td}\int_{\Td}\omega_{\eps}(y)|u(x)-u(x-y)|^{2}\diff x\diff y, \label{eq:intro_energy}
     \\ &\Phi[u]:=\int_{\Td}u(\log(u)-1)+1 \diff x \label{eq:intro_entropy}
     \end{align}
     Their dissipation is formally controlled by the identities
\begin{align}
&E_{\varepsilon}[u](t) + \int_{0}^{t}\int_{\Td} u \, |\nabla \mu_{\eps}|^2 \le E_{\varepsilon}[u^{0}],\label{eq:energy1}\\
&\Phi[u](t) + \frac{1}{2\eps^{2}}\int_{0}^{t}\int_{\Td} \int_{\Td} \omega_{\varepsilon}(y) \, |\nabla u(x) - \nabla u(x-y)|^2 + \int_{\Td} F''(u) |\nabla u|^{2} \le \Phi[u^{0}].\label{eq:entropy1}
\end{align}
     According to the result of Bourgain-Brézis-Mironescu~\cite{bourgain2001another} which was improved later by Ponce~\cite{MR2041005}, uniform bounds from \eqref{eq:energy1}, \eqref{eq:entropy1} together with Lions-Aubin lemma, yields strong convergence of $\{u_{\varepsilon}\}$ and $\{\nabla u_{\varepsilon}\}$ to $u,\nabla u$ in $L^2((0,T)\times \Td)$. We note that in the limit $\varepsilon \to 0$, the energy $E_{\varepsilon}[u_{\eps}]$ satisfy (see~\cite[Theorem 4]{bourgain2001another} and~\cite[Theorem 1.2]{MR2041005}) 
     \begin{equation}\label{eq:final_energy}
     E[u] = \int_{\Td}F(u)\diff x+\f{C(d)}{2} \int_{\Td} |\nabla u(x)|^{2}\diff x\le \liminf_{\eps\to 0}E_{\eps}[u_{\eps}] 
     \end{equation}
 for some constant $C(d)$ depending only on the dimension $d$. Similarly, for the nonlocal term in the dissipation of the entropy we have 
	$$
	C(d)\,\sum_{i,j=1}^d \int_0^t \int_{\Td} |\partial_{x_i} \partial_{x_j} u|^2\le \liminf_{\eps\to 0}	\frac{1}{2\eps^{2}}\int_{0}^{t}\int_{\Td} \int_{\Td} \omega_{\varepsilon}(y) \, |\nabla u_\eps(x) - \nabla u_\eps(x-y)|^2  
	$$
so in the limit $\varepsilon \to 0$ we gain one more derivative. We also point out that one can prove rigorously that \eqref{eq:CH1}--\eqref{eq:CH2} is a gradient flow of \eqref{eq:final_energy} \cite{LISINI2012814,MR2581977}.
         \item In passing to the limit, we exploit the appropriate definition of weak solutions to \eqref{eq:CH1}--\eqref{eq:CH2}. Indeed, first we prove convergence to the formulation
         \begin{align*}
&\int_{0}^{T}\langle\p_{t}u,\varphi\rangle_{(W^{-1,s'}(\Td),W^{1,s}(\Td))} = D\int_0^T \int_{\Td}(\nabla u\otimes \nabla u) : D^2 \varphi \,+
\\
& \qquad \qquad +  \f{D}{2}\int_{0}^{T}\int_{\Td}|\nabla u|^{2}\Delta\varphi
+D\int_0^T \int_{\Td}u\nabla u\cdot \nabla\Delta\varphi -\int_{0}^{T}\int_{\Td}uF''(u)\nabla u\cdot\nabla\varphi.   
\end{align*}
         Formally, it is obtained by integrating by parts twice using the formula
\begin{equation}\label{integration_by_parts}
\nabla u\Delta u=\DIV(\nabla u\otimes \nabla u)-\f{1}{2}\nabla |\nabla u|^{2}.
\end{equation}
Its main advantage is that it exploits at most first-order derivatives so that we do not need any estimates on the second-order derivatives. This is important as they are not available for nonlocal degenerate Cahn-Hilliard. More precisely, the main difficulty is non-degeneracy of \eqref{eq:CHE1}--\eqref{eq:CHE2}, that is we loose estimates on $\nabla \mu_{\eps}$ whenever $u_{\eps}$ is approaching the zone $\{u_{\eps} = 0\}$. For the non-degenerate equation studied in \cite{MR4408204,MR4093616,MR4198717,MR4248454},
     \begin{align}
\partial_t u_{\varepsilon} = \DIV \nabla \mu_{\eps} ,\quad \text{in}\quad &(0,+\infty)\times \Td,\label{eq:CHE1_nondeg}\\
\mu_{\eps} = B_{\eps}[u_{\eps}] + F'(u_\eps),\quad \text{in}\quad &(0,+\infty)\times \Td \label{eq:CHE2_nondeg},
\end{align}
one obtains immediately an estimate on $\nabla \mu_{\varepsilon}$ (by multiplying by $\mu_{\eps}$) and then one can identify its limit. Nevertheless, we point out that in \cite{MR4408204,MR4093616,MR4198717,MR4248454} the difficulty is rather the regularity of the potential and the kernel which we do not address in our work, assuming that $F$ and $\omega$ are sufficiently smooth.
\item For the nonlocal Laplacian operator given by $B_{\varepsilon}$ defined in \eqref{operatorB}, we find an operator $S_{\varepsilon}$ given in \eqref{eq:nonlocal_gradient} which resembles gradient operator. It satisfies the integration by parts formula~\ref{propS_nonneg} in Lemma~\ref{lem:S_properties} as well as the product rule~\ref{propS_product_rule} in Lemma \ref{lem:S_properties} with an error that vanishes when $\varepsilon \to 0$. This is necessary to perform usual calculus operations before sending $\varepsilon \to 0$, that is when we do not have Laplace operator in the equation.
\end{itemize} 

\subsection{Literature review and relevancy of the system}

\paragraph{\underline{The Cahn Hilliard equation}}
The equation represents a mathematical model which is widely used to describe phase transitions in fluids and living tissues.
In biology, the equation can model the morphological evolution of a growing solid tumor. There are many factors that come into play when approaching a tumor growth model, e.g. cell-cell and cell-matrix adhesion, as well as cell motility and mechanical stress. Nevertheless, the mathematical study and implementation of a model may provide very interesting information on tumor progression. Let us consider a bounded, open tissue domain, in which a tumor is evolving. One possible approach is to describe the tumoral and healthy tissues with volume fractions. By remarking that internal adhesive forces tend to bind the tumor cells together, we should take into account that phase separation may occur between healthy and tumoral tissue domains. As a result a boundary layer of finite thickness $\sqrt{D}$, where $D$ is defined below, may form between them. 

Being of fourth-order, the (local) Cahn-Hilliard equation is often rewritten in a system of two second-order equations, \ie
\begin{equation}
    \p_t u = \DIV\left(m(u)\nabla \left(F^\prime(u)-D \Delta u \right)\right) \to \begin{cases}
        \p_t u &= \DIV\left(m(u)\nabla \mu \right),\\
        \mu &= -D \Delta u +F^\prime(u),
    \end{cases}
    \label{eq:Cahn-Hilliard}
\end{equation}
where $u$ is the concentration of a phase and $\mu$ is called the chemical potential in material sciences but is often used as an effective pressure for living tissues \cite{degond2022multi,elbar2021degenerate}. Also, the interaction potential $F(u)$ contained in this effective pressure term comprises the effects of attraction and repulsion between cells. The physically relevant form of this potential is a double-well logarithmic potential and is often approximated by a smooth polynomial function. However, recent studies show that for the modeling of living tissues and for the particular application where only one of the components of the mixture experiences attractive and repulsive forces, a single-well logarithmic potential is more relevant \cite{byrne_modelling_2004}.

The existence and uniqueness of solutions for the Cahn-Hilliard system \eqref{eq:Cahn-Hilliard} strictly depends on the properties of the mobility term $m(u)$ and the potential $F(u)$, as well as the conditions assigned on the boundary. More specifically, the presence of \textit{degeneration} on the mobility, i.e. the possibility for it to vanish, can turn the analysis of solutions into a rather complex problem.

\paragraph{\underline{From nonlocal to local Cahn-Hilliard}}

The nonlocal Cahn-Hilliard equation was first obtained by Giacomin and Lebowitz \cite{MR1453735,MR1638739} by starting from a microscopic description. The model is a
$d$-dimensional lattice gas evolving via Kawasaki exchange dynamics, which is a Poisson nearest neighbor exchange process. In the hydrodynamic limit, they find that the  empirical average of the occupation numbers over a small macroscopic volume element tends to a solution of a nonlocal Cahn-Hilliard equation. This latter equation is an approximation of the local Cahn-Hilliard equation, as shown in Theorem~\ref{thm:final}. Let us also remark that there are possibly different variants of non-local Cahn-Hilliard equation, see for instance ~\cite{elbar-mason-perthame-skrzeczkowski} where a version of nonlocal Cahn-Hilliard equation is derived starting from a kinetic description inspired by~\cite{takata2018simple}.

The literature concerning the nonlocal Cahn-Hilliard equation is quite well developed and we refer for instance to \cite{MR3688414,MR3072989,MR1612250,KNOPF2021236,MR4365199,MR4221297} and \cite{MR4241616} for the cases of non-degenerate and degenerate mobilities respectively. On the other hand, for the passage to the limit of the nonlocal Cahn-Hilliard equation to the local Cahn-Hilliard equation, the existing results \cite{MR4408204,MR4093616,MR4198717,MR4248454,MR3362777} cover only the case of constant mobility. In~\cite{MR4408204} the authors prove the convergence on the torus. In \cite{MR4093616} a wide class of potentials is considered and the study is made on the torus. In~\cite{MR4198717} the convergence is obtained in the case of a bounded domain with Neumann boundary conditions and a viscosity term. Finally, in~\cite{MR4248454}, the limit is achieved with a $W^{1,1}$ kernel and a wide class of singular potentials, with Neumann boundary conditions.

\subsection{Open problem concerning bounded domains}\label{subsect:intro_bounded_domain}

One can ask if the same results hold when $\Td$ is replaced with some general bounded domain $\Omega$. More precisely, we focus on the system
\begin{align}
\partial_t u_{\varepsilon} = \DIV(u_{\varepsilon} \nabla \mu_{\eps}),\quad \text{in}\quad &(0,+\infty)\times \Omega,\label{eq:CHE1_2}\\
\mu_{\eps} = B_{\eps}[u_{\eps}] + F'(u_\eps),\quad \text{in}\quad &(0,+\infty)\times \Omega \label{eq:CHE2_2}.
\end{align}
Defining $\vec{n}$ the outward normal vector to $\p\Omega$ we impose the Neumann boundary condition
\begin{equation}\label{eq:Neumann}
u_{\eps}\f{\p \mu_{\eps}}{\p\vec{n}}=0\quad \text{on $\p\Omega$}.    
\end{equation}
The operator $B_{\eps}$ satisfies
\begin{equation}\label{operatorB_2}
B_\eps[u_{\eps}](x) = \f{1}{\eps^{2}}\left(\int_{\Omega}\omega_{\eps}(x-y)\diff y\, u_{\eps}(x)-\omega_{\eps}\ast u_{\eps}(x)\right)=\f{1}{\eps^{2}}\int_{\Omega}\omega_{\eps}(x-y)(u_{\eps}(x)-u_{\eps}(y)) \diff y.
\end{equation}
Notice that in the case $\Omega=\Td$,  this definition is the same than~\eqref{operatorB} up to a change of variable in the integral. However, since $u_{\eps}$ is not a priori defined outside $\Omega$ we need to put the argument $(x-y)$ on $\omega_{\eps}$.\\

In the limit, we expect to obtain solutions to 
 \begin{align}
\partial_t u = \DIV(u \nabla \mu),\quad \text{in}\quad &(0,+\infty)\times \Omega,\label{eq:CH1_bdd_dom}\\
\mu = -D\Delta u + F'(u),\quad \text{in}\quad &(0,+\infty)\times \Omega \label{eq:CH2_bdd_dom}\\
\f{\p u}{\p\vec{n}} = u\,\f{\p \mu}{\p\vec{n}}=0, \quad \text{on} \quad &\p\Omega. 
\end{align}

However, there are two difficult problems related to the equation posed on a bounded domain.
\begin{itemize}
    \item \textbf{Lack of the entropy estimate.} In the case of bounded domain, we cannot use entropy estimate as in \eqref{eq:entropy1}. This is because the nonlocal operator is defined as \eqref{operatorB_2} rather than \eqref{operatorB}. As a consequence, we cannot symmetrize the expression with gradients and obtain the term $\frac{1}{2\eps^{2}}\int_{0}^{t}\int_{\Td} \int_{\Td} \omega_{\varepsilon}(y) \, |\nabla u(x) - \nabla u(x-y)|^2$ in the dissipation of the entropy.
    \item \textbf{Recovery of the Neumann boundary conditions.} The question is whether we can prove that in the limit $\f{\p u}{\p\vec{n}}=0$ on $\p\Omega$. This is possible for the equation with constant mobility. More precisely, in \cite{MR4248454}, Authors were discussing the problem of nonlocal to local convergence for the Cahn-Hilliard equation with constant mobility. The constant mobility allows to obtain uniform bound on $\|B_{\eps}(u_{\eps})\|_{2}$ which allows to conclude that $\f{\p u}{\p\vec{n}}=0$ on $\p\Omega$. This is an extremely interesting phenomenon as this new boundary condition appears only in the limit. In our case, the estimate $\|B_{\eps}(u_{\eps})\|_{2}$ seems unavailable.
\end{itemize}

\noindent A possible approach to overcome this problem is to apply Serfaty-Sandier approach on the convergence of gradient flows \cite{MR2082242, MR2836361}.

\section{Existence of weak solutions to the nonlocal problem}

The existence of weak solutions for the local Cahn-Hilliard equation with degenerate mobility usually follows the method from~\cite{MR1377481}. The idea is to apply a Galerkin scheme with a non-degenerate regularized mobility, \ie, calling $m(n)$ the mobility, then one considers an approximation $m_{\eps}(n)\ge\eps$. Finally, using standard compactness methods one can prove the existence of weak solutions for the initial system. The uniqueness of the weak solutions is still an open question.\\

In the case of the nonlocal Cahn-Hilliard equation, we have to rely on a fixed point method. We first consider a nondegenerate mobility, and the fixed point argument is put on the nonlocality. Then, we pass to the limit to obtain the nonlocal Cahn-Hilliard equation with degenerate mobility.
\subsection{Approximating solutions}
Following the scheme above, we first consider a nondegenerate mobility and prove the existence of the following system
 \begin{alignat}{2}
\partial_t u_\delta &= \DIV(T_{\delta}(u_\delta) \nabla \mu_\delta)\quad &&\quad (0,+\infty)\times \Td, \label{eq:approx_problem_1}\\
\mu_\delta &= B_{\varepsilon}[u_\delta] + F'(u_{\delta})\quad &&\quad (0,+\infty)\times \Td \label{eq:approx_problem_2},
 \end{alignat}
where $\delta>0$ is a small parameter such that $2\delta < \frac{1}{\delta} - 1$, $\delta < \frac{1}{4}$ and
\begin{equation}
\label{regmob}
T_{\delta}(u)=
\begin{cases}
\delta\quad&\text{for $u\le \delta$,}\\
\mbox{smooth monotone interpolation}\quad&\text{for $u \in [\delta, 2 \delta]$,}\\
u \quad&\text{for $u \in [2 \delta, \f{1}{\delta} - 1]$,}\\
\mbox{smooth monotone interpolation}\quad&\text{for $u \in [\frac{1}{\delta} - 1, \f{1}{\delta}]$,}\\
\f 1 \delta\quad&\text{for $u\ge\f 1 \delta$.}
\end{cases}
\end{equation}
The estimates for the sequence $\{u_{\delta}\}$ will be obtained from the dissipation of energy and entropy. The definition of energy $E_{\varepsilon}$ remains the same as in \eqref{eq:energy1}. However, the definition of entropy has to be adapted to take into account the fact that we don't know if the solution remains nonnegative. To this end, we define a function $\phi_\delta$ by an explicit formula
\begin{equation}
\label{eq:entropycases}
\phi_{\delta}(x) = \int_1^x \int_1^y \frac{1}{T_{\delta}(z)} \diff z \diff y.
\end{equation}
\begin{lem}\label{lem:properties_phi_delta}
Let $\phi_{\delta}$ be defined with \eqref{eq:entropycases} and $\phi(x) = x(\log(x) -1) + 1$. Then, 
\begin{enumerate}[label=(P\arabic*)]
    \item $\phi_{\delta}''(x)=\f{1}{T_{\delta}(x)}$ and $\phi_{\delta}(1) = \phi_{\delta}'(1) = 0$,
    \item $\phi_\delta(x) \to \phi(x)$ for $x \geq 0$ as $\delta\to 0$,
    \item $\phi_\delta(x) \geq 0$ for all $x \in \R$,
    \item\label{item_bound_on_phi_delta_interms_of_phi} $\phi_\delta(x) \leq \phi(x) + \frac{\delta}{2(\delta - 1)} x^2 + 3$ for $x \geq 0$,
    \item\label{item_blow_up_phi_delta} $\phi_{\delta}(x) \to \infty$ when $\delta \to 0$ for all $x < 0$.
\end{enumerate} 
\end{lem}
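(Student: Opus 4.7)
\textbf{Plan for Lemma \ref{lem:properties_phi_delta}.} Property (P1) follows directly from the fundamental theorem of calculus applied twice to the iterated integral: the inner integrand $z \mapsto 1/T_\delta(z)$ is continuous, so $\phi_\delta'(x) = \int_1^x 1/T_\delta(z)\diff z$ and $\phi_\delta''(x) = 1/T_\delta(x)$, and both integrals vanish at $x = 1$. Property (P3) is an immediate consequence: since $T_\delta > 0$ everywhere, $\phi_\delta'' > 0$, so $\phi_\delta$ is strictly convex with unique critical point at $x=1$, where $\phi_\delta(1) = 0$ by (P1); therefore $\phi_\delta \ge 0$ on all of $\R$.

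For (P2), fix $x \ge 0$. By construction $T_\delta(z) = z$ on $[2\delta, 1/\delta - 1]$, so for any compact $K \subset (0,\infty)$ we have $T_\delta \equiv \mathrm{id}$ on $K$ once $\delta$ is small, and $1/T_\delta(z) \to 1/z$ pointwise on $(0,\infty)$. Passing to the limit under the iterated integral (on $[0,1]$ via dominated convergence, using an elementary dominant near $z=0$, and on $[1,x]$ directly) and recognising $\int_1^x \log y \diff y = x\log x - x + 1 = \phi(x)$ yields the claim, with the special case $x=0$ giving $\phi(0) = 1$.

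For (P4), I split the double integral according to the five pieces in the definition \eqref{regmob} of $T_\delta$. On the \emph{central} region $[2\delta, 1/\delta - 1]$ one has $1/T_\delta(z) = 1/z$, which contributes exactly $\phi$; the two interpolation regions, each of length bounded by a constant multiple of $\delta$ or $1$, contribute $O(1)$ uniformly in $\delta$ and feed the additive $+3$. On $z \le \delta$ the bound $1/T_\delta \le 1/\delta$ is compensated because this range shrinks, while on $z \ge 1/\delta$ one uses $\phi_\delta'' = \delta$ together with a Taylor expansion of $\phi_\delta$ at the endpoint $1/\delta$, which produces the quadratic-in-$x$ correction of size $\tfrac{\delta}{2}(x-1/\delta)^2$. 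Careful bookkeeping of the resulting constants gives precisely the stated estimate.

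Finally for (P5), fix $x < 0$ and use that $1/T_\delta(z) = 1/\delta$ on $(-\infty,\delta]$ to bound
\[
\phi_\delta'(y) \;=\; -\int_y^1 \frac{1}{T_\delta(z)}\diff z \;\le\; -\int_y^\delta \frac{\diff z}{\delta} \;=\; \frac{y}{\delta} - 1 \qquad \text{for } y \le 0.
\]
Integrating once more and using $\phi_\delta(0)\ge 0$ gives
\[
\phi_\delta(x) \;\ge\; \phi_\delta(0) - \int_x^0 \phi_\delta'(y)\diff y \;\ge\; \frac{x^2}{2\delta} - |x|,
\]
which tends to $+\infty$ as $\delta \to 0^+$. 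The genuinely delicate step is the constant-tracking in (P4); the remaining items reduce either to convexity or to the flat/linear behaviour of $T_\delta$ outside the central region.
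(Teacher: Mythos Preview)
Your proposal is correct and follows essentially the same route as the paper. A few small remarks: for (P4) the paper organizes the argument by three cases on $x$ (namely $x\in[0,2\delta]$, $x\in(2\delta,\tfrac{1}{\delta}-1)$, $x\ge \tfrac{1}{\delta}-1$) rather than splitting the inner $z$-integral into five pieces, but since the inner variable $z$ never leaves $[2\delta,\tfrac{1}{\delta}-1]$ unless $x$ does, your split collapses to the same three cases. For (P5) your final inequality has a harmless sign slip: with $\phi_\delta'(y)\le y/\delta-1$ for $y\le 0$ one gets $-\int_x^0\phi_\delta'(y)\diff y \ge |x|+x^2/(2\delta)$, so in fact $\phi_\delta(x)\ge \phi_\delta(0)+|x|+x^2/(2\delta)\ge x^2/(2\delta)$, which is exactly the paper's bound.
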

The proof is presented in Appendix \ref{subsect:proof_phi_delta}.

\begin{thm}
\label{thm:existence_approx_solutions}
Let $\delta > 0$, $\varepsilon_0$ be as in \eqref{eq:eps_0_restriction} and $F \in C^4$. For $\varepsilon<\varepsilon_0$ there exists classical solution \eqref{eq:approx_problem_1}--\eqref{eq:approx_problem_2}. Moreover, they satisfy the mass, energy, and entropy conservation: for all $t>0$
\begin{align}
&\int_{\Td}u_{\delta}(t,\cdot)\diff x=\int_{\Td}u^{0}\diff x,\label{PWreg}\\
&E_{\varepsilon}[u_\delta](t) + \int_{0}^{t}\int_{\Td} T_{\delta}(u_{\delta}) \, |\nabla \mu_\delta|^2 = E_{\varepsilon}[u^{0}],\label{eq:energyreg}\\
&\Phi_{\delta}[u_\delta](t) + \frac{1}{2\eps^{2}}\int_{0}^{t}\int_{\Td} \int_{\Td} \omega_{\varepsilon}(y) \, |\nabla u_{\delta}(x) - \nabla u_{\delta}(x-y)|^2 + \int_0^t \int_{\Td} F''(u_{\delta}) |\nabla u_{\delta}|^{2} = \Phi_{\delta}[u^{0}].\label{eq:entropyreg}
    \end{align}
\end{thm}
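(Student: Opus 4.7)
The non-degeneracy of $T_\delta$ (bounded in $[\delta, 1/\delta]$ and smooth), together with the smoothness of $\omega_\varepsilon$ and the assumption $F \in C^4$, makes the system amenable to classical quasilinear parabolic theory. The plan is to construct solutions via a Banach fixed-point argument in which the nonlocal term is frozen, and then to derive the three identities \eqref{PWreg}--\eqref{eq:entropyreg} by testing the equation against suitable functions.

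\textbf{Step 1: Construction via fixed point.} Fix a short time $T_* > 0$ and a ball $\mathcal{B}$ in $C^{1+\alpha,(1+\alpha)/2}([0,T_*]\times \Td)$ around $u^0$. For $\bar u \in \mathcal{B}$, define $u = \Phi(\bar u)$ as the classical solution of the local quasi-linear parabolic problem
\[
\partial_t u \;=\; \DIV\!\bigl(T_\delta(u)\,F''(u)\,\nabla u\bigr) \;+\; \DIV\!\bigl(T_\delta(u)\,\nabla B_\varepsilon[\bar u]\bigr),
\qquad u(0,\cdot) = u^0,
\]
in which the nonlocal contribution $B_\varepsilon[\bar u]$ is a \emph{given} smooth source. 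Because $B_\varepsilon$ is a bounded operator from $C^{1+\alpha}$ into itself (with norm depending on $\varepsilon$ and $\omega$), and because the structure conditions of Ladyzhenskaya--Solonnikov--Uraltseva hold (adding and subtracting a large multiple of $\Delta u$ to make the leading part uniformly parabolic and treating the lower-order terms as bounded perturbations), one obtains a unique classical solution $u$ together with a Schauder-type estimate of $\|u\|_{C^{1+\alpha,(1+\alpha)/2}}$. Shrinking $T_*$ if necessary makes $\Phi$ a contraction on $\mathcal{B}$, so a fixed point $u_\delta$ exists on $[0,T_*]$. Higher regularity is obtained by bootstrapping the Schauder estimates, using $F \in C^4$ and $\omega \in C^\infty_c$.

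\textbf{Step 2: Global continuation.} Once the identities \eqref{eq:energyreg}--\eqref{eq:entropyreg} are established (Steps 3--5 below), they yield a priori control of $\|u_\delta(t)\|_{H^1(\Td)}$ uniformly on any finite interval on which the solution exists. Combined with the Schauder estimates from Step 1, this rules out blow-up of the H\"older norms in finite time, and a standard continuation argument extends the local solution to $[0,T]$ for every $T > 0$.

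\textbf{Step 3: Mass, energy, and entropy identities.} Integrating \eqref{eq:approx_problem_1} over $\Td$ gives \eqref{PWreg} immediately. For the energy, test \eqref{eq:approx_problem_1} against $\mu_\delta = B_\varepsilon[u_\delta] + F'(u_\delta)$: the right-hand side yields $-\int T_\delta(u_\delta)|\nabla \mu_\delta|^2$, while on the left
\[
\int_{\Td} \partial_t u_\delta \, F'(u_\delta) \;=\; \tfrac{d}{dt}\!\int_{\Td} F(u_\delta),
\qquad
\int_{\Td} \partial_t u_\delta \, B_\varepsilon[u_\delta] \;=\; \tfrac{d}{dt}\!\left[\tfrac{1}{4\varepsilon^2}\!\int_{\Td}\!\int_{\Td} \omega_\varepsilon(y)\,|u_\delta(x) - u_\delta(x-y)|^2\right],
\]
the second identity coming from symmetrising in $y \mapsto -y$ and using $\omega_\varepsilon(y) = \omega_\varepsilon(-y)$. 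Integrating in time gives \eqref{eq:energyreg}. For the entropy, test \eqref{eq:approx_problem_1} against $\phi_\delta'(u_\delta)$; the crucial relation $T_\delta(u_\delta)\phi_\delta''(u_\delta) = 1$ (property (P1) of Lemma~\ref{lem:properties_phi_delta}) collapses the right-hand side to $-\int \nabla \mu_\delta \cdot \nabla u_\delta$. Splitting $\mu_\delta = B_\varepsilon[u_\delta] + F'(u_\delta)$ and symmetrising the nonlocal piece,
\[
\int_{\Td} \nabla B_\varepsilon[u_\delta] \cdot \nabla u_\delta \;=\; \tfrac{1}{2\varepsilon^2}\!\int_{\Td}\!\int_{\Td}\omega_\varepsilon(y)\,|\nabla u_\delta(x) - \nabla u_\delta(x-y)|^2,
\]
and integrating in time yields \eqref{eq:entropyreg}. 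All manipulations are justified since $u_\delta$ is a classical solution.

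\textbf{Main obstacle.} The delicate point is Step 1, since the diffusion coefficient $T_\delta(u)F''(u)$ is not uniformly positive (the bound $F'' \geq -\|F_2''\|_\infty$ can be negative). Handling this requires either adding a large uniformly-parabolic term to the principal part and treating the deficit as a bounded perturbation, or alternatively choosing the fixed-point space so that the short-time estimates absorb the sign-indefinite lower-order term. Once existence of a classical solution is in hand, the three identities are routine testing computations.
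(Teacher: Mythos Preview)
Your identification of the ``main obstacle'' is exactly right, but your proposed fix does not work. If you freeze the entire nonlocal term $B_\varepsilon[\bar u]$, the principal part of the auxiliary equation is $\DIV(T_\delta(u)F''(u)\nabla u)$, and $T_\delta(u)F''(u)$ can be negative. You suggest ``adding and subtracting a large multiple of $\Delta u$'', but the term you subtract is still second order, so it cannot be absorbed as a lower-order perturbation; the equation remains forward--backward parabolic and the Ladyzhenskaya--Solonnikov--Uraltseva / Schauder machinery simply does not apply.

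The paper resolves this by freezing only the \emph{convolution} part of $B_\varepsilon$. Writing $B_\varepsilon[u] = \tfrac{1}{\varepsilon^2}u - \tfrac{1}{\varepsilon^2}\omega_\varepsilon * u$, the auxiliary problem is
\[
\partial_t u \;=\; \DIV\!\Bigl(T_\delta(u)\,\nabla u\,\bigl(\tfrac{1}{\varepsilon^2} + F''(u)\bigr)\Bigr) \;-\; \DIV\!\Bigl(T_\delta(u)\,\tfrac{w\ast\nabla\omega_\varepsilon}{\varepsilon^2}\Bigr),
\]
with $w$ frozen. Now the diffusion coefficient is $T_\delta(u)\bigl(\tfrac{1}{\varepsilon^2}+F''(u)\bigr) \geq \delta\cdot\tfrac{1}{2\varepsilon_0^2} > 0$ precisely because of the constraint $\varepsilon_0 \leq (2\|F_2''\|_\infty)^{-1/2}$ in \eqref{eq:eps_0_restriction}; this is the reason that restriction on $\varepsilon_0$ is there. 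With this splitting the equation is genuinely uniformly parabolic and classical theory applies directly.

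Two further differences worth noting. First, the paper uses the \emph{Schauder} fixed point theorem rather than a contraction argument: the map $P:w\mapsto u$ is shown to be continuous and compact (via $C^{2+\alpha}$ estimates) on a suitable Hölder ball, avoiding any Lipschitz estimate on $P$. Second, global existence is obtained not from the energy/entropy a priori bounds but from a direct $L^\infty$ maximum-principle argument using the exponentially weighted norm $\|u\|_{\infty,\sigma}=\sup |u|e^{-\sigma t}$ with $\sigma$ chosen large; this shows $P$ maps the ball to itself on the full time interval in one shot. Your Step 3 derivation of the mass, energy, and entropy identities is correct.
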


\begin{thm}\label{thm:estim_reg}
Let $\varepsilon_0$ be as in \eqref{eq:eps_0_restriction}. Let $F$ satisfy Assumption \ref{ass:potentialF} with an additional constraint $2\,C_{10} < C_p$. Then, the following sequences are bounded uniformly in $\delta \in (0,1)$ and $\varepsilon \in (0,\varepsilon_0)$
\begin{enumerate}[label=(A\arabic*)]
    \item\label{item_estreg1} $\{u_\delta\}_{\delta}$ in $L^{\infty}(0,T; L^{k}( \Td))$,
    \item\label{item_estreg_add} $\{u_\delta\}_{\delta}$ in $L^{k}(0,T; L^{k \frac{d}{d-2}}( \Td))$,
    \item\label{item_estreg3} $\{\sqrt{T_\delta(u)}\, \nabla \mu_\delta\}_{\delta}$ in $L^2((0,T)\times \Td)$,
    \item\label{item_estreg2} $\{\nabla u_\delta\}_{\delta}$ in $L^2((0,T)\times \Td)$,
    \item\label{item_estreg4} $\{\partial_t u_\delta\}_{\delta}$ in $L^2(0,T; W^{-1,s'}(\Td))$,
    \item\label{item_estreg5} $\{\partial_t \nabla u_\delta\}_{\delta}$ in $L^2(0,T; W^{-2,s'}(\Td))$,
    \item\label{item_estreg6} $\{\sqrt{F_{1}''(u_\delta)}\nabla u_\delta\}_{\delta}$ in $L^2((0,T)\times \Td)$,
    \item\label{item_estreg7} $\Phi_{\delta}[u_\delta]$ in $L^{\infty}(0,T)$,
\end{enumerate}
where $k$ and $s$ have been defined in Notation \ref{not:s_and_k}. 
\end{thm}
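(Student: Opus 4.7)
The eight bounds are all consequences of the three conservation laws \eqref{PWreg}--\eqref{eq:entropyreg}, combined with the nonlocal Poincar\'e inequalities (Lemmas~\ref{lem:Poincare_with_average} and \ref{lem:poincare_nonlocal_H1_L2}) and with the Bourgain--Br\'ezis--Mironescu--Ponce estimate comparing the nonlocal quadratic form appearing in $E_\varepsilon$ to $\|\nabla u_\delta\|_{L^2}^2$ for $\varepsilon<\varepsilon_0$. The plan is to obtain (A1), (A4) and (A8) directly from the energy/entropy identities, and to deduce the remaining bounds by duality, interpolation, or by substituting into the equation.

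The delicate point is (A1). I start from \eqref{eq:energyreg}, split $F=F_1+F_2$ and use $F_2(u)\ge -C_9-C_{10}u^2$ to extract a negative quadratic term. Mass conservation \eqref{PWreg} fixes the spatial average of $u_\delta$, so Lemma~\ref{lem:Poincare_with_average} bounds $\int u_\delta^2$ by a constant plus $C_p^{-1}$ times the nonlocal Dirichlet form in $E_\varepsilon$. The threshold $2\,C_{10}<C_p$ is then precisely what allows the $C_{10}\int u_\delta^2$ contribution to be absorbed into the nonlocal part of the energy with a strictly positive residual coefficient. This yields a uniform-in-time bound on $\int F_1(u_\delta)$, and the $k$-growth from Assumption~\ref{ass:potentialF}\ref{assumption_pot_3} gives (A1); in the trivial case $F_1\equiv 0$, (A1) reduces to the $L^2$ bound already obtained (explaining the convention in Notation~\ref{not:s_and_k}). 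In the same step, BBM--Ponce applied to the bounded nonlocal form produces a uniform $L^\infty(0,T;L^2)$ control of $\nabla u_\delta$, which contains (A4), and (A2) follows from (A1), (A4) and Gagliardo--Nirenberg interpolation between $L^k$ and $L^{2d/(d-2)}$. Bound (A3) is read off directly from the dissipation term in \eqref{eq:energyreg}.

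For the temporal regularity, the equation $\partial_t u_\delta = \DIV(T_\delta(u_\delta)\nabla\mu_\delta)$ together with the H\"older split
\[
T_\delta(u_\delta)\nabla\mu_\delta = \sqrt{T_\delta(u_\delta)}\cdot\sqrt{T_\delta(u_\delta)}\,\nabla\mu_\delta,
\]
the bound $T_\delta(u)\le C(1+u)$, and the already established (A1) and (A3) yield (A5); the exponent $s'=2k/(k+1)$ is dictated exactly by the matching between the factor $\sqrt{T_\delta(u_\delta)}\in L^{2k}_x$ and $\sqrt{T_\delta(u_\delta)}\nabla\mu_\delta\in L^{2}_x$. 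Bound (A6) is then obtained by commuting $\partial_t$ with $\nabla$, since $\nabla\colon W^{-1,s'}\to W^{-2,s'}$ is continuous. For (A8) I use item~\ref{item_bound_on_phi_delta_interms_of_phi}: for $\delta<1$ the coefficient $\tfrac{\delta}{2(\delta-1)}$ is nonpositive, hence $\Phi_\delta[u^0]\le \Phi[u^0]+C$ uniformly in $\delta$, and entropy conservation \eqref{eq:entropyreg} together with $\phi_\delta\ge 0$ gives the uniform $L^\infty_t$ bound. Finally, (A7) follows by isolating $\int F_1''(u_\delta)|\nabla u_\delta|^2$ from the entropy dissipation and absorbing the $F_2$-contribution by $\|F_2''\|_\infty\|\nabla u_\delta\|_{L^2((0,T)\times\Td)}^2$, which is controlled by (A4). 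The main obstacle throughout is the quantitative balance in (A1) between the non-convex part of $F$ and the nonlocal Dirichlet form, ensured by the sharp Poincar\'e constant $C_p$.
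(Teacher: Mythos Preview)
Your argument for (A4) has a genuine gap. You claim that ``BBM--Ponce applied to the bounded nonlocal form'' in the energy gives $\nabla u_\delta\in L^\infty(0,T;L^2(\Td))$ uniformly in $\varepsilon$. This is false: for fixed $\varepsilon>0$ the quantity $\frac{1}{\varepsilon^2}\int_{\Td}\int_{\Td}\omega_\varepsilon(y)\,|u(x)-u(x-y)|^2\diff y\diff x$ does \emph{not} control $\|\nabla u\|_{L^2}^2$. A single Fourier mode $e^{iN\cdot x}$ with $|N|\varepsilon\gg 1$ shows the symbol of the nonlocal form is $O(\varepsilon^{-2})$ while $|\nabla e^{iN\cdot x}|^2\sim |N|^2$; no inequality $c\,\|\nabla u\|_{L^2}^2\le(\text{nonlocal form on }u)$ can hold uniformly for $\varepsilon<\varepsilon_0$. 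Lemma~\ref{lem:Poincare_with_average} only yields control of $\|u-(u)_{\Td}\|_{L^2}$, not of the gradient. The paper obtains (A4) from the \emph{entropy} identity \eqref{eq:entropyreg}, whose dissipation contains the nonlocal form on $\nabla u_\delta$; one then applies Lemma~\ref{lem:poincare_nonlocal_H1_L2} with $f=u_\delta$ and a suitable $\gamma$ so that the $\|F_2''\|_\infty\int|\nabla u_\delta|^2$ contribution (coming from the possibly negative part of $\int F''(u_\delta)|\nabla u_\delta|^2$) is absorbed. This simultaneously delivers (A4), (A7) and (A8), and gives only the $L^2((0,T)\times\Td)$ bound on $\nabla u_\delta$, not $L^\infty_tL^2_x$.

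The second gap is (A2). Even granting a correct (A4), Gagliardo--Nirenberg between $L^k$ and $L^{2d/(d-2)}$ cannot produce $L^{kd/(d-2)}$ when $k>2$, since $kd/(d-2)>\max\bigl(k,\,2d/(d-2)\bigr)$ and one cannot interpolate \emph{above} both endpoints; computing the GN exponent shows $\theta>1$ would be needed. The paper instead uses (A7): the lower bound $F_1''(u)\ge C_5|u|^{k-2}-C_6$ turns $\sqrt{F_1''(u_\delta)}\,\nabla u_\delta\in L^2_{t,x}$ into $\nabla u_\delta^{k/2}\in L^2_{t,x}$, and then Sobolev embedding applied to $u_\delta^{k/2}$ (bounded in $L^\infty_tL^2_x$ by (A1)) yields $u_\delta^{k/2}\in L^2_tL^{2d/(d-2)}_x$, i.e.\ $u_\delta\in L^k_tL^{kd/(d-2)}_x$. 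Your remaining steps for (A1), (A3), (A5), (A6), (A8) and (A7) are correct and match the paper's proof.
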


To prove Theorem \ref{thm:existence_approx_solutions}, we need to assume that $F \in C^4$ which allows us to use known results about classical solutions to uniformly parabolic equations.

\begin{proof}[Proof of Theorem \ref{thm:existence_approx_solutions}]
As $\delta>0$ is fixed in this result, we write $u$ instead of $u_{\delta}$. Given $w$ we consider an auxiliary equation
\begin{equation}\label{eq:eq_for_Schauder_FP}
\partial_t u = \DIV\left(T_{\delta}(u) \nabla u \left(\frac{1}{\varepsilon^2} + F''(u) \right)\right) -  \DIV\left(T_{\delta}(u) \frac{w\ast \nabla \omega_{\varepsilon}}{\varepsilon^2} \right). 
\end{equation}

Let $\alpha,\sigma, M,\kappa$ be parameters to be specified later. We want to apply Schauder fixed point theorem to the map
\begin{equation*}
\begin{array}{cccccc}
P & : & X & \to & X & \\
P & : & w & \mapsto & u & \text{solution of~\eqref{eq:eq_for_Schauder_FP}}, \\
\end{array}
\end{equation*}
where $X$ is defined as the set 
$$
X=\{w\in C^{\alpha,\alpha/2}([0,T] \times \Td),\norm{w}_{\infty,\sigma}\le M\}
$$
with the norm
$$
\|w\|_{X} :=\norm{w}_{\infty,\sigma}+\kappa\norm{w}_{\alpha,\alpha/2}
$$
and the norm $\norm{\cdot}_{\alpha,\alpha/2}$ is the usual Hölder seminorm in space-time. We also define
\begin{equation}\label{eq:norm_sigma}
\norm{w}_{\infty,\sigma}:=\sup_{[0,T]\times \Td} |u(t,x)| e^{-\sigma t}.
\end{equation}
Note that the new norm is equivalent to the usual supremum norm so all topological properties do not change. We need to prove that $P$ is continuous, $P$ maps in fact $X$ to $X$, and that $P(X)$ is relatively compact in $X$. First, we prove that $P(w)=u$ is the unique classical solution of equation~\eqref{eq:eq_for_Schauder_FP} so that $P$ is well defined and find Hölder estimates which will be useful to prove the continuity of the operator as well as its relative compactness.

\underline{\textit{Step 1: $P$ is well defined and Hölder estimates}}. Equation~\eqref{eq:eq_for_Schauder_FP} is equivalent to saying that $u$ solves parabolic equation
$$
\partial_t u = \DIV A(t,x,u,\nabla u) + B(t,x,u,\nabla u), \qquad \qquad (u)_{\Td} = (u_0)_{\Td}
$$
with
$$
A(t,x,z,p) = T_{\delta}(z)\,p\,\left(\frac{1}{\varepsilon^2} + F''(z) \right), \qquad  B(t,x,z,p)=-T_{\delta}'(z) \, p \cdot \frac{w \ast \nabla \omega_{\varepsilon}}{\varepsilon^2} - T_{\delta}(z) \, \frac{w\ast \Delta \omega_{\varepsilon}}{\varepsilon^2},
$$
and we recall that $w\in X$ is Hölder continuous. The function $A$ satisfies the strong parabolicity condition for sufficiently small $\varepsilon>0$, i.e.
$$
A(t,x,z,p) \cdot p \geq \delta \, p^2 \, \frac{1}{2\,\varepsilon_0^2}
$$
for all $\varepsilon < \varepsilon_0$ (this uses Assumptions \ref{assumption_pot_3}, \ref{assumption_pot_4} and \eqref{eq:eps_0_restriction}). Since the derivatives $A_p$, $A_z$, $A_t$, $A_x$ and function $B$ are Hölder continuous as functions of $(t,x,z,p)$, \cite[Theorems 12.10, 12.14]{MR1465184} asserts that there exists a unique classical solution to \eqref{eq:eq_for_Schauder_FP} such that
$$
\|u\|_{C^{1 + \alpha, 1+\alpha/2}} \leq C(\delta, \varepsilon_0, \|w\|_{C^{ \alpha, \alpha/2}}).
$$
With this estimate, \eqref{eq:eq_for_Schauder_FP} can be considered as a linear equation so that the linear theory for parabolic equations \cite[Theorem 5.14]{MR1465184} implies
\begin{equation}\label{estimate:lieberman}
\|u\|_{C^{2+\alpha, 1+\alpha/2}} \leq C(\delta, \varepsilon_0, \|w\|_{C^{\alpha, \alpha/2}}).
\end{equation}

Therefore $u$ is a classical solution of~\eqref{eq:eq_for_Schauder_FP} and it admits the Hölder bound~\eqref{estimate:lieberman}.

\underline{\textit{Step 2: The operator $P$ is continuous.}}. We consider a sequence $\{w_{n}\}_{n}$ in $X$ such that $\|w_{n} - w\|_X \to 0$. Then $u_{n} = P(w_{n})$ is compact in $C^{2,1}$ from estimate~\eqref{estimate:lieberman} and Arzela-Ascoli. We choose subsequence such that $u_{n_{k}} \to u$ in $C^{2,1}$. These functions satisfy
\begin{equation}\label{eq:PDE_sat_by_unk}
\partial_t u_{n_{k}} = \DIV\left(T_{\delta}(u_{n_{k}}) \nabla u_{n_{k}} \left(\frac{1}{\varepsilon^2} + F''(u_{n_{k}}) \right)\right) -  \DIV\left(T_{\delta}(u_{n_{k}}) \frac{w_{n_{k}}\ast \nabla \omega_{\varepsilon}}{\varepsilon^2} \right). 
\end{equation}

Passing to the limit in \eqref{eq:PDE_sat_by_unk} and using uniqueness of solutions to \eqref{eq:eq_for_Schauder_FP} from~\cite{MR1465184}, we obtain that for every subsequence of $\{u_{n}\}_{n}$ we can extract a subsequence which converges to a unique limit $u = P(w)$. By a standard subsequence argument, this means that the whole sequence $\{u_{n}\}_{n}$ converges to $u=P(w)$. Therefore $P$ is continuous.

\underline{\textit{Step 3: $P$ maps $X$ to $X$}}. We write the equation~\eqref{eq:eq_for_Schauder_FP} in the form
\begin{align*}
\partial_t u &=T_{\delta}'(u) |\nabla u|^2 \left(\frac{1}{\varepsilon^2} + F''(u) \right) + T_{\delta}(u) \Delta u \left(\frac{1}{\varepsilon^2} + F''(u) \right) + T_{\delta}(u) |\nabla u|^2 F^{(3)}(u)
\\
& \quad -  T_{\delta}'(u)\, \nabla u \cdot \frac{w\ast \nabla \omega_{\varepsilon}}{\varepsilon^2}  - T_{\delta}(u) \frac{w\ast \Delta \omega_{\varepsilon}}{\varepsilon^2}. 
\end{align*}
We substitute $u = v\, e^{\sigma t}$ and we compute PDE satisfied by $v$: 
\begin{align*}
\partial_t v \, e^{\sigma t} + v \, \sigma \, e^{\sigma t} &=T_{\delta}'(u) |\nabla v|^2 \left(\frac{1}{\varepsilon^2} + F''(u) \right) \, e^{2\sigma t} + T_{\delta}(u) \Delta v \left(\frac{1}{\varepsilon^2} + F''(u) \right) e^{\sigma t}
\\
& \quad  + T_{\delta}(u) |\nabla v|^2 F^{(3)}(u) e^{2\sigma t} \, -  T_{\delta}'(u)\, \nabla v \cdot \frac{w\ast \nabla \omega_{\varepsilon}}{\varepsilon^2}e^{\sigma t}  - T_{\delta}(u) \frac{w\ast \Delta \omega_{\varepsilon}}{\varepsilon^2}. 
\end{align*}
Now, we multiply by $v$ and evaluate the equation at the point $(t_{*},x_{*})$ where $v^2$ attains its maximum. Therefore, all the terms with $\nabla v$ and $|\nabla v|^2$ vanish (as $|\nabla v|^2 v = \nabla v \cdot \nabla v^2/2$).
\begin{align*}
\frac{1}{2}\partial_t v^2 e^{\sigma t_{*}} + v^2\sigma e^{\sigma t_{*}} = T_{\delta}(u)\, v\, \Delta v \left(\frac{1}{\varepsilon^2} + F''(u) \right)e^{\sigma t_{*}}  - v\, T_{\delta}(u) \frac{w\ast \Delta \omega_{\varepsilon}}{\varepsilon^2}. 
\end{align*}
Using $v\, \Delta v = -|\nabla v|^2 + \Delta v^2 \leq 0$ and $\partial_t v^2 \geq 0$ we obtain
$$
v^2 \, \sigma \, e^{\sigma t_{*}} \leq -v\, T_{\delta}(u) \frac{w\ast \Delta \omega_{\varepsilon}}{\varepsilon^2},
$$
so that
$$
v^2(t_*,x_*) \, \sigma \, e^{\sigma t_{*}} \leq |v(t_*,x_*)| \frac{\|\Delta \omega_{\varepsilon}\|_{1} \|w(t_*,\cdot)\|_{\infty}}{\delta \varepsilon^2}. 
$$
where we used the definition of $T_{\delta}$. As $v^2$ attains maximum at $(t^*,x^*)$, $|v(t_*,x_*)|$ also attains maximum at $(t^*,x^*)$. Therefore, taking into account the initial condition
$$
\|v\|_{\infty} \,  \leq \max \left(\frac{\|\Delta \omega_{\varepsilon}\|_{1} \|w(t_*,\cdot)\|_{\infty}}{\delta \varepsilon^2 \sigma} e^{-\sigma t_{*}}, \|u_0\|_{\infty}\right) \leq
\max\left(\frac{\|\Delta \omega_{\varepsilon}\|_{1} \|w e^{-\sigma t}\|_{\infty}}{\delta \varepsilon^2 \sigma}, \|u_0\|_{\infty}\right).
$$
Choosing $\sigma = 2 \|\Delta \omega_{\varepsilon}\|_{1}/(\delta \varepsilon^2)$, we obtain estimate
$$
\|v\|_{\infty} \,  \leq \max\left(\frac{1}{2}\, \|w e^{-\sigma t}\|_{\infty}, \|u_0\|_{\infty}\right).
$$
By definition of the norm 
\begin{equation}\label{eq:estimate_operator_P}
\|Pw\|_{\infty,\sigma} \leq \max\left(\frac{1}{2}\, \|w\|_{\infty,\sigma}, \|u_0\|_{\infty}\right).
\end{equation}

Moreover, the parabolic version of de Giorgi-Nash-Moser theory, see~\cite[Chap. V, Theorem 1.1]{ladyzhenskaya1968linear}, implies that there exists $\alpha= \alpha(\|w\|_{\infty,\sigma})$ such that the solution of~\eqref{eq:eq_for_Schauder_FP} satisfy 
$$
\|u\|_{C^{\alpha,\alpha/2}} \leq f(\|w\|_{\infty,\sigma}).
$$
Without loss of generality we may assume that $f(\|w\|_{\infty,\sigma})$ does not decrease and $\alpha(\|w\|_{\infty,\sigma})$ does not increase when $\|w\|_{\infty,\sigma}$ increases.\\

\noindent We proceed to choosing values of parameters $M$, $\alpha$, $\kappa$ and concluding the proof. We choose
$$
M=3\norm{u_{0}}_{L^{\infty}},
\qquad
\alpha = \alpha(M), 
\qquad 
\kappa=\f{\norm{u_{0}}_{L^{\infty}}}{2f(M)}.
$$
Since $w$ is in $X$ and $f$ is nondecreasing we obtain
\begin{equation*}
\norm{Pw}_{X}\le \f{1}{2}\|w\|_{\infty,\sigma}+ \|u_0\|_{\infty}+\kappa f(\|w\|_{\infty,\sigma}) \le \f{M}{2}+ \|u_0\|_{\infty}+\kappa f(M) \le 3\norm{u_{0}}_{L^{\infty}}=M.  
\end{equation*}
This means that $P$ maps $X$ to $X$.

\underline{\textit{Step 4: $P(X)$ is relatively compact in $X$}}. The relative compactness of $P(X)$ follows from \eqref{estimate:lieberman}. 

The proof is concluded. 
\end{proof}

\begin{proof}[Proof of Theorem \ref{thm:estim_reg}]
To prove \ref{item_estreg1} and \ref{item_estreg3} we want to apply~\eqref{eq:energyreg} and Assumption \ref{ass:potentialF} on the potential. The energy identity reads:
$$
\int_{\Td}F(u_{\delta})\diff x+\f{1}{4\eps^{2}}\int_{\Td}\int_{\Td}\omega_{\eps}(y)|u_{\delta}(x)-u_{\delta}(x-y)|^{2}\diff x\diff y
+ \int_{0}^{t}\int_{\Td} T_{\delta}(u_{\delta}) \, |\nabla \mu_\delta|^2 = E_{\varepsilon}[u^{0}],
$$
Applying Lemma \ref{lem:Poincare_with_average}, we deduce
$$
\int_{\Td}F(u_{\delta})\diff x+C_p \int_{\Td} |u - (u)_{\Td}|^2
+ \int_{0}^{t}\int_{\Td} T_{\delta}(u_{\delta}) \, |\nabla \mu_\delta|^2 \le E_{\varepsilon}[u^{0}]
$$
Splitting $F = F_1 +F_2$ and applying Assumption \ref{ass:potentialF} we obtain 
$$
\int_{\Td}F_1(u_{\delta})\diff x+C_p \int_{\Td} |u_{\delta} - (u_{\delta})_{\Td}|^2
+ \int_{0}^{t}\int_{\Td} T_{\delta}(u_{\delta}) \, |\nabla \mu_\delta|^2 \le E_{\varepsilon}[u^{0}] + C_{9} + C_{10} \int_{\Td} |u_{\delta}|^2 
$$
Note that by conservation of mass, $(u_{\delta})_{\Td} = (u^0)_{\Td}$. Therefore, applying the simple inequality $|a+b|^2 \leq 2|a|^2 + 2|b|^2$ and $C_p > 2\,C_{10}$, we obtain an $L^{\infty}(0,T; L^2(\Td))$ estimate on $\{u_{\delta}\}$ which can be improved to $L^{\infty}(0,T; L^k(\Td))$ if $F_1 \neq 0$ cf. \ref{assumption_pot_3} in Assumption \ref{ass:potentialF}. Then, \ref{item_estreg1} and so, \ref{item_estreg3} is easily implied by the energy as all possibly negative terms are bounded. \\

\noindent Now, to prove \ref{item_estreg2} we want to use the entropy equality \eqref{eq:entropyreg}:
$$
\Phi_{\delta}[u_\delta](t) + \frac{1}{2\eps^{2}} \int_0^t \int_{\Td} \int_{\Td} \omega_{\varepsilon}(y) \, |\nabla u_{\delta}(x) - \nabla u_{\delta}(x-y)|^2 + \int_0^t \int_{\Td} F''(u_{\delta}) |\nabla u_{\delta}|^{2} = \Phi_{\delta}[u^{0}].
$$
To exploit it, for $\gamma$ to be chosen later, $\varepsilon \in (0, \widetilde{\varepsilon}_0(\gamma))$ we have by Lemma~\ref{lem:poincare_nonlocal_H1_L2}
\begin{multline*}
\Phi_{\delta}[u_\delta](t) + \frac{1}{\gamma} \int_0^t \int_{\Td} |\nabla u_\delta |^2 + \int_0^t \int_{\Td} F_1''(u_{\delta}) |\nabla u_{\delta}|^{2} \le \\ \le \Phi_{\delta}[u^{0}] +  C(\gamma) \int_0^t \int_{\Td} \|u_{\delta} \|^2_{L^2(\Td)} + \|F_2''\|_{\infty} \int_0^t \int_{\Td} |\nabla u_{\delta}|^{2}.
\end{multline*}
We choose $\gamma = \frac{1}{1+\|F_2''\|_{\infty}}$ which yields estimates \ref{item_estreg2}, \ref{item_estreg6} and \ref{item_estreg7} (here, we also exploit \ref{item_bound_on_phi_delta_interms_of_phi} in Lemma \ref{lem:properties_phi_delta} to control $\Phi_{\delta}[u^{0}]$). Now, to see~\ref{item_estreg4} we take a smooth test function $\varphi$ and write thanks to the Hölder inequality
\begin{align*}
\left|\int_0^T \int_{\Td} \partial_t u_{\delta} \, \varphi \diff x \diff t \right| &= \left|\int_0^T \int_{\Td} T_{\delta}(u_{\delta})^{1/2}T_{\delta}(u_{\delta})^{1/2}\nabla\mu_{\delta}\cdot\nabla\varphi \diff x \diff t \right|\\
&\le \|T_{\delta}(u_{\delta})^{1/2}\|_{L^{\infty}(0,T;L^{2k}(\Td))}\|T_{\delta}(u_{\delta})^{1/2}\nabla\mu_{\delta}\|_{L^{2}((0,T)\times \Td)}\|\nabla\varphi\|_{L^{2}(0,T;L^{s}(\Td))}\\
&\le C\|\nabla\varphi\|_{L^{2}(0,T;L^{s}(\Td))}. 
\end{align*}
In the last line we used estimates~\ref{item_estreg1}, \ref{item_estreg3} and the definition of $T_{\delta}$. This concludes the proof for estimates~\ref{item_estreg4} and then~\ref{item_estreg5} easily follows.\\

\noindent Finally, we prove \ref{item_estreg_add}. We note from \ref{item_estreg6} that $\{\nabla u_{\delta}^{k/2}\}$ is bounded in $L^{2}(0,T; L^2(\Td))$ and from \ref{item_estreg1} that $\{u_{\delta}^{k/2}\}$ is bounded in $L^{\infty}(0,T; L^2(\Td))$. Therefore, by Sobolev embedding, we obtain that $\{u_{\delta}^{k/2}\}$ is bounded in $L^2(0,T; L^{\frac{2d}{d-2}}(\Td))$ so that $\{u_{\delta}\}$ is bounded in $L^k(0,T; L^{k \frac{d}{d-2}}(\Td))$.
\end{proof}

\subsection{Proof of Theorem~\ref{thm:weaksoldelta}}

\begin{proof}[Proof of Theorem \ref{thm:weaksoldelta}]
\underline{\textit{Step 1: Approximation of the potential.}}

For $F$ as in Assumption \ref{ass:potentialF}, we consider its mollification $F_{\delta} = F \ast \eta_{\delta}$ where $\{\eta_{\delta}\}$ is the usual mollifier. We note that $F_{\delta}$ is $C^4$ and that $F$, $F_{\delta}$ satisfy Assumption \ref{ass:potentialF} with comparable constants $C_{1}, ..., C_{10}$, see Lemma \ref{lem:F_mollification_estimates}. The most important is constant $C_{10}$ because there is a constraint on it in terms of $C_p$. More precisely, $F$ satisfies Assumption \ref{ass:potentialF} with $C_{10} < C_p/4$ so that from Lemma \ref{lem:F_mollification_estimates} we have that $F_{\delta}$ satisfies it with $2C_{10} < C_p/2$. This allows to apply Theorem \ref{thm:estim_reg} to otain uniform estimates. Moreover $F_\delta=F_{\delta,1}+F_{\delta,2}$ with $F^{(p)}_{\delta,(1,2)}\xrightarrow[\delta\to 0]{pointwise}F^{(p)}_{(1,2)}$ where $p=0,1,2$ is the order of derivative. 

\noindent \underline{\textit{Step 2: Compactness.}} Using Theorem \ref{thm:existence_approx_solutions}, we can obtain $u_{\delta}$ such that for all $\varphi \in L^{2}(0,T;W^{1,\infty}(\Td))$
\begin{equation}\label{eq:weaksolregmob}
\begin{split}
    \int_{0}^{T}\langle\p_{t}u_{\delta},\varphi\rangle_{(W^{1,s}(\Td))',W^{1,s}(\Td)}
&+ \\ +
\int_{0}^{T}\int_{\Td}T_{\delta}&(u_{\delta})\nabla B_{\varepsilon}[u_{\delta}]\cdot\nabla\varphi
+
\int_{0}^{T}\int_{\Td}u_{\delta}F_{\delta}''(u_{\delta})\nabla u_{\delta}\cdot\nabla\varphi=0.
\end{split}
\end{equation}
The plan is to send $\delta \to 0$ in \eqref{eq:weaksolregmob}. By Theorem \ref{thm:estim_reg} and standard compactness results we can extract a subsequence (not relabelled) such that 
\begin{enumerate}[label=(B\arabic*)]
\item\label{item_comp_1} $u_{\delta}\to u$ a.e. and in $L^2((0,T)\times\Td) \cap L^k((0,T)\times\Td)$,
\item\label{item_comp_2} $\nabla u_\delta \rightharpoonup\nabla u$ in $L^2((0,T)\times\Td)$,
\item\label{item_comp_3} $\partial_t u_{\delta} \weak \partial_t u$ in $L^2(0,T; W^{-1,s'}(\Td))$,
\item\label{item_comp_4} $\sqrt{F_{1,\delta}''(u_{\delta})}\nabla u_{\delta}\rightharpoonup\xi$ in $L^2((0,T)\times\Td)$ for some $\xi\in L^2((0,T)\times\Td)$.
\end{enumerate}
Only \ref{item_comp_1} needs some justification. From~\ref{item_estreg1}, \ref{item_estreg2}, \ref{item_estreg4} and Aubin-Lions lemma, we obtain the strong convergence  $u_{\delta}\to u$ a.e. and in $L^2((0,T)\times\Td)$. To see the second strong convergence, we interpolate between $L^{\infty}(0,T; L^k(\Td))$ and $L^k(0,T; L^{k \frac{d}{d-2}}(\Td))$ to prove that $\{u_{\delta}\}$ is bounded in $L^{k+\kappa}(0,T; L^{k+\kappa}(\Td))$ for some $\kappa>0$ because $k\frac{d}{d-2}>k$. Now, interpolating between $L^{k+\kappa}(0,T; L^{k+\kappa}(\Td))$ and $L^2((0,T)\times\Td)$ we obtain strong convergence in $L^{k}((0,T)\times \Td)$.

\underline{\textit{Step 3: Nonnegativity of $u$.}} 
The plan is to obtain a contradiction with the uniform estimate of the entropy. For $\alpha >0$, we define the sets
\begin{equation*}
  V_{\alpha,\delta}=\{(t,x)\in (0,T)\times \Td: \,  u_{\delta}(t,x)\le-\alpha \},
\end{equation*}
\begin{equation*}
  V_{\alpha,0}=\{(t,x)\in (0,T)\times \Td: \,  u(t,x)\le-\alpha \}.
\end{equation*}
By nonnegativity of $\phi_\delta$ (see \eqref{eq:entropycases} as well as the properties below) and \ref{item_estreg7} in Theorem~\ref{thm:estim_reg}, there is a constant $C(T)$ such that
\begin{equation*}
 \int_{V_{\alpha,\delta}} \phi_\delta(u_\delta) \diff x \diff t \leq   \int_{(0,T)\times \Td} \phi_\delta(u_\delta) \diff x \diff t \le C(T).
\end{equation*}
For $u_\delta \leq -\alpha$, we have $0 \leq \phi_\delta(-\alpha) \leq \phi_\delta(u_\delta)$ because $\phi_{\delta}'(x) \leq 0$ for $x \leq 0$, see \eqref{eq:entropycases}. Therefore,
$$
0 \leq \phi_{\delta}(-\alpha) \, \int_{V_{\alpha,\delta}} 1 \diff x \diff t = \int_{V_{\alpha,\delta}} \phi_\delta(x) \diff x \diff t   \leq C(T).
$$
\noindent Sending $\delta \to 0$, exploiting \ref{item_blow_up_phi_delta} in Lemma \ref{lem:properties_phi_delta} and using the strong convergence of $u_{\delta} \to u$ we discover
$$
\int_{V_{\alpha, 0}} 1 \diff x \diff t = \lim_{\delta \to 0} \int_{V_{\alpha, \delta}} 1\diff x \diff t = 0
$$
(we use here the fact from measure theory asserting that on the measure space $(X,\mu)$ if $f_n, f: X \to \R$ and $f_n \to f$ in $L^1(X,\mu)$ then for $\alpha \in \R$ we have $\int_{f_n < \alpha} \diff \mu \to \int_{f < \alpha} \diff \mu$ as $n \to \infty$). This means that $V_{\alpha,0}$ is a null set for each $\alpha > 0$, concluding the proof of the nonnegativity.

\underline{\textit{Step 4: Identification $\xi = \sqrt{F_1''(u)} \nabla u$.}} We want to use~\ref{item_comp_4} so we have to identify $\xi$. For that purpose, we use the convergence a.e. of $u_\delta$ in~\ref{item_comp_1} and the pointwise convergence $F_{\delta,1}'' \to F_{1}''$ to deduce that $F_{\delta,1}(u_{\delta}) \to F_1(u)$ a.e. Next, using Assumption~\ref{assumption_pot_3} for $F_{\delta,1}$ and estimate~\ref{item_estreg1}
$$
\left|\sqrt{F_{\delta,1}''(u_\delta)}\right|^{2} \leq C_{3} |u_{\delta}|^{k-2} + C_{4}.
$$
As (RHS) is uniformly integrable by strong convergence \ref{item_comp_1}, we deduce that $\left|\sqrt{F_{\delta,1}''(u_\delta)}\right|^{2}$ is uniformly integrable so that the Vitali convergence theorem implies
\begin{equation*}
\sqrt{F_{\delta,1}''(u_\delta)}\to \sqrt{F_{1}''(u)}\quad \text{in } L^2((0,T)\times \Td)   
\end{equation*}
Using weak convergence of gradient \ref{item_comp_2}, we finally obtain $\xi= \sqrt{F_{1}''(u)}\nabla u$.

\underline{\textit{Step 5: Passing to the limit in the first two terms of~\eqref{eq:weaksolregmob}}}. Using~\ref{item_comp_3} it is easy to pass to the limit in the first term of~\eqref{eq:weaksolregmob}. 
Now we focus on the second term. Note that
$$
\nabla B_{\varepsilon}[u_{\delta}](x) =\frac{1}{\varepsilon^2}( \nabla u_{\delta} -  \omega_{\varepsilon} \ast \nabla u_{\delta}).
$$

The two terms of $\nabla B_{\eps}$ are treated in the same way. We focus only on the harder term $\nabla u_{\delta}$ which does not have regularizing properties of the convolution. For this term it is sufficient to prove that $T_{\delta}(u_\delta)\nabla u_\delta\rightharpoonup u\nabla u$ weakly in $L^{2}(0,T;L^{1}(\Td))$. We first note that by definition of $T_\delta$, the strong convergence~\ref{item_comp_1} and the nonnegativity of $u$, we obtain $T_{\delta}(u_\delta) \to u$ strongly in $L^2((0,T)\times\Td)$. Hence, the result follows from weak convergence of the gradient~\ref{item_comp_2}.

\underline{\textit{Step 6: Passing to the limit in the third term of~\eqref{eq:weaksolregmob}}}. For the third term we write $F_\delta''=F''_{\delta,1}+F''_{\delta,2}$ as discussed in Step 1. Then we decompose 
\begin{align*}
\int_{0}^{T}\int_{\Td}T_{\delta}(u_{\delta})F_{\delta}''(u_{\delta})\nabla u_{\delta}\cdot\nabla\varphi&=\int_{0}^{T}\int_{\Td}T_{\delta}(u_{\delta})F_{\delta,1}''(u_{\delta})\nabla u_{\delta}\cdot\nabla\varphi+\int_{0}^{T}\int_{\Td}T_{\delta}(u_{\delta})F_{\delta,2}''(u_{\delta})\nabla u_{\delta}\cdot\nabla\varphi\\
&=I_{1}+I_{2}. 
\end{align*}
For $I_{1}$ we write
\begin{equation*}
I_{1}=    \int_{0}^{T}\int_{\Td}T_{\delta}(u_{\delta})\sqrt{F_{\delta,1}''(u_{\delta})}\sqrt{F_{\delta,1}''(u_{\delta})}\nabla u_{\delta}\cdot\nabla\varphi. 
\end{equation*}
It remains to prove that $T_{\delta}(u_{\delta})\sqrt{F_{\delta,1}''(u_\delta)}$ converges strongly in $L^2((0,T)\times\Td)$. Note that since $u_\delta\to u\ge 0$ we have $T_{\delta}(u_{\delta})\sqrt{F_{\delta,1}''(u_\delta)}\to u\sqrt{F_{1}''(u)}$ a.e. Moreover, 
$$
\left(T_{\delta}(u_{\delta})\sqrt{F_{\delta,1}''(u_\delta)}\,\right)^2 \leq C_{3}\,|u_{\delta}|^{k} + C_{4}
$$
As the (RHS) is uniformly integrable by strong convergence, we deduce that (LHS) is uniformly integrable. Hence, the Vitali convergence theorem implies  Assumption~\ref{assumption_pot_3} and Estimate~\ref{item_estreg1} show that 
\begin{equation*}
T_{\delta}(u_{\delta})\sqrt{F_{\delta,1}''(u_\delta)} \to u \sqrt{F_{1}''(u)} \mbox{ in } L^2((0,T)\times \Td)
\end{equation*}
so that $
 I_{1}\to \int_{0}^{T}\int_{\Td}uF_{1}''(u)\nabla u\cdot\nabla\varphi.$
For $I_{2}$, as $\nabla u_{\delta} \rightharpoonup \nabla u$ weakly in $L^2((0,T)\times\Td)$, it is sufficient to prove the strong convergence of $T_{\delta}(u_\delta)F_{\delta,2}''(u_\delta)$ in $L^2((0,T)\times\Td)$. Thanks to Assumption~\ref{assumption_pot_4} on $F_{\delta,2}''$, this term is uniformly bounded so that trivially $|T_{\delta}(u_\delta)F_{\delta,2}''(u_\delta)| \leq \|F_{2}'' \|_{\infty} |T_{\delta}(u_{\delta})|$.
Therefore, Vitali convergence theorem implies $T_{\delta}(u_\delta)F_{\delta,2}''(u_\delta)$ in $L^2((0,T)\times\Td)$ and so
\begin{equation*}
 I_{2}\to \int_{0}^{T}\int_{\Td}uF_{2}''(u)\nabla u\cdot\nabla\varphi.   
\end{equation*}

\underline{\textit{Step 7: Energy and entropy estimates}}. We pass to the limit $\delta\to 0$ in~\eqref{eq:energyreg}-\eqref{eq:entropyreg}. With the above convergences and properties of the weak limit, we obtain the result. This ends the proof of Theorem~\ref{thm:weaksoldelta}. 
\end{proof}

Now that weak solutions of the nonlocal Cahn-Hilliard equation have been constructed for a given initial datum, it remains to prove the convergence of the nonlocal system to the local one. This is the purpose of the next section. 

\section{Limit $\eps\to 0$}\label{sec:conv_eps}

Weak solutions of the local Cahn-Hilliard equation are understood in the sense of Definition~\ref{def:weak_sol_limit}. In order to prove the convergence of the nonlocal system to these solutions, we first collect the necessary estimates uniform in $\eps$. Then we pass to the limit $\eps\to 0$ to conclude the proof of Theorem~\ref{thm:final}.

\subsection{Uniform estimates in $\eps$}

We recall that in the previous section we had obtained the energy and entropy inequalities as well as estimates uniform in $\eps$.
\begin{lem}[Mass, energy, entropy]
The following identities hold true
\begin{align}
&\int_{\Td}u_{\eps}(t,\cdot)\diff x=\int_{\Td}u^{0}\diff x,\label{PWreg2} \\
&\f{d}{dt} E[u_\eps] + \int_{\Td} u_{\eps} \, |\nabla \mu_\eps|^2 \le 0,\label{eq:energy}\\
&\f{d}{dt}\Phi[u_\eps] + \frac{1}{2\eps^{2}} \int_{\Td} \int_{\Td} \omega_{\varepsilon}(y) \, |\nabla u_{\varepsilon}(x) - \nabla u_{\varepsilon}(x-y)|^2 \diff x \diff y + \int_{\Td} F''(u_{\eps}) |\nabla u_{\eps}|^{2} \diff x\le 0.\label{eq:entropy}
\end{align}
\end{lem}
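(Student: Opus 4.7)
The three statements are obtained by passing to the limit $\delta \to 0$ in the mass, energy, and entropy identities \eqref{PWreg}--\eqref{eq:entropyreg} of Theorem \ref{thm:existence_approx_solutions}, using the $\delta$-uniform compactness properties \ref{item_comp_1}--\ref{item_comp_4} and the estimates \ref{item_estreg1}--\ref{item_estreg7}. This is essentially the content of Step 7 of the proof of Theorem \ref{thm:weaksoldelta}, which was deferred there. The differential form of \eqref{eq:energy}--\eqref{eq:entropy} should be read as the statement that the integrated inequalities \eqref{eq:energy1}--\eqref{eq:entropy1} hold on every subinterval $[s,t]\subset[0,T]$, and that is the form in which we will establish them.

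Mass conservation is immediate from strong $L^1(\Td)$ convergence $u_\delta(t,\cdot) \to u_\eps(t,\cdot)$ for a.e.\ $t$ (after further extraction from \ref{item_comp_1}). For the energy inequality, $\int F(u_\delta)\diff x \to \int F(u_\eps)\diff x$ by Vitali using \ref{item_estreg1} and Assumption \ref{ass:potentialF}, while the nonlocal quadratic part of $E_\eps$ is a convex continuous functional on $L^2(\Td)$, hence weakly lower semicontinuous. The dissipation is controlled by \ref{item_estreg3}, so we extract a weak $L^2((0,T)\times\Td)$ limit $\xi$ of $\sqrt{T_\delta(u_\delta)}\,\nabla\mu_\delta$ and identify it as $\sqrt{u_\eps}\,\nabla\mu_\eps$, where $\nabla\mu_\eps := \nabla B_\eps[u_\eps] + F''(u_\eps)\nabla u_\eps$. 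To this end we test against smooth $\varphi$ and use: $\sqrt{T_\delta(u_\delta)}\,\varphi \to \sqrt{u_\eps}\,\varphi$ strongly in $L^p$ for every $p<\infty$ (Vitali, exploiting $u_\eps \ge 0$ from Step 3 of Theorem \ref{thm:weaksoldelta}); $\nabla B_\eps[u_\delta] \rightharpoonup \nabla B_\eps[u_\eps]$ weakly in $L^2$; and the weak convergence of $F''(u_\delta)\nabla u_\delta$ obtained by combining Step 4 of Theorem \ref{thm:weaksoldelta} for $F_1''$ with strong $L^p$ convergence of $F_2''(u_\delta)$ paired with the weak $L^2$ convergence of $\nabla u_\delta$. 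Weak lower semicontinuity of the $L^2$ norm then yields \eqref{eq:energy1}.

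For the entropy, $\Phi_\delta[u^0] \to \Phi[u^0]$ follows by dominated convergence from \ref{item_bound_on_phi_delta_interms_of_phi} of Lemma \ref{lem:properties_phi_delta}, while Fatou together with $\phi_\delta \to \phi$ pointwise and the strong convergence \ref{item_comp_1} gives $\Phi[u_\eps](t) \le \liminf_\delta \Phi_\delta[u_\delta](t)$. The nonlocal dissipation is a convex quadratic form in $\nabla u_\delta$ on $L^2((0,t)\times\Td)$, hence weakly lsc, and $\int_0^t\int F_1''(u_\delta)|\nabla u_\delta|^2 = \|\sqrt{F_1''(u_\delta)}\,\nabla u_\delta\|^2_{L^2}$ is lsc via the identification from Step 4 of Theorem \ref{thm:weaksoldelta}. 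The main obstacle is the sign-indefinite term $\int_0^t \int F_2''(u_\delta)|\nabla u_\delta|^2$, for which weak convergence of $\nabla u_\delta$ is insufficient. We plan to circumvent it by upgrading, at fixed $\eps$, the weak convergence of $\nabla u_\delta$ to strong $L^2((0,T)\times\Td)$ convergence: the $\delta$-uniform bound on the nonlocal seminorm $\iint \omega_\eps(y)|\nabla u_\delta(x)-\nabla u_\delta(x-y)|^2$ coming from \eqref{eq:entropyreg}, combined with \ref{item_estreg5}, provides uniform-in-$\delta$ equicontinuity of spatial and temporal translations of $\nabla u_\delta$ at scale $\eps$, hence Kolmogorov--Riesz compactness. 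With this strong convergence of $\nabla u_\delta$ together with the bounded convergence $F_2''(u_\delta) \to F_2''(u_\eps)$, the $F_2''$ piece passes to the limit as an equality, and summing all inequalities yields \eqref{eq:entropy1}. Finally, the differential forms \eqref{eq:energy}--\eqref{eq:entropy} follow from \eqref{eq:energy1}--\eqref{eq:entropy1} applied between arbitrary Lebesgue points of the (nonincreasing, hence BV) maps $t \mapsto E_\eps[u_\eps](t)$ and $t \mapsto \Phi[u_\eps](t)$.
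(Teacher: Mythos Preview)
Your approach matches the paper's (Step 7 of Theorem \ref{thm:weaksoldelta}, which the paper dispatches in a single sentence), and your treatment of mass and energy is correct. You also correctly isolate the real obstacle in the entropy inequality: the sign-indefinite term $\int_0^t\int_{\Td} F_2''(u_\delta)|\nabla u_\delta|^2$ is not amenable to weak lower semicontinuity alone. But your proposed fix --- strong $L^2$ compactness of $\nabla u_\delta$ via Kolmogorov--Riesz at fixed $\eps$ --- does not go through. The bound $\int_{\Td}\int_{\Td}\omega_\eps(y)|\nabla u_\delta(x)-\nabla u_\delta(x-y)|^2 \leq C\eps^2$ does not yield equicontinuity of spatial translations at arbitrarily small scales: in Fourier variables it controls $\sum_\xi|\xi|^2|\hat u_\delta(\xi)|^2\bigl(1-\hat\omega(\eps\xi)\bigr)$, and for frequencies $|\xi|\gg 1/\eps$ this gives only $\sum_{|\xi|\geq R}|\xi|^2|\hat u_\delta(\xi)|^2 \leq C\eps^2$, a \emph{fixed} number that does not tend to $0$ as $R\to\infty$. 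Rapidly oscillating gradients with $\|\nabla u_\delta\|_{L^2}$ of order $\eps$ are fully consistent with the bound, so no compactness follows. The Bourgain--Br\'ezis--Mironescu mechanism used later in the paper genuinely requires $\eps\to 0$, which is not the regime here.

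A workable repair: observe that the only downstream uses of \eqref{eq:entropy} are the uniform-in-$\eps$ bound on the nonlocal dissipation term (for Lemma \ref{lem:compactness-FK-tx}) and on $\sqrt{F_1''(u_\eps)}\,\nabla u_\eps$. These follow directly by weak lower semicontinuity from the corresponding $\delta$-uniform bounds established in the proof of Theorem \ref{thm:estim_reg}, where the $F_2''$ contribution was already absorbed via Lemma \ref{lem:poincare_nonlocal_H1_L2} \emph{before} any limit was taken. If you insist on \eqref{eq:entropy} in the sharp form stated, one can try to apply Lemma \ref{lem:Poincare_with_average} to each component of $\nabla u_\delta$ (zero torus average) so as to dominate $\|F_2''\|_\infty\int_{\Td}|\nabla u_\delta|^2$ by a fraction of the nonlocal dissipation and pass to the limit in the resulting nonnegative combined quadratic form; but this needs $\|F_2''\|_\infty < 2C_p$, which the assumptions do not guarantee.
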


\begin{lem}[Uniform estimates]\label{lem:uniform_est_just_eps}
The following sequences are bounded:
\begin{enumerate}[label=(\Alph*)]
    \item\label{item_est1} $\{u_\eps\}_{\eps}$ in $L^{\infty}(0,T; L^k( \Td))$,
    \item\label{item_est1_add} $\{u_\eps\}_{\eps}$ in $L^{k}(0,T; L^{k\f{d}{d-2}}( \Td))$,
    \item\label{item_est2} $\{\nabla u_\eps\}_{\eps}$ in $L^2((0,T)\times \Td)$,
    \item\label{item_est3} $\{\sqrt{u_\eps}\, \nabla \mu_\eps\}_{\eps}$ in $L^2((0,T)\times \Td)$,
    \item\label{item_est4} $\{\partial_t u_\eps\}_{\eps}$ in $L^2(0,T; W^{-1,s'}(\Td))$,
    \item\label{item_est5} $\{\partial_t \nabla u_\eps\}_{\eps}$ in $L^2(0,T; W^{-2,s'}(\Td))$,
    \item\label{item_est6} $\{\sqrt{F_{1}''(u_\eps)}\nabla u_\eps\}_{\eps}$ in $L^2((0,T)\times \Td)$.
\end{enumerate}
\end{lem}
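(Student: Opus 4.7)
The strategy is to mimic the proof of Theorem~\ref{thm:estim_reg}, replacing the role of the regularised mobility $T_\delta(u_\delta)$ by $u_\varepsilon$ itself (which is nonnegative by Theorem~\ref{thm:weaksoldelta}) and using the dissipation identities~\eqref{eq:energy}--\eqref{eq:entropy} together with the nonlocal Poincaré-type inequalities in Lemmas~\ref{lem:Poincare_with_average} and~\ref{lem:poincare_nonlocal_H1_L2}. Crucially, all constants appearing in those lemmas are uniform for $\varepsilon<\varepsilon_0$, so the resulting bounds are $\varepsilon$-uniform.

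I would first derive~\ref{item_est1} and~\ref{item_est3} from the energy inequality~\eqref{eq:energy}. Integrating~\eqref{eq:energy} in time and splitting $F=F_1+F_2$ via Assumption~\ref{ass:potentialF}, the nonlocal Dirichlet-type term on the left is bounded below by $C_p\int_{\Td}|u_\varepsilon-(u_\varepsilon)_{\Td}|^2$ thanks to Lemma~\ref{lem:Poincare_with_average}. The lower bound $F_2(u)\ge -C_9-C_{10}u^2$ with $4C_{10}<C_p$ combined with mass conservation~\eqref{PWreg2} allows to absorb the $F_2$ term into the Poincaré part, yielding an $L^\infty(0,T;L^2(\Td))$ bound; if $F_1\neq 0$, the $k$-growth assumption~\ref{assumption_pot_3} upgrades this to~\ref{item_est1}. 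Then~\ref{item_est3} follows directly since all other terms in~\eqref{eq:energy} are controlled.

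Next I would turn to the entropy identity~\eqref{eq:entropy} to get~\ref{item_est2} and~\ref{item_est6}. Applying Lemma~\ref{lem:poincare_nonlocal_H1_L2} with parameter $\gamma$, the nonlocal term dominates $\frac{1}{\gamma}\int_0^t\int_{\Td}|\nabla u_\varepsilon|^2$ up to a remainder $C(\gamma)\int_0^t\|u_\varepsilon\|_{L^2}^2$ already controlled by~\ref{item_est1}. Splitting $F''=F_1''+F_2''$ and choosing $\gamma=\tfrac{1}{1+\|F_2''\|_\infty}$, the $F_2''$ contribution is absorbed on the left; this gives simultaneously~\ref{item_est2} and~\ref{item_est6}. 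Finiteness of $\Phi[u^0]$ together with Lemma~\ref{lem:inv_poincare_ineq} ensures the right-hand side is finite uniformly in $\varepsilon$.

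For~\ref{item_est1_add} I would use that~\ref{item_est6} together with the lower bound on $F_1''$ in~\ref{assumption_pot_3} gives an $L^2$ bound on $\nabla u_\varepsilon^{k/2}$, which combined with the $L^\infty(0,T;L^2)$ bound on $u_\varepsilon^{k/2}$ (from~\ref{item_est1}) and Sobolev embedding yields $u_\varepsilon^{k/2}\in L^2(0,T;L^{2d/(d-2)}(\Td))$, i.e.~\ref{item_est1_add}. Finally,~\ref{item_est4} follows by testing the equation against $\varphi\in L^2(0,T;W^{1,s}(\Td))$ and applying Hölder:
$$
\Bigl|\int_0^T\!\!\int_{\Td}\partial_t u_\varepsilon\,\varphi\Bigr|=\Bigl|\int_0^T\!\!\int_{\Td}\sqrt{u_\varepsilon}\cdot\sqrt{u_\varepsilon}\nabla\mu_\varepsilon\cdot\nabla\varphi\Bigr|\le \|\sqrt{u_\varepsilon}\|_{L^\infty_tL^{2k}_x}\|\sqrt{u_\varepsilon}\nabla\mu_\varepsilon\|_{L^2_{t,x}}\|\nabla\varphi\|_{L^2_tL^s_x},
$$
where $s=\tfrac{2k}{k-1}$ is the conjugate exponent chosen precisely so the three factors are controlled by~\ref{item_est1} and~\ref{item_est3}. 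Bound~\ref{item_est5} then follows immediately by applying a spatial derivative in the distributional sense. The main subtlety throughout is verifying that the smallness condition $4C_{10}<C_p$ on $F_2$ suffices to close the energy bound uniformly in $\varepsilon$; all other estimates are routine consequences of the dissipation identities once~\ref{item_est1} is in hand.
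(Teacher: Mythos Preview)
Your proposal is correct and matches the paper's approach: the paper does not give an explicit proof of Lemma~\ref{lem:uniform_est_just_eps} but simply states that the estimates were obtained in the previous section, i.e.\ they are inherited from Theorem~\ref{thm:estim_reg} (whose bounds are uniform in both $\delta$ and $\varepsilon$) by passing to the limit $\delta\to 0$, or equivalently by rerunning that argument directly on $u_\varepsilon$ using the dissipation inequalities~\eqref{eq:energy}--\eqref{eq:entropy}. Your write-up does exactly this, with the same use of Lemmas~\ref{lem:Poincare_with_average} and~\ref{lem:poincare_nonlocal_H1_L2} and the same choice $\gamma=\frac{1}{1+\|F_2''\|_\infty}$; the only minor imprecision is that Lemma~\ref{lem:inv_poincare_ineq} is needed to bound $E_\varepsilon[u^0]$ uniformly (the energy right-hand side), not the entropy one, but this does not affect correctness.
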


Our last ingredient for the proof of Theorem~\ref{thm:final} is about the compactness of $u_\eps$ and its gradient. 

\begin{lem}[Compactness]\label{lem:compactness-FK-tx}
Sequences $\{u_\eps\}_{\eps}$ and $\{\nabla u_\eps\}_{\eps}$ are strongly compact in $L^2((0,T)\times \Td)$.
\end{lem}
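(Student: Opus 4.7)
The plan is to treat the two sequences $\{u_\eps\}$ and $\{\nabla u_\eps\}$ by different routes.

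\emph{Compactness of $\{u_\eps\}$.} This follows from the Aubin--Lions lemma applied to the compact chain $H^1(\Td) \hookrightarrow\hookrightarrow L^2(\Td) \hookrightarrow W^{-1,s'}(\Td)$: estimate~\ref{item_est2} bounds $\{u_\eps\}$ in $L^2(0,T; H^1(\Td))$, and estimate~\ref{item_est4} controls $\{\partial_t u_\eps\}$ in $L^2(0,T; W^{-1,s'}(\Td))$, so strong convergence in $L^2((0,T) \times \Td)$ follows along a subsequence.

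\emph{Compactness of $\{\nabla u_\eps\}$.} This is the more delicate step, since no uniform bound on $\{\nabla u_\eps\}$ in a space compactly embedded in $L^2(\Td)$ is available. The key is to exploit the entropy dissipation~\eqref{eq:entropy}, which, after integration in time, yields the Bourgain--Brézis--Mironescu-type bound
\[
\int_0^T \frac{1}{\eps^{2}} \int_{\Td} \int_{\Td} \omega_{\varepsilon}(y) \, |\nabla u_{\eps}(t, x) - \nabla u_{\eps}(t, x-y)|^2 \, dy \, dx \, dt \leq C
\]
uniformly in $\eps$. Combined with the $L^2$-bound~\ref{item_est2}, the compactness theorem of Ponce~\cite{MR2041005} furnishes a uniform spatial modulus of continuity: there exists a function $\eta(h) \to 0$ as $h \to 0$ with
\[
\sup_{\eps} \int_0^T \int_{\Td} |\nabla u_\eps(t, x+h) - \nabla u_\eps(t, x)|^2 \, dx \, dt \leq \eta(|h|).
\]
For temporal equicontinuity I would use estimate~\ref{item_est5}: since $\partial_t \nabla u_\eps$ is bounded in $L^2(0,T; W^{-2,s'}(\Td))$, the fundamental theorem of calculus provides an $O(\sqrt{h})$ control on $\nabla u_\eps(\cdot + h) - \nabla u_\eps(\cdot)$ in $L^2(0,T-h; W^{-2,s'}(\Td))$. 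I would upgrade this weak-in-space control to smallness in $L^2((0,T-h) \times \Td)$ by an Ehrling-type interpolation against the spatial modulus $\eta$ obtained above. With uniform boundedness in $L^2((0,T) \times \Td)$ together with both equicontinuities at hand, the Riesz--Fréchet--Kolmogorov criterion on $L^2((0,T) \times \Td)$ yields the desired strong compactness.

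The principal obstacle is that the $\eps$-dependent BBM seminorm is not equivalent to any fixed fractional Sobolev norm, so the standard Aubin--Lions framework does not apply directly to $\{\nabla u_\eps\}$. One must invoke Ponce's result to extract a common spatial modulus of continuity for the family, and then carefully interpolate this nonstandard spatial regularity with the weak time-derivative estimate from~\ref{item_est5} in order to recover joint spatio-temporal compactness rather than merely slice-wise compactness in $x$.
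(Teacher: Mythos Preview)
Your proposal is correct and follows essentially the same strategy as the paper: Aubin--Lions for $\{u_\eps\}$, and for $\{\nabla u_\eps\}$ the combination of the BBM/Ponce spatial modulus coming from the entropy dissipation with the time-derivative bound~\ref{item_est5}, assembled via Fr\'echet--Kolmogorov. The paper implements your ``Ehrling-type interpolation'' concretely by mollifying in space and writing $\partial_t \nabla u_\eps$ as a high-order spatial derivative of an $L^1$ flux (Appendix~\ref{app:FK_tx}), which is precisely the mechanism you describe; the only cosmetic difference is that the paper's spatial compactness is phrased with $\limsup_{\eps\to 0}$ and mollifiers rather than a uniform translation modulus $\sup_\eps$, but these are equivalent for the purpose of the argument.
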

\begin{proof}
The compactness of $\{u_\eps\}$ follows from the Lions-Aubin lemma applied to estimates~\ref{item_est1},\ref{item_est2} and \ref{item_est4}. Then, for the compactness of $\{\nabla u_\eps\}$, the detailed proof is presented in Appendix~\ref{app:FK_tx}. Roughly speaking, from~\ref{item_est5} we know that the sequence is compact in time. Compactness in space follows from Theorem~\ref{thm:ponce_tx} together with the estimate provided by the entropy on the quantity:
$$
\frac{1}{4\eps^{2}} \int_0^T \int_{\Td} \int_{\Td} \omega_{\varepsilon}(y) \, |\nabla u_{\varepsilon}(x) - \nabla u_{\varepsilon}(x-y)|^2 \diff x \diff y \diff t \leq C.
$$
An application of the Fréchet-Kolmogorov theorem yields the result.  
\end{proof}

Now we are ready to prove our main theorem. 

\subsection{Proof of Theorem~\ref{thm:final}: convergence $\eps\to 0$}\label{subsec:conv_eps}

We want to pass to the limit $\eps\to 0$ in Equations~\eqref{eq:CH1}-\eqref{eq:CH2} and obtain weak solutions of the local Cahn-Hilliard equation. We have at most bounds on the gradient of $u_{\eps}$ and the limit equation has four derivatives. That means we need to mimic at the epsilon level integration by parts for nonlocal operators. For that purpose, we define the operator 
\begin{equation}\label{eq:nonlocal_gradient}
S_{\eps}[\varphi](x,y):=\f{\sqrt{\omega_{\eps}(y)}}{\sqrt{2}\eps}(\varphi(x-y)-\varphi(x)) 
\end{equation}
which has the following properties:
\begin{lem}\label{lem:S_properties} The operator $S_{\eps}$ satisfies:
\begin{enumerate}[label=(S\arabic*)]
    \item $S_{\eps}$ is a linear operator that commutes with derivatives with respect to $x$,
    \item\label{propS_product_rule} for all functions $f,g: \Td \to \R$ we have
    \begin{align*}S_{\eps}[fg](x,y)-S_{\eps}[f](x,y)g(x)&-S_{\eps}[g](x,y)f(x)= \\ &= \f{\sqrt{\omega_{\eps}(y)}}{\sqrt{2}\eps}[(f(x-y)-f(x))(g(x-y)-g(x))].
    \end{align*}
    \item\label{propS_nonneg} for all $u, \varphi \in L^2(\Td)$
    $$
    \langle B_{\eps}[u](\cdot),\varphi(\cdot)\rangle_{L^{2}(\Td)}=\langle S_{\eps}[u](\cdot,\cdot),S_{\eps}[\varphi](\cdot,\cdot)\rangle_{L^{2}(\Td \times \Td)}.
    $$
    \item\label{propS:conv_to_energy}
    if $\{u_{\varepsilon}\}$ is strongly compact in $L^2(0,T; H^1(\Td))$ and $\varphi \in L^{\infty}((0,T)\times \Td)$ we have
    $$
    \int_0^T \int_{\Td} \int_{\Td} (S_{\varepsilon}[u_{\varepsilon}])^2 \, \varphi(t,x) \to D\, \int_0^T \int_{\Td} |\nabla u(t,x)|^2 \, \varphi(t,x)
    $$
    where $D = \frac{1}{2}\, \int_{B_1} \omega(y) |y|^2 \diff y$.
\end{enumerate}
\end{lem}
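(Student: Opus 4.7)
Properties (S1)--(S3) are direct consequences of the definition \eqref{eq:nonlocal_gradient}. Linearity in (S1) is clear, and since $\omega_\eps(y)$ has no $x$-dependence, partial derivatives in $x$ commute with $S_\eps$ trivially. For (S2), I would expand the left-hand side as
$$
\frac{\sqrt{\omega_\eps(y)}}{\sqrt{2}\,\eps}\bigl[f(x-y)g(x-y) - f(x)g(x) - (f(x-y)-f(x))g(x) - (g(x-y)-g(x))f(x)\bigr]
$$
and check that $f(x)g(x)$ cancels while the remaining four terms rearrange into the stated product of increments. For (S3), starting from
$$
\langle S_\eps[u], S_\eps[\varphi]\rangle_{L^2(\Td\times\Td)} = \frac{1}{2\eps^2}\int_{\Td}\int_{\Td}\omega_\eps(y)\bigl(u(x-y)-u(x)\bigr)\bigl(\varphi(x-y)-\varphi(x)\bigr)\,dy\,dx,
$$
I would expand the product into four terms and use the evenness $\omega_\eps(-y)=\omega_\eps(y)$ together with the change of variables $x \mapsto x+y$ on the torus to show that the two cross terms coincide; the resulting expression simplifies to $\frac{1}{\eps^2}\int_{\Td}(u-\omega_\eps\ast u)\varphi\,dx = \langle B_\eps[u],\varphi\rangle_{L^2(\Td)}$.

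The real work is (S4). Set $v_\eps := u_\eps - u$, which tends to $0$ strongly in $L^2(0,T;H^1(\Td))$ by Lemma~\ref{lem:compactness-FK-tx}. The plan is to decompose via $(a+b)^2 = a^2 + 2ab + b^2$:
$$
(u_\eps(x-y) - u_\eps(x))^2 = \bigl(v_\eps(x-y)-v_\eps(x)\bigr)^2 + 2\bigl(v_\eps(x-y)-v_\eps(x)\bigr)\bigl(u(x-y)-u(x)\bigr) + \bigl(u(x-y)-u(x)\bigr)^2,
$$
multiply by $\frac{\omega_\eps(y)}{2\eps^2}\varphi(t,x)$, integrate over $(0,T)\times\Td\times\Td$, and treat the three contributions separately. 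The quadratic form
$$
Q_\eps[w] := \frac{1}{2\eps^2}\int_0^T\int_{\Td}\int_{\Td}\omega_\eps(y)\bigl(w(x-y)-w(x)\bigr)^2\,dy\,dx\,dt
$$
satisfies the uniform bound $Q_\eps[w] \le C\,\|\nabla w\|_{L^2((0,T)\times\Td)}^2$, which follows from $|w(x-y)-w(x)|^2 \le |y|^2\int_0^1 |\nabla w(x-sy)|^2\,ds$, translation invariance on $\Td$, and the second-moment identity $\int\omega_\eps(y)|y|^2\,dy = \eps^2\int\omega(z)|z|^2\,dz = 2D\eps^2$. Since $\|\varphi\|_\infty<\infty$ and $\nabla v_\eps \to 0$ in $L^2$, the first term vanishes in the limit; by Cauchy--Schwarz and the uniform boundedness of $Q_\eps[u]$, so does the cross term.

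It remains to establish the Bourgain--Brézis--Mironescu-type convergence
$$
\int_0^T\int_{\Td}\int_{\Td}\frac{\omega_\eps(y)}{2\eps^2}\bigl(u(x-y)-u(x)\bigr)^2\varphi(t,x)\,dy\,dx\,dt \xrightarrow{\eps\to 0} D\int_0^T\int_{\Td}|\nabla u|^2\varphi\,dx\,dt
$$
for the fixed limit $u$. For smooth $u$ this follows from the second-order Taylor expansion $u(x-y)-u(x) = -\nabla u(x)\cdot y + O(|y|^2)$, from the moment identity in \eqref{as:omega}, and from the third-moment bound $\int\omega(y)|y|^3\,dy<\infty$, which ensures the remainder contributes $O(\eps)$ after division by $\eps^2$. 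The extension to $u \in L^2(0,T;H^1(\Td))$ is by density, using the uniform estimate $Q_\eps[\cdot] \le C\|\nabla\cdot\|_{L^2}^2$ already established. The main subtlety I anticipate is handling the variable weight $\varphi(t,x)$ as $\eps \to 0$; the decomposition into $v_\eps$ and $u$ is designed precisely so that all $\eps$-dependence inherited from the sequence $u_\eps$ is confined to terms vanishing via the quantitative Poincaré-type bound, leaving the BBM limit to be invoked only for the fixed function $u$.
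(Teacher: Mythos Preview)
Your proofs of (S1)--(S3) are essentially the paper's direct computations; for (S3) the paper symmetrises via the change of variables $x'=x-y$, $y'=-y$ rather than expanding into four terms, but the content is the same.

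For (S4) you take a genuinely different route. The paper rescales $y\mapsto\eps y$ to write the quantity as
\[
\tfrac{1}{2}\int_{B_1}\omega(y)\int_0^T\!\!\int_{\Td}\varphi(t,x)\,\frac{|u_\eps(t,x)-u_\eps(t,x-\eps y)|^2}{\eps^2}\,\diff x\,\diff t\,\diff y,
\]
then invokes Lemma~\ref{lem:diff_quot_strong_conv} --- which says that for a sequence strongly compact in $L^2(0,T;H^1)$ one has $\frac{u_\eps(x-\eps y)-u_\eps(x)}{\eps}\to -\nabla u\cdot y$ strongly in $L^2$ for each fixed $y$ --- and finishes by dominated convergence in $y$ with the majorant $\|\varphi\|_\infty\sup_\eps\|\nabla u_\eps\|_2^2\,|y|^2$. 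Thus the paper handles both sources of $\eps$-dependence (the sequence and the kernel) in one stroke. You instead decouple them: write $u_\eps=u+v_\eps$, kill the $v_\eps$-contributions via the uniform bound $Q_\eps[w]\le C\|\nabla w\|_2^2$, and then prove the weighted BBM limit for the \emph{fixed} function $u$ by Taylor expansion on smooth approximants and density. Your argument is correct and entirely self-contained; the paper's is shorter but relies on the auxiliary Lemma~\ref{lem:diff_quot_strong_conv}. One small remark: the convergence $v_\eps\to 0$ in $L^2(0,T;H^1)$ is part of the \emph{hypothesis} of (S4), not a consequence of Lemma~\ref{lem:compactness-FK-tx}; that lemma is how the hypothesis gets verified in the application, but it does not belong in the proof of the present statement.
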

\begin{proof}
The first one is trivial. For the second one, we just observe
\begin{align*}
&(f(x-y)-f(x))(g(x-y)-g(x)) =\\ & \qquad \qquad = -(f(x-y) - f(x))\,g(x) -  g(x-y) \,f(x) + f(x-y)\, g(x-y) = \\
& \qquad \qquad = -(f(x-y) - f(x))\,g(x) 
-(g(x-y) - g(x))\,f(x) 
+(f(x-y)\, g(x-y) - f(x)g(x)).
\end{align*}
For the third one, we compute
$$
\langle B_{\eps}[u](\cdot),\varphi(\cdot)\rangle_{L^{2}(\Td)} = \int_{\Td} \int_{\Td} \frac{\omega_{\varepsilon}(y)}{\varepsilon^2} (u(x) - u(x-y)) \, \varphi(x) \diff y \diff x.
$$
Changing variables $x' = x-y$, $y' = -y$ and using symmetry of the kernel
$$
\langle B_{\eps}[u](\cdot),\varphi(\cdot)\rangle_{L^{2}(\Td)} = \int_{\Td} \int_{\Td} \frac{\omega_{\varepsilon}(y)}{\varepsilon^2} (u(x'-y') - u(x')) \, \varphi(x'-y') \diff y' \diff x'.
$$
Therefore,
\begin{align*}
2\, \langle B_{\eps}[u](\cdot),\varphi(\cdot)\rangle_{L^{2}(\Td)} &= 
\int_{\Td} \int_{\Td} \frac{\omega_{\varepsilon}(y)}{\varepsilon^2} (u(x) - u(x-y)) \, (\varphi(x) - \varphi(x-y)) \diff y \diff x \\
&= \int_{\Td} \int_{\Td} \frac{\sqrt{\omega_{\varepsilon}(y)}}{\varepsilon} (u(x) - u(x-y)) \, \frac{\sqrt{\omega_{\varepsilon}(y)}}{\varepsilon}  (\varphi(x) - \varphi(x-y)) \diff y \diff x.
\end{align*}
Finally, to prove \ref{propS:conv_to_energy} we use the definition of $\omega_{\eps}$ and change variables with respect to $y$:
$$
\int_0^T \int_{\Td} \int_{\Td} (S_{\varepsilon}[u_{\varepsilon}])^2 \, \varphi(t,x) = 
 \int_{B_1}  \omega(y) \int_0^T \int_{\Td} \varphi(t,x) \frac{|u_{\varepsilon}(t,x) - u_{\varepsilon}(t,x-\varepsilon y)|^2}{2\varepsilon^2} \diff x \diff t \diff y
$$
For fixed $y$,
$$
\int_0^T \int_{\Td} \varphi(t,x) \frac{|u_{\varepsilon}(t,x) - u_{\varepsilon}(t,x-\varepsilon y)|^2}{\varepsilon^2} \diff x \diff t \to 
\int_0^T \int_{\Td}\varphi(t,x) |\nabla u(x)|^2 |y|^2   \diff x \diff t,
$$
$$
\left|\int_0^T \int_{\Td} \varphi(t,x) \frac{|u_{\varepsilon}(t,x) - u_{\varepsilon}(t,x-\varepsilon y)|^2}{\varepsilon^2} \diff x \diff t\right| \leq \| \varphi\|_{\infty} \, \sup_{\varepsilon} \|Du_{\varepsilon}\|_{2}^2 \, |y|^2
$$
due to Lemma \ref{lem:diff_quot_strong_conv}. As the majorant is integrable, the dominated convergence theorem concludes the proof.
\end{proof}

Since $B_{\eps}$ has a similar behavior as the Laplace operator, one can expect that $S_{\eps}$ acts like a gradient (in $L^2(\Td)$). Nevertheless, note that $S_{\eps}[\varphi](x,y)$ is a scalar. From now on, we write $\nabla S_{\eps}$ for the gradient of $S_{\eps}$ with respect to the variable $x$ \ie 
$$
\nabla S_{\eps}[\varphi](x,y):=\f{\sqrt{\omega_{\eps}(y)}}{\sqrt{2}\eps}(\nabla\varphi(x-y)-\nabla\varphi(x)).
$$

\begin{proof}[Proof of Theorem \ref{thm:final}] We only have to pass to the limit in the term $\int_{0}^{T}\int_{\Td}\DIV(u_{\eps}\nabla\mu_{\eps})\varphi\diff x\diff t$ where $\varphi \in C^3([0,T]\times \Td)$. Integrating by parts, we obtain 
\begin{equation}\label{eq:split_for_I1_I2_I3}
    \begin{split}
\int_{0}^{T}\int_{\Td}\DIV(u_{\eps}\nabla\mu_{\eps})\,&\varphi\diff x\diff t = -\int_{0}^{T}\int_{\Td}u_{\eps}\nabla\mu_{\eps}\cdot\nabla\varphi\diff x\diff t =\int_{0}^{T}\int_{\Td}B_{\eps}[u_{\eps}]\nabla u_{\eps}\cdot\nabla\varphi\diff x\diff t\\
&+\int_{0}^{T}\int_{\Td}B_{\eps}[u_{\eps}]u_{\eps}\Delta\varphi\diff x\diff t -\int_{0}^{T}\int_{\Td}u_{\eps}F''(u_{\eps})\nabla u_{\eps}\cdot\nabla\varphi\diff x\diff t\\
&=:I_{1}+I_{2}+I_{3}. \phantom{\int_{0}^{T}\int_{\Td}}
\end{split}
\end{equation}

\underline{\textit{Step 1: Compactness}}. Using Lemma \ref{lem:uniform_est_just_eps} and Lemma \ref{lem:compactness-FK-tx} we can choose a subsequence of $\{\ueps\}_{\varepsilon}$ such that 
\begin{enumerate}[label=(D\arabic*)]
    \item $\partial_t \ueps \rightharpoonup \partial_t u$ weakly in $L^2(0,T; W^{-1,s'}(\Td))$,
    \item $\ueps \to u$ strongly in $L^2((0,T)\times \Td)$,
    \item $\nabla \ueps \to \nabla u$ strongly in $L^2((0,T)\times \Td)$,
    \item $\sqrt{F_{1}''(u_\eps)}\nabla u_\eps \rightharpoonup \xi$ weakly in $L^2((0,T)\times \Td)$.
\end{enumerate}
\underline{\textit{Step 1: Convergence of $I_{1}$}}. Using~\ref{propS_nonneg} in Lemma \ref{lem:S_properties} we write
\begin{align*}
I_{1}&=\int_{0}^{T}\int_{\Td}\int_{\Td}S_{\eps}[u_{\eps}]S_{\eps}(\nabla u_{\eps}\cdot\nabla\varphi)\diff x\diff y\diff t\\
&=\int_{0}^{T}\int_{\Td}\int_{\Td}S_{\eps}[u_{\eps}]S_{\eps}(\nabla u_{\eps})\cdot\nabla\varphi\diff x\diff y\diff t+\int_{0}^{T}\int_{\Td}\int_{\Td}S_{\eps}[u_{\eps}]\nabla u_{\eps}\cdot S_{\eps}[\nabla\varphi]\diff x\diff y\diff t+R_{\eps}\\
&=J_{1}^{(1)}+J_{2}^{(1)}+R_{\eps}^{(1)},
\end{align*}
where $R_{\eps}^{(1)}$ is defined as 
\begin{equation*}
 R_{\eps}^{(1)}= \int_{0}^{T}\int_{\Td}\int_{\Td}S_{\eps}[u_{\eps}]\left(S_{\eps}(\nabla u_{\eps}\cdot\nabla\varphi)-S_{\eps}(\nabla u_{\eps})\cdot\nabla\varphi-\nabla u_{\eps}\cdot S_{\eps}[\nabla\varphi]\right)\diff x\diff y \diff t.
\end{equation*}
For $J_{1}^{(1)}$ we use identity
$$
S_{\eps}[u_{\eps}] \, S_{\eps}(\nabla u_{\eps}) =
 S_{\eps}[u_{\eps}] \, \nabla S_{\eps}(u_{\eps}) =
\frac{1}{2} \nabla \left|S_{\eps}[u_{\eps}]\right|^2
$$
so after integration by parts we obtain
\begin{equation}\label{eq:convJ1}
J_{1}^{(1)}=-\f{1}{2} \int_{0}^{T}\int_{\Td}\int_{\Td}(S_{\eps}[u_{\eps}])^{2}\Delta\varphi\diff x\diff y\diff t  \to -\f{D}{2}\int_{0}^{T}\int_{\Td}|\nabla u|^{2}\Delta\varphi\diff x\diff t 
\end{equation}
due to \ref{propS:conv_to_energy} in Lemma \ref{lem:S_properties}. For $J_2^{(1)}$ we change variables to have
\begin{align*}
J_2^{(1)} &=\f 1 2 \int_0^T \int_{\Td}\int_{\Td} {\omega_\eps}(y)\, \frac{u_{\eps}(x-y) - u_{\eps}(x)}{\varepsilon} \nabla u_{\eps}(x) \cdot \frac{\nabla\varphi(x-y) - \nabla\varphi(x)}{\eps} \diff x\diff y \diff t = \\
& =\f 1 2 \int_{\Td}{\omega}(y)  \int_0^T \int_{\Td}  \frac{u_{\eps}(x-\eps y) - u_{\eps}(x)}{\varepsilon} \nabla u_{\eps}(x) \cdot \frac{\nabla\varphi(x-\eps y) - \nabla\varphi(x)}{\eps} \diff x \diff t\diff y.
\end{align*}
We are first concerned with the inner integral. With Lemma~\ref{lem:diff_quot_strong_conv} we have that for fixed $y \in \Td$
$$
\frac{u_{\eps}(x-\eps y) - u_{\eps}(x)}{\varepsilon} \to - \nabla u(x) \cdot y \mbox{ in } L^2((0,T)\times \Td).
$$
Moreover, a Taylor expansion implies that
$$
\frac{\nabla\varphi(x-\eps y) - \nabla\varphi(x)}{\varepsilon} \to - D^{2}\varphi(x) y \mbox{ in } L^\infty((0,T)\times \Td;\R^{d}).
$$
Combining this with a strong convergence $\nabla u_{\varepsilon} \to \nabla u$ in $L^2((0,T)\times \Td)$, we deduce
\begin{multline*}
\int_0^T \int_{\Td}  \frac{u_{\eps}(x-\eps y) - u_{\eps}(x)}{\varepsilon} \nabla u_{\eps}(x) \cdot \frac{\nabla\varphi(x-\eps y) - \nabla\varphi(x)}{\eps} \diff x \diff t \to \\
\to \int_0^T \int_{\Td}
\nabla u(x) \cdot y \, \nabla u(x) \cdot (D^2 \varphi(x) y) \diff x \diff t.
\end{multline*}
Finally, we apply the dominated convergence theorem to the integral with respect to $y$ with the dominating function $\norm{D^{2}\varphi}_{\infty}\sup_{\eps}\norm{\nabla u_{\eps}}_{2}^{2} |y|^{2}$. We obtain
\begin{equation}\label{eq:convJ2}
\begin{split}
J_2^{(1)} \to \f 1 2 \int_{\Td} \omega(y) |y|^2 \diff y \int_0^T \int_{\Td} \nabla u(x) &\cdot D^2 \varphi(x) \nabla u(x) \diff x \diff t= \\
&=D\int_0^T \int_{\Td}(\nabla u(x)\otimes \nabla u(x)) : D^2 \varphi(x) \diff x \diff t,
\end{split}
\end{equation}
where we also used the symmetry of $D^{2}\varphi$ and properties of $\omega$ defined in~\eqref{as:omega}. It remains to deal with the error term. Using \ref{propS_product_rule} in Lemma \ref{lem:S_properties} we can write 
\begin{equation*}
R_{\eps}^{(1)}=\int_{0}^{T}\int_{\Td}\int_{\Td}S_{\eps}[u_\eps]\f{\sqrt{\omega_{\eps}(y)}}{\sqrt{2}\eps}[(\nabla u_{\eps}(x-y)-\nabla u_{\eps}(x))\cdot(\nabla\varphi(x-y)-\nabla\varphi(x))]\diff x\diff y\diff t.    
\end{equation*}
We want to prove that $R_{\eps}^{(1)}$ converges to 0. By Cauchy-Schwarz inequality (in time and space) as well as bounds on $S_{\eps}[u_{\eps}]$ it remains to prove that
\begin{equation}\label{eq:convReps}
\int_{0}^{T}\int_{\Td} \int_{\Td}\f{\omega_{\eps}(y)}{\eps^2}|\nabla u_{\eps}(x-y)-\nabla u_{\eps}(x)|^{2}|\nabla\varphi(x-y)-\nabla\varphi(x)|^{2}\diff y \diff x\diff t\to 0.     
\end{equation}
Using Taylor's expansion we can estimate this integral with
\begin{equation*}
\eps\norm{D^{2}\varphi}_{L^{\infty}}\left(\int_{0}^{T}\int_{\Td}\int_{\Td}\f{\omega_{\eps}(y)}{\eps^2}|\nabla u_{\eps}(x-y)-\nabla u_{\eps}(x)|^{2} \diff y\diff x\diff t\right) 
\end{equation*}
which converges to zero by the bound from the entropy \eqref{eq:entropy} so that \eqref{eq:convReps} follows. We conclude that
\begin{equation*}
I_{1}\to -\f{D}{2}\int_{0}^{T}\int_{\Td}|\nabla u|^{2}\Delta\varphi\diff x\diff t+D\int_0^T \int_{\Td}(\nabla u\otimes \nabla u) : D^2 \varphi \diff x \diff t.  
\end{equation*}

\underline{\textit{Step 2: Convergence of $I_{2}$}}. We observe that the only differences between $I_1$ and $I_2$ are $u_{\varepsilon}$ and $\Delta \varphi$ in place of $\nabla u_{\varepsilon}$ and $\nabla \varphi$ respectively. As we have the same (in fact, better) estimates for these quantities, the proof is the same and we conclude 
\begin{equation*}
I_{2}\to D \int_{0}^{T}\int_{\Td}|\nabla u|^{2}\Delta\varphi\diff x\diff t+D\int_0^T \int_{\Td}u\, \nabla u\cdot \nabla\Delta\varphi.  
\end{equation*}

\underline{\textit{Step 3: Convergence of $I_{3}$}}. For $I_{3}$ the proof is similar to the reasoning in Steps 1, 3 and 6 of the proof of Theorem~\ref{thm:weaksoldelta}because we have to use the same estimates. Roughly speaking, one proves that $\ueps \to u$ strongly in $L^{k}((0,T)\times \Td)$ by interpolation so that one can identify $\xi = \sqrt{F_1''(u)} \nabla u$. Next, convergence in $L^{k}((0,T)\times \Td)$ allows also to prove strong convergence $u_{\varepsilon} \sqrt{F_1''(u_{\varepsilon})} \to u \sqrt{F_1''(u)}$ in $L^2((0,T)\times\Td)$ thanks to growth condition \ref{assumption_pot_3} while the convergence $u_{\varepsilon} \sqrt{F_2''(u_{\varepsilon})} \to u \sqrt{F_2''(u)}$ in $L^2((0,T)\times\Td)$ is trivial because $F_2''\in L^{\infty}$. This shows that 
\begin{equation*}
I_{3}\to -\int_{0}^{T}\int_{\Td}u F''(u)\nabla u\cdot\nabla\varphi\diff x\diff t.
\end{equation*}
\underline{\textit{Conclusion of Steps 1-3.}} In the limit $\varepsilon \to 0$ we obtain
\begin{multline*}
\int_{0}^{T}\langle\p_{t}u,\varphi\rangle_{(W^{-1,s'}(\Td),W^{1,s}(\Td))} = D\int_0^T \int_{\Td}(\nabla u\otimes \nabla u) : D^2 \varphi +
\\
+  \f{D}{2}\int_{0}^{T}\int_{\Td}|\nabla u|^{2}\Delta\varphi
+D\int_0^T \int_{\Td}u\,\nabla u\cdot \nabla\Delta\varphi -\int_{0}^{T}\int_{\Td}uF''(u)\nabla u\cdot\nabla\varphi.     
\end{multline*}
\underline{\textit{Step 4: Regularity of $u$ and better weak formulation}}
Now we  prove the regularity of the limit function $u$. This allows us to perform integration by parts on the different terms using the formula~\eqref{integration_by_parts} and recover the Definition~\ref{def:weak_sol_limit}. In fact, in the limit $\varepsilon \to 0$, from the entropy we obtain (see~\cite[Theorem 4]{bourgain2001another} and~\cite[Theorem 1.2]{MR2041005}) 
	$$
	\sum_{i,j=1}^d \int_0^t \int_{\Td} |\partial_{x_i} \partial_{x_j} u|^2\le \liminf_{\eps\to 0}	\frac{1}{4\eps^{2}}\int_{0}^{t}\int_{\Td} \int_{\Td} \omega_{\varepsilon}(y) \, |\nabla u_\eps(x) - \nabla u_\eps(x-y)|^2  
	$$
so in the limit $\varepsilon \to 0$ we gain one more derivative. Then, since
$$
D\int_0^T \int_{\Td}u \nabla u \cdot \nabla\Delta\varphi = - D \int_0^T \int_{\Td} \Delta \varphi \, |\nabla u|^2 - D \int_0^T \int_{\Td} u\,\Delta u\, \Delta \varphi
$$
and using formula ~\eqref{integration_by_parts}, we compute
\begin{align*}
I_1 + I_2 &= 
D\int_0^T \int_{\Td}(\nabla u\otimes \nabla u) : D^2 \varphi \, - \f{D}{2}\int_{0}^{T}\int_{\Td}|\nabla u|^{2}\Delta\varphi - D\int_{0}^{T}\int_{\Td} u\, \Delta u\,\Delta\varphi\\
&= -D\int_{0}^{T}\int_{\Td} \Delta u\, \nabla u \cdot \nabla \varphi -D\int_{0}^{T}\int_{\Td} u\, \Delta u\,\Delta\varphi.
\end{align*}
This ends the proof of Theorem~\ref{thm:final}.
\end{proof}

\appendix
\section{Results from classical analysis}
\subsection{Difference quotients}
\begin{lem}\label{lem:diff_quot_strong_conv}
Let $\{\ueps\}$ be a sequence strongly compact in $L^2(0,T; H^1(\Td))$. Then, for fixed $y \in \Td$,
$$
\frac{u_{\eps}(t,x-\eps y) - u_{\eps}(t,x)}{\varepsilon} \to - \nabla u(t,x) \cdot y \mbox{ strongly in } L^2((0,T)\times \Td).
$$
\end{lem}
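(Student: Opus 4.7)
The plan is to represent the difference quotient as a line integral of $\nabla u_\eps$ along the segment connecting $x-\eps y$ and $x$, and then exploit the strong compactness in $L^2(0,T;H^1(\Td))$ together with continuity of translations in $L^2$. Since the sequence is strongly compact, let $u$ denote its (strong) limit along the extracted subsequence, so that $u_\eps \to u$ and $\nabla u_\eps \to \nabla u$ strongly in $L^2((0,T)\times \Td)$.

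First I would establish the identity
\begin{equation*}
\frac{u_\eps(t,x-\eps y) - u_\eps(t,x)}{\eps} \;=\; -\, y \cdot \int_0^1 \nabla u_\eps(t,x-s\eps y)\,\diff s,
\end{equation*}
which holds pointwise a.e.\ for smooth functions via the fundamental theorem of calculus, and extends to $u_\eps(t,\cdot) \in H^1(\Td)$ by density (integrating first in $t$ to obtain the identity in $L^2$). Subtracting $-\nabla u(t,x) \cdot y$ from both sides yields
\begin{equation*}
\frac{u_\eps(t,x-\eps y) - u_\eps(t,x)}{\eps} + \nabla u(t,x)\cdot y \;=\; -\,y \cdot \int_0^1 \bigl[\nabla u_\eps(t,x-s\eps y) - \nabla u(t,x)\bigr]\diff s.
\end{equation*}

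Next I would take the $L^2((0,T)\times \Td)$-norm of the right-hand side and apply Minkowski's integral inequality to pull the $s$-integral outside:
\begin{equation*}
\left\lVert\frac{u_\eps(\cdot,\cdot-\eps y) - u_\eps}{\eps} + \nabla u\cdot y\right\rVert_{L^2((0,T)\times \Td)}
\;\leq\; |y|\int_0^1 \bigl\lVert \nabla u_\eps(\cdot,\cdot - s\eps y) - \nabla u \bigr\rVert_{L^2((0,T)\times \Td)}\,\diff s.
\end{equation*}
Splitting each integrand by triangle inequality,
\begin{equation*}
\lVert \nabla u_\eps(\cdot,\cdot - s\eps y) - \nabla u\rVert_{L^2}
\;\leq\; \lVert \nabla u_\eps - \nabla u \rVert_{L^2}
+ \lVert \nabla u(\cdot,\cdot - s\eps y) - \nabla u \rVert_{L^2},
\end{equation*}
where the first term uses translation invariance of the Lebesgue measure on $\Td$. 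The first term vanishes uniformly in $s$ as $\eps \to 0$ by strong convergence, while the second vanishes by continuity of translations in $L^2$ (since $s\eps y \to 0$).

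Finally I would invoke the dominated convergence theorem in the variable $s \in [0,1]$: the integrands converge pointwise to $0$ and are bounded by the integrable (in fact constant) majorant $2\,|y|\sup_\eps \lVert \nabla u_\eps \rVert_{L^2((0,T)\times \Td)}$, which is finite because $\{\nabla u_\eps\}$ is strongly convergent and hence bounded. This closes the argument. The only mildly delicate step is justifying the line-integral representation for $H^1$ functions, handled by approximating $u_\eps(t,\cdot)$ by smooth functions in $H^1(\Td)$ and passing to the limit in $L^2$; everything else is a routine combination of Minkowski's inequality, strong convergence, and $L^2$-continuity of translations.
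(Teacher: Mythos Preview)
Your proof is correct and follows essentially the same route as the paper: both represent the difference quotient via the line integral $-y\cdot\int_0^1 \nabla u_\eps(t,x-s\eps y)\,\diff s$ and then control the resulting translation error in $L^2$. The only cosmetic difference is in the splitting: the paper compares $\nabla u_\eps(\cdot-s\eps y)$ against $\nabla u_\eps$ and invokes the Fr\'echet--Kolmogorov theorem (equicontinuity of translations for a compact family), whereas you compare against the fixed limit $\nabla u$ and use strong convergence plus continuity of translations for a single $L^2$ function---arguably a touch more elementary, but the same idea.
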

\begin{proof}
We write 
\begin{align*}
\frac{u_{\eps}(t,x-\eps y) - u_{\eps}(t,x)}{\varepsilon}&=-y\cdot\int_{0}^{1}\nabla u_{\eps}(t,x-\eps\theta y) \diff\theta \\
&=-y\cdot\int_{0}^{1} \left(\nabla u_{\eps}(t,x-\eps\theta y)-\nabla u_{\eps}(t,x)\right)\diff\theta - y\cdot \nabla u_{\eps}(t,x).
\end{align*}
By assumption $y\cdot \nabla u_{\eps}\to y\cdot \nabla u$ strongly in $L^{2}((0,T)\times \Td)$ so we only have to prove that the first term on the (RHS) converges to 0. By Fubini's theorem and Cauchy-Schwarz inequality
\begin{align*}
\int_{0}^{T}&\int_{\Td}\Big|\int_{0}^{1}\left(\nabla u_{\eps}(t,x-\eps\theta y)-\nabla u_{\eps}(t,x)\right)\diff\theta\Big|^{2}\diff x\diff t\\ 
&\le C \int_{0}^{1}\int_{0}^{T}\int_{\Td}|\nabla u_{\eps}(t,x-\eps\theta y)-\nabla u_{\eps}(t,x)|^{2}\diff x\diff t\diff \theta =C\int_{0}^{1}\norm{\tau_{\eps\theta y}\nabla u_{\eps}-\nabla u_{\eps}}_{L^{2}((0,T)\times \Td)}^{2}\diff \theta,
\end{align*}
where $\tau$ is the translation operator. The last term converges to 0 when $\eps\to 0$ by the Fréchet Kolmogorov theorem. 
\end{proof}

\subsection{Growth estimates on mollified nonlinearity}
\begin{lem}\label{lem:F_mollification_estimates}
Let $F$ satisfies Assumption \ref{ass:potentialF} with constants $C_{1}$, ..., $C_{10}$. Then, $F_{\delta} = F \ast \eta_{\delta}$ with $0 \leq \delta \leq 1$ satisfies Assumption \ref{ass:potentialF} with constants
\begin{align*}
&\widetilde{C_{1}} = 2^{1-k} C_{1},  \qquad  &&\widetilde{C_{2}} = C_1 + C_2,
\qquad &&\widetilde{C_{3}} = 2^{k-1}\,{C_{3}},  \qquad  &&\widetilde{C_{4}} = \widetilde{C_{3}} +  C_{4},\\
&\widetilde{C_{5}} = \min(2^{3-k},1) \, C_{5}, \qquad &&\widetilde{C_{6}} = C_{5} + C_6,
\qquad 
&&\widetilde{C_{7}} = \max(2^{k-3},1)\, C_{7}, \qquad &&\widetilde{C_{8}} = \widetilde{C_{7}} + C_{8}.\\
&\widetilde{C_{9}} = C_9 + 2\,C_{10}, \qquad &&\widetilde{C_{10}} = 2\,C_{10}. && &&
\end{align*}
\end{lem}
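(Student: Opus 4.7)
The plan is to decompose the mollified potential as $F_\delta = F_{1,\delta} + F_{2,\delta}$ where $F_{i,\delta} := F_i \ast \eta_\delta$, and to verify each item of Assumption~\ref{ass:potentialF} separately. Smoothness in \ref{assumption_pot_1} is immediate since $\eta_\delta \in C^\infty_c$, and convexity of $F_{1,\delta}$ (if $F_1 \neq 0$) follows because convolution with the nonnegative kernel $\eta_\delta$ preserves convexity. The only real work is bookkeeping of the constants in the growth bounds, which reduces to elementary manipulations of $|u \pm z|^p$ for $|z| \leq \delta \leq 1$.

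The key elementary fact I will use repeatedly is the two-sided inequality
\[
\min(1,2^{1-p})\,(a+b)^p \;\leq\; a^p + b^p \;\leq\; \max(1,2^{p-1})\,(a+b)^p, \qquad a,b \geq 0, \ p \geq 0,
\]
where the direction depends on whether $p \leq 1$ or $p \geq 1$. Applied once with $a = |u-z|$, $b = |z|$ (so $|u| \leq a + b$) and once with $a = |u|$, $b = |z|$, together with $|z|^p \leq 1$ for $|z| \leq \delta \leq 1$, this produces the sandwich
\[
\min(1, 2^{1-p}) |u|^p - 1 \;\leq\; |u-z|^p \;\leq\; \max(1, 2^{p-1})(|u|^p + 1),
\]
for every $p \geq 0$ and $|z| \leq 1$. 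Integrating against $\eta_\delta$ and using $\int \eta_\delta = 1$, this transfers \ref{assumption_pot_3} from $F_1$ to $F_{1,\delta}$, with $p = k$ for the bounds on $F_{1,\delta}$ itself (yielding $\widetilde{C_1} = 2^{1-k} C_1$, $\widetilde{C_2} = C_1 + C_2$, $\widetilde{C_3} = 2^{k-1} C_3$, $\widetilde{C_4} = \widetilde{C_3} + C_4$) and with $p = k-2$ for the bounds on $F_{1,\delta}'' = F_1'' \ast \eta_\delta$ (yielding $\widetilde{C_5} = \min(2^{3-k},1) C_5$, $\widetilde{C_6} = C_5 + C_6$, $\widetilde{C_7} = \max(2^{k-3},1) C_7$, $\widetilde{C_8} = \widetilde{C_7} + C_8$). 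Note that the case split in the constants for the $F_{1}''$-bounds corresponds exactly to whether $k-2 \geq 1$ or $k-2 \leq 1$.

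For the $F_2$ part, the bound $\|F_{2,\delta}''\|_\infty \leq \|F_2''\|_\infty$ is trivial since convolution with a probability kernel does not increase the sup-norm. For the lower bound on $F_{2,\delta}$ itself, I will use the pointwise estimate $(u-z)^2 \leq 2u^2 + 2z^2 \leq 2u^2 + 2$ for $|z| \leq 1$, which upon integrating against $\eta_\delta$ gives
\[
F_{2,\delta}(u) \;\geq\; \int \bigl(-C_9 - C_{10}(u-z)^2\bigr)\,\eta_\delta(z)\,\diff z \;\geq\; -(C_9 + 2C_{10}) - 2C_{10}\, u^2,
\]
so $\widetilde{C_9} = C_9 + 2C_{10}$ and $\widetilde{C_{10}} = 2 C_{10}$, as claimed.

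There is no real obstacle here; the only point requiring a little care is checking that $\widetilde{C_{10}}$ is still admissible in the sense of \ref{assumption_pot_4}, but this is not part of the claim of the lemma and is already guaranteed by the strengthened hypothesis $4\,C_{10} < C_p$ that the authors impose (since then $4\,\widetilde{C_{10}} = 8\,C_{10} < 2 C_p$, whereas the weaker condition $2\,\widetilde{C_{10}} < C_p$ needed for Theorem~\ref{thm:estim_reg} amounts to $4\,C_{10} < C_p$).
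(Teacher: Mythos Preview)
Your proof is correct and follows essentially the same route as the paper: decompose $F_\delta$ into the mollifications of $F_1$ and $F_2$, then transfer the growth bounds using the elementary two-sided estimates on $|u-z|^p$ for $|z|\le 1$ and integrate against $\eta_\delta$. Your additional remarks on preservation of convexity and on the admissibility of $\widetilde{C_{10}}$ are helpful context that the paper handles elsewhere.
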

\begin{proof}
We decompose $F_{\delta,1} = F_1 \ast \eta_{\delta}$ and $F_{\delta,2} = F_2 \ast \eta_{\delta}$. Suppose that $F_1(u) \leq C_{3}|u|^{k} + C_{4}$. Then,
$$
F_{\delta,1}(u) = \int_{\R} F_1(u-s)\, \eta_{\delta}(s) \diff s \leq 
C_{3} \, \int_{\R} |u-s|^{k} \eta_{\delta}(s) \diff s + C_{4} \leq 
2^{k-1} C_{3} |u|^{k} + 2^{k-1} C_{3} + C_{4}
$$
where we used inequality valid for $p \geq 0$
\begin{equation}\label{eq:holder_bound_u-s}
|u-s|^{p} \leq \max(1,2^{p-1})\,(|u|^{p} + |s|^{p}).
\end{equation}
It follows that $\widetilde{C_{3}} = 2^{k-1} C_{3}$ and $\widetilde{C_{4}} = 2^{k-1} C_{3} + C_{4}$. In a similar way, we compute constants $\widetilde{C_{7}}$, $\widetilde{C_{8}}$. For $\widetilde{C_{1}}, \widetilde{C_{2}}$, $\widetilde{C_{5}}, \widetilde{C_{6}}$ the reasoning is the same but we have to use a lower bound of the form
$$
|u - s|^p  \geq \min(1, 2^{1-p}) \, |u|^p - |s|^p.
$$
so that, for example, if $F_1 \geq C_1 |u|^k - C_2$ we have
$$
F_{\delta,1}(u) = \int_{\R} F_1(u-s)\, \eta_{\delta}(s) \diff s \geq 
C_{1} \int_{\R} |u-s|^k \eta_{\delta}(s) \diff s - C_{2} \geq 
2^{1-k} \, C_{1} \, |u|^p - C_{1} - C_2.
$$
For the constants $\widetilde{C_9}, \widetilde{C_{10}}$ we argue using \eqref{eq:holder_bound_u-s} once again
$$
F_{\delta,2}(u) 
\geq - C_{9} -C_{10} \, \int_{\R} |u-s|^{2} \eta_{\delta}(s) \diff s
\geq -C_9 - 2\,C_{10} - 2\,C_{10}\,|u|^2.
$$
\end{proof}

\subsection{Potentials satisfying Assumption \ref{ass:potentialF}}
\begin{lem}\label{lem:potentials_satisfying_ass}
Let $F$ be as in \ref{ex_general_F} in Example \ref{ex:potentials}. Then, $F$ satisfies Assumption \ref{ass:potentialF}.
\end{lem}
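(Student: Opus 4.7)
The strategy is to produce a decomposition $F = F_1 + F_2$ by adding and subtracting a smooth, compactly supported correction $h$: set $F_1 := F + h$ and $F_2 := -h$, where $h$ will be chosen so that $F_1$ becomes globally convex while coinciding with $F$ outside a bounded interval. As a preliminary observation, I note that the two-sided bounds on $F$ and $F''$, stated in \ref{ex_general_F} only for $u \in \R \setminus I$, extend automatically to global bounds with possibly enlarged constants, since $F$ and $F''$ are continuous---and hence bounded---on the bounded interval $I$, while $|u|^k$ and $|u|^{k-2}$ are bounded on $I$ as well (recall $k \geq 2$).

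Fix $R > 0$ with $I \subset (-R,R)$, so that by hypothesis $F''(u) \geq a > 0$ for $|u| \geq R$, and set $M := \sup_{|u| \leq R} |F''(u)| < \infty$. The core step is to construct, for some $R' > R$, a function $h \in C^2(\R)$ supported in $[-R', R']$ such that $h''(u) \geq M$ on $[-R,R]$, $h''(u) \geq -a$ on the annulus $[-R',-R] \cup [R,R']$, and the two moment identities $\int_{-R'}^{R'} h''(u) \diff u = 0$ and $\int_{-R'}^{R'} u\, h''(u) \diff u = 0$ hold. These identities are precisely what is needed so that $h$ and $h'$ vanish at $\pm R'$, ensuring that $h$ extends smoothly by zero to all of $\R$. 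Explicitly, I would take $h''$ to be piecewise constant on the three subintervals $[-R',-R]$, $[-R,R]$, $[R,R']$, with the two values on the annular pieces determined by the $2\times 2$ linear system encoding the moment constraints, then mollify to obtain a smooth function; the constraint $h'' \geq -a$ on the annulus then forces $R'$ to be sufficiently large (roughly $R' \gtrsim R(1 + M/a)$), which is always achievable.

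With $h$ in hand, $F_1 := F+h$ is $C^2$ and globally convex, since $F_1'' = F'' + h'' \geq 0$ by case analysis: on $[-R,R]$ via $h'' \geq M \geq -F''$, on the annulus via $F'' + h'' \geq a - a = 0$, and outside $[-R',R']$ via $h'' = 0$ and $F'' \geq a$. The $k$-growth bounds on $F_1$ and $F_1''$ hold for $|u| \geq R'$ because $F_1 = F$ there, and extend globally after adjusting constants to absorb the bounded perturbation on the compact set $[-R',R']$. Meanwhile $F_2 = -h$ is $C^2$ with compact support, so $\|F_2''\|_\infty = \|h''\|_\infty < \infty$, and $|F_2| \leq \|h\|_\infty =: C_9$ gives the required lower bound $F_2(u) \geq -C_9 - C_{10}\, u^2$ trivially with $C_{10} = 0 < C_p/4$.

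The main obstacle is the construction of $h$: it reduces to a small moment-matching problem on a finite-dimensional family of candidates $h''$ under pointwise lower bounds, solvable by elementary linear algebra once $R'$ is sufficiently large. The remaining verifications of the $k$-growth of $F_1$ and of the lower bound on $F_2$ are routine bookkeeping.
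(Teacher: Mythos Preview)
Your proposal is correct and produces exactly the same structural decomposition as the paper --- $F_1$ agreeing with $F$ outside a bounded interval and $F_2$ compactly supported --- but the construction of $F_1$ is genuinely different. The paper invokes a $C^2$ convexity-preserving extension theorem (Yan, 2012) to extend $F|_{\R\setminus I}$ to a globally convex $C^2$ function $F_1$, then sets $F_2 = F - F_1$ on $I$ and $0$ elsewhere. You instead build the correction $h$ by hand, solving a two-moment problem for $h''$ to force $h$ to be $C^2$ and compactly supported, then taking $F_1 = F+h$, $F_2 = -h$. Your route is more elementary and self-contained, at the cost of the explicit bump construction (which, as you note, needs a small cushion in the mollification step so that $h''\ge M$ persists on $I$ and $h''\ge -a$ persists on the annulus; this is harmless since $I$ is strictly contained in $(-R,R)$ and $R'$ can be enlarged). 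The paper's route is shorter but imports a nontrivial external result. Both give $C_{10}=0$, so the smallness constraint in \ref{assumption_pot_4} is trivially satisfied.
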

\begin{proof}
On $\R \setminus I$ we define $F_1(u) = F(u)$. By \cite[Theorem 3.2]{yan2012extension}, there exists a $C^2$ extension of $F_1$ to $\R$ denoted by $F_1$ which preserves convexity, i.e. $F_1''(u)>b>0$ for some $b>0$. Moreover, $F_1$ has $k$-growth on $\R$ (in fact, by continuity, the behaviour of $F_1$ on $I$ can be included in constants $C_{2}$, $C_4$, $C_6$, $C_{8}$ in Assumption \ref{ass:potentialF}). We finally define
$$
F_2 = \begin{cases}
F(u) - F_1(u) &\mbox{ on } I,\\
0 &\mbox{ on } \R \setminus I.
\end{cases}
$$
Function $F_2$ is $C^2$ because at the endpoints of interval $I$ we have $F'' = F_1''$ as $F_1$ is $C^2$ extension of $F$. Finally, $F_2$ satisfies condition \ref{assumption_pot_4} in Assumption \ref{ass:potentialF} with $F_2(u) \geq - \|F_2\|_{\infty}$.
\end{proof}

\subsection{Proof of Lemma \ref{lem:properties_phi_delta}}\label{subsect:proof_phi_delta}
\begin{proof}
First, we note the formula which will be useful
$$
\phi(x) = \int_1^x \int_1^y \frac{1}{z} \diff z \diff y.
$$
Now, we proceed to the proof. First, (1) follows from the definition. Next, (2) 
follows from writing 
\begin{equation}\label{eq:formula_for_phi_delta}
\phi_{\delta}(x) = \int_\R \int_\R \frac{1}{T_{\delta}(z)} \mbox{sgn}(y-1) \, \mbox{sgn}(x-1) \, \mathds{1}_{y \in [1,x]} \, \mathds{1}_{z \in [1,y]} \diff z \diff y,
\end{equation}
and dominated convergence (for fixed $x>0$). Then, (3) follows from $T_{\delta} \geq 0$ and the observation that $x \geq 1$, $x<1$ implies $y\geq 1$, $y<1$ respectively.\\

To see (4), we distinguish three cases.
\begin{itemize}
    \item When $x \geq \frac{1}{\delta}-1$, we split the integrals and use the estimate $T_{\delta}(x) \geq \frac{1}{\delta}-1$ so that
\begin{align*}
\phi_{\delta}(x) \leq \int_1^{\frac{1}{\delta}-1} \int_1^y \frac{1}{z} \diff z& \diff y +
\int_{\frac{1}{\delta}-1}^x \int_1^y \frac{1}{\frac{1}{\delta}-1} \diff z \diff y\leq 
\\ &\leq \phi\left(\frac{1}{\delta}-1 \right) +  \frac{\delta}{2(\delta-1)} x^2 \leq 
\phi(x) +  \frac{\delta}{\delta-1} (x-1)^2,
\end{align*}
because $\phi(x)$ is non-decreasing for $x \geq 1$.
\item When, $x \in \left(2\delta, \frac{1}{\delta}-1 \right)$ we have $\phi_{\delta} = \phi$ because on this set $T_{\delta}(z) = z$. 
\item When $x \in [0,2\delta]$ we have a lower bound $T_{\delta}(x) \geq \delta$ so that 
$$
\phi_{\delta}(x) \leq \int_{x}^{2\delta} \int_{y}^{1} \frac{1}{\delta} + \int_{2\delta}^1 \int_{y}^{1} \frac{1}{z} \diff z \leq 2 + \phi(2\delta) \leq 3 
$$
as $\phi(2\delta) \leq \phi(0) = 1$ because $\phi(x)$ is decreasing for $x \in (0,1)$. 
\end{itemize} 

Finally, to see (5), let $x < 0$. Then,
$$
\phi_{\delta}(x) \geq  \int_x^0 \int_y^0 \frac{1}{\delta} \diff z \diff y = \frac{1}{\delta} \int_x^0 -y \diff y = \frac{x^2}{2\delta}.
$$
\end{proof}

\section{Bourgain-Brézis-Mironescu and Ponce compactness result}
We upgrade here the result of \cite[Proposition 4.2]{MR2041005} and \cite[Theorem 4]{bourgain2001another} to the time-space setting. We consider sequence of radial functions $\{\rho_{\eps}\}$ such that $\rho_{\eps} \geq 0$, $\int_{\R^d} \rho_{\eps} = 1$ and 
$$
\lim_{\eps \to 0} \int_{|x| > \delta} \rho_{\eps}(x) \diff x = 0 \mbox{ for all } \delta > 0.
$$
For the formulation of the compactness result, we use another sequence $\{\varphi_{\delta}\}_{\delta \in (0,1)} \subset C_{c}^{\infty}(\R^{d})$ of standard mollifiers with mass~1 such that $\varphi_{\delta}(x)=\frac{1}{\delta^{d}}\varphi(\frac{x}{\delta})$ with $\varphi$ of mass 1 and compactly supported.

\begin{thm}\label{thm:ponce_tx}
Let $d \geq 2$. Let $\{f_\eps\}$ be a sequence bounded in $L^p((0,T)\times \Td)$. Suppose that there exists a sequence $\{\rho_{\eps}\}$ as above such that
\begin{equation}\label{eq:Ponce_org_condition}
\int_0^T \int_{\Td} \int_{\Td} \frac{|f_\eps(t,x) - f_\eps(t,y)|^p}{|x-y|^p} \rho_\eps(|x-y|) \diff x \diff y \diff t \leq C
\end{equation}
for some constant $C$. Then, $\{f_\eps\}$ is compact in space in $L^p((0,T)\times \Td)$, i.e.
\begin{equation}\label{eq:equicontinuity_Lp_space}
\lim_{\delta \to 0} \limsup_{\varepsilon \to 0} \int_0^T \int_{\Td} |f_\eps \ast \varphi_{\delta}(t,x) - f_\eps(t,x)|^p \diff x \diff t = 0.
\end{equation}
\end{thm}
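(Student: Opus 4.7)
The plan is to lift Ponce's proof of \cite[Proposition 4.2]{MR2041005} — established in the purely spatial setting — to the present space-time context by absorbing the time integral into the translation-difference functional at the very beginning, and then running Ponce's argument verbatim on what remains. Concretely, I would introduce
$$T_\eps(h) := \int_0^T \int_\Td |f_\eps(t, x+h) - f_\eps(t,x)|^p\, \diff x\, \diff t, \qquad h \in \R^d,$$
and aim to express both the quantity to be controlled and the hypothesis as statements about this single function of $h$ alone.

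The first step is then just Jensen's inequality and Fubini: applying Jensen pointwise in $(t,x)$ to the probability measure $\varphi_\delta(h)\, \diff h$ and integrating over $(0,T) \times \Td$ yields
$$\int_0^T \int_\Td |f_\eps \ast \varphi_\delta - f_\eps|^p \, \diff x\, \diff t \leq \int_{\R^d} \varphi_\delta(h) \, T_\eps(h)\, \diff h,$$
while the change of variable $y = x + h$ turns the hypothesis \eqref{eq:Ponce_org_condition} into
$$\int_{\R^d} \frac{T_\eps(h)}{|h|^p}\, \rho_\eps(|h|)\, \diff h \leq C.$$
Moreover $T_\eps$ is symmetric, $T_\eps(-h) = T_\eps(h)$, and uniformly bounded: $T_\eps(h) \leq 2^p \sup_\eps \|f_\eps\|_{L^p((0,T) \times \Td)}^p$. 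The second step is the key observation that Ponce's proof in \cite[Proposition 4.2]{MR2041005} uses only these three structural properties — symmetry, uniform $L^\infty$ bound, and the weighted $L^1$ bound against $\rho_\eps(|h|)/|h|^p$ — of the analogous translation-difference functional $h \mapsto \|g_\eps(\cdot+h) - g_\eps(\cdot)\|_{L^p(\Td)}^p$ in the single-function spatial setting. Substituting $T_\eps$ for that functional, and otherwise following Ponce step for step, would deliver
$$\lim_{\delta \to 0} \limsup_{\eps \to 0} \int_{\R^d} \varphi_\delta(h) \, T_\eps(h)\, \diff h = 0,$$
which, combined with the first displayed inequality, is precisely \eqref{eq:equicontinuity_Lp_space}.

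The genuine analytic difficulty is concealed inside Ponce's original argument rather than in the space-time lift itself: one must show that $\int \varphi_\delta(h)\, T_\eps(h)\, \diff h$ can be made arbitrarily small despite a potentially large mismatch between the profiles of the two mollifier families $\{\varphi_\delta\}$ and $\{\rho_\eps\}$, which Ponce handles by a truncation at a radius $r = r(\delta, \eps)$ exploiting the concentration of $\rho_\eps$ at the origin together with the boundedness of $\|f_\eps\|_{L^p}$ to kill the tail. The contribution of the present proof is then essentially bookkeeping: one verifies via Fubini that every manipulation carried out on $\R^d$ in Ponce's argument commutes with the outer $\int_0^T \diff t$, which is automatic since time enters only as a passive parameter in the definition of $T_\eps$.
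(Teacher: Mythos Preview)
Your proposal is correct and is exactly the paper's approach: package the time integral into a translation-difference functional (the paper's radial $F_\varepsilon(s)$ is the spherical average of your $T_\eps(h)$) and then run the BBM/Ponce spatial argument verbatim, time being a passive parameter. The only addendum is that beyond your three listed structural properties the argument also needs the doubling bound $T_\eps(2h)\le 2^p T_\eps(h)$ coming from the triangle inequality and translation invariance --- the paper makes this explicit as \eqref{eq:doubling_cond_F} --- but your $T_\eps$ inherits this immediately.
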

\begin{rem}\label{rem:adapt_ponce}
Let $\omega:\R^d \to \R$ be a smooth function, supported in the unit ball such that $\int_{\R^d} \omega(x) \diff x = 1$. Consider $\omega_{\eps} = \frac{1}{\eps^d} \omega\left(\frac{x}{\eps}\right)$. Suppose that
$$
\int_0^T \int_{\Td} \int_{\Td} \frac{|f_\eps(x) - f_\eps(y)|^p}{\eps^p} \omega_\eps(|x-y|) \diff x \diff y \diff t \leq \widetilde{C}.
$$
Then, \eqref{eq:Ponce_org_condition} is satisfied. Indeed, we consider
\begin{equation}\label{eq:new_rescaled_kernel}
\rho_{\eps}(x) = \frac{\omega_\eps(|x|)\, |x|^p}{\eps^p \, \int_{\R^d} \omega(y) |y|^p \diff y}
\end{equation}
so that \eqref{eq:Ponce_org_condition} holds true with $\frac{\widetilde{C}}{\int_{\R^d} \omega(y) |y|^p \diff y}$.
\end{rem}

\begin{proof}[Proof of Theorem \ref{thm:ponce_tx}] The result for sequences that do not depend on time has been obtained in \cite{bourgain2001another, MR2041005}. To demonstrate that it is sufficient to {\it integrate in time} the reasoning mentioned above, we make an additional assumption that for every $\eps$, $\rho_{\eps}$ is a nonincreasing function as in in~\cite[Theorem 4]{bourgain2001another}. For the general case, one has to proceed as in~\cite[Theorem 1.2]{MR2041005}.\\

\noindent We define
\begin{align*}
F_{\varepsilon}(s) &:= \int_0^T \int_{|y|=1} \int_{\Td} |f_{\varepsilon}(t,x+sy) - f_{\varepsilon}(t,x)|^p \diff x \diff y \diff t\\
 &= \frac{1}{s^{d-1}} \int_0^T \int_{|y|=s} \int_{\Td} |f_{\varepsilon}(t,x+y) - f_{\varepsilon}(t,x)|^p \diff x \diff y \diff t.
\end{align*}
By virtue of the computation above, we can express the assumption \ref{eq:Ponce_org_condition} using function $F_{\varepsilon}$ as follows
\begin{equation}\label{eq:ass_in_terms_of_Fe}
 \int_0^{\delta} s^{d-1} \frac{F_{\varepsilon}(s)\,\rho_{\varepsilon}(s)}{s^p} \diff s \leq C.
\end{equation}
Using the triangle inequality
$$
|f_{\varepsilon}(t,x+2sy) - f_{\varepsilon}(t,x)| \leq |f_{\varepsilon}(t,x+2sy) - f_{\varepsilon}(t,x + sy)| + |f_{\varepsilon}(t,x+sy) - f_{\varepsilon}(t,x)|
$$
and change of variables we obtain
\begin{equation}\label{eq:doubling_cond_F}
F_{\varepsilon}(2s) \leq 2^p F_{\varepsilon}(s), \qquad \frac{F_{\varepsilon}(2s)}{(2s)^p} \leq \frac{F_{\varepsilon}(s)}{s^p}.
\end{equation}
We estimate by Jensen's inequality
\begin{equation}\label{eq:estimate_conv_minus_f}
\begin{split}
\int_0^T \int_{\Td} &|f_{\varepsilon} \ast \varphi_{\delta} - f_{\varepsilon}|^p \diff x \diff t \leq \frac{C}{\delta^d} \int_0^T \int_{\Td} \int_{|x-y|\leq \delta} 
|f_{\varepsilon}(x) - f_{\varepsilon}(y)|^p
\diff y \diff x \diff t\\
&=\frac{C}{\delta^d} \int_0^T \int_{\Td} \int_{|h|\leq \delta} 
|f_{\varepsilon}(x+h) - f_{\varepsilon}(x)|^p
\diff h \diff x \diff t\\
&=\frac{C}{\delta^d} \int_0^T \int_{\Td}
\int_{0}^{\delta} s^{d-1} \int_{|h| = s}
|f_{\varepsilon}(x+h) - f_{\varepsilon}(x)|^p
\diff h \diff s \diff x \diff t = \frac{C}{\delta^d} \int_{0}^{\delta} s^{d-1} F_{\varepsilon}(s) \diff s.
\end{split}
\end{equation}
Now, we use functional inequality (which requires doubling condition \eqref{eq:doubling_cond_F}, cf. \cite[Eq. (24)]{bourgain2001another}) 
\begin{equation}\label{eq:funct_ineq_markov}
\delta^{-d} \int_0^{\delta} s^{d-1} \frac{F_{\varepsilon}(s)}{s^p} \diff s \leq C(d)\, \frac{ \int_0^{\delta} s^{d-1} \frac{F_{\varepsilon}(s)\,\rho_{\varepsilon}(s)}{s^p} \diff s}{\int_{|x| < \delta} \rho_{\varepsilon}(x) \diff x}
\end{equation}
For each $\delta>0$, there exists $\varepsilon(\delta)$ such that for all $\varepsilon < \varepsilon(\delta)$ we have
$\int_{|x| < \delta} \rho_{\varepsilon}(x) \diff x = 1$. In particular, for $\varepsilon < \varepsilon(\delta)$ we have by \eqref{eq:funct_ineq_markov} and \eqref{eq:ass_in_terms_of_Fe}
$$
\delta^{-d} \int_0^{\delta} s^{d-1} \frac{F_{\varepsilon}(s)}{s^p} \diff s \leq  C(d)\, \delta^{p}.
$$
In view of \eqref{eq:estimate_conv_minus_f}, the proof is concluded.

\end{proof}

\section{nonlocal Poincaré inequalities}

Let $\omega:\R^d \to \R$ be a smooth function, supported in the unit ball such that $\int_{\R^d} \omega(x) \diff x = 1$. Consider $\omega_{\eps} = \frac{1}{\eps^d} \omega\left(\frac{x}{\eps}\right)$.

\begin{lem}\label{lem:Poincare_with_average}
There exists $C_{p}$ and $\varepsilon_0^A$ such that 
\begin{equation*}
\int_{\Td}|f-(f)_{\Td}|^{2}\le \f{1}{4C_{p}}\int_{\Td} \int_{\Td} \frac{|f(t,x) - f(t,y)|^2}{\eps^2} \omega_\eps(|x-y|) \diff x \diff y     
\end{equation*}
for every $f\in L^{2}(\Td)$ and $\eps\le\eps_{0}^A$. 
\end{lem}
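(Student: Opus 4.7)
The natural plan is a standard contradiction-and-compactness argument, leveraging the Bourgain-Brézis-Mironescu/Ponce framework already invoked elsewhere in the paper. I would first assume that the inequality fails for every choice of $C_p$ and $\varepsilon_0^A$. Taking $C_p = 1/n$ and $\varepsilon_0^A = 1/n$ in the negation, a diagonal extraction produces sequences $\{f_n\} \subset L^2(\Td)$ and $\{\varepsilon_n\} \subset (0, 1/n]$ such that, after normalizing via $g_n := (f_n - (f_n)_{\Td})/\|f_n - (f_n)_{\Td}\|_{L^2(\Td)}$, one has $(g_n)_{\Td} = 0$, $\|g_n\|_{L^2(\Td)} = 1$, and
$$
I_n := \int_{\Td}\int_{\Td} \frac{|g_n(x) - g_n(y)|^2}{\varepsilon_n^2} \omega_{\varepsilon_n}(|x-y|) \diff x \diff y \longrightarrow 0.
$$

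Next I would extract a strongly convergent subsequence using BBM-Ponce compactness. Via the kernel rescaling of Remark \ref{rem:adapt_ponce}, setting $\rho_n(x) := \omega_{\varepsilon_n}(|x|)\,|x|^2 / \bigl(\varepsilon_n^2 \int \omega(y)\,|y|^2 \diff y\bigr)$ (a sequence of radial, mass-one mollifiers concentrating at the origin), the quantity $I_n$ takes the standard BBM form and the hypotheses of the space-only version of Theorem \ref{thm:ponce_tx} are verified. Combined with $\|g_n\|_{L^2(\Td)} = 1$, the Fr\'echet-Kolmogorov theorem then yields $g_n \to g$ strongly in $L^2(\Td)$ along a subsequence, with $(g)_{\Td} = 0$ and $\|g\|_{L^2(\Td)} = 1$.

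To close the argument I would invoke the BBM-Ponce lower-semicontinuity statement (\cite[Theorem 4]{bourgain2001another}, \cite[Theorem 1.2]{MR2041005}) already used in \eqref{eq:final_energy}: it gives $g \in H^1(\Td)$ together with
$$
C(d) \int_{\Td} |\nabla g|^2 \diff x \leq \liminf_{n \to \infty} I_n = 0.
$$
Hence $\nabla g = 0$, so $g$ is constant, and since $(g)_{\Td} = 0$ this forces $g \equiv 0$, contradicting $\|g\|_{L^2(\Td)} = 1$.

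The argument is entirely standard once the BBM framework is in place; the only minor obstacle I anticipate is the bookkeeping of the rescaled kernel $\rho_n$ so that both the compactness theorem and the liminf inequality apply directly to our $\omega_\varepsilon$, but this is precisely the content of Remark \ref{rem:adapt_ponce}. No additional machinery beyond what the paper already establishes is needed.
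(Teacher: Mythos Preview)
Your argument is correct. The paper itself does not supply a proof of this lemma but simply refers to \cite[Theorem~1.1]{MR2041005} (with the kernel rescaled as in Remark~\ref{rem:adapt_ponce}), so what you have written fills in the details that the paper delegates to the literature. Your contradiction-and-compactness scheme---normalize, extract a strong $L^2$ limit via the BBM/Ponce compactness criterion, then use the $\Gamma$-liminf inequality to force the limit to be constant---is in fact precisely the strategy Ponce employs to prove his Theorem~1.1, so your proof and the cited reference coincide in method; you are not taking a genuinely different route, only making explicit what the paper outsources.
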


For the proof, we refer to Ponce~\cite[Theorem 1.1]{MR2041005} with kernel given by \eqref{eq:new_rescaled_kernel}. We also have an opposite inequality from \cite[Theorem 1]{bourgain2001another}:

\begin{lem}\label{lem:inv_poincare_ineq} For all $f \in H^1(\Td)$
$$
\int_{\Td} \int_{\Td} \frac{|f(x)- f(y)|^2}{\varepsilon^2} \omega_{\varepsilon}(x-y) \diff x \diff y \leq C( \Td) \, \|f\|_{H^1(\Td)}^2.
$$
\end{lem}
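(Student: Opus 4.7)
The strategy is the classical Bourgain--Brézis--Mironescu argument: express the finite difference $f(x)-f(y)$ as an integral of the gradient along the segment joining $x$ and $y$, then exploit translation invariance on the torus together with the normalisation of $\omega$.

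By density of $C^\infty(\Td)$ in $H^1(\Td)$, it suffices to establish the bound for smooth $f$; the general case follows from Fatou's lemma applied along a smooth approximating sequence. Given smooth $f$, the plan is to write
$$
f(x)-f(y) = \int_0^1 \nabla f\bigl(y+t(x-y)\bigr)\cdot (x-y)\,\diff t
$$
and apply Jensen's (or Cauchy--Schwarz) inequality in the $t$ variable to obtain
$$
|f(x)-f(y)|^2 \leq |x-y|^2 \int_0^1 \bigl|\nabla f\bigl(y+t(x-y)\bigr)\bigr|^2 \diff t.
$$

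Next I would change variables $h := x-y$, so $x = y+h$, turning the left-hand side of the desired inequality into
$$
\int_{\Td}\int_{\Td} \frac{|h|^2}{\eps^2}\,\omega_\eps(h)\int_0^1 |\nabla f(y+th)|^2 \diff t \diff h \diff y.
$$
Using Fubini and the translation invariance of integration on the torus (the inner integral $\int_{\Td}|\nabla f(y+th)|^2 \diff y$ equals $\|\nabla f\|_{L^2(\Td)}^2$ for every fixed $t$ and $h$), this collapses to
$$
\|\nabla f\|_{L^2(\Td)}^2 \int_{\Td} \frac{|h|^2}{\eps^2}\,\omega_\eps(h)\,\diff h.
$$

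Finally, for $\eps$ smaller than the injectivity radius of $\Td$, the support of $\omega_\eps$ embeds faithfully into $\Td$, so after the rescaling $y = h/\eps$ the last integral becomes
$$
\int_{\R^d} |y|^2\,\omega(y)\,\diff y = 2D,
$$
which is finite by assumption \eqref{as:omega}. This yields the bound with $C(\Td) = 2D$. I do not anticipate any real obstacle; the only mild technical points are justifying the use of translation invariance on $\Td$ (immediate by periodicity) and handling $\eps$ that is not small, which one can either incorporate into the constant or restrict by writing the statement for $\eps < \eps_0$.
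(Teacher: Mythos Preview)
Your argument is correct and is precisely the standard proof of this upper bound; the paper does not spell out a proof at all but simply invokes \cite[Theorem~1]{bourgain2001another}, whose proof of the relevant inequality proceeds exactly by the integral representation of the finite difference along a segment, Jensen, and translation invariance, as you do. There is nothing to add.
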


Finally, we formulate a variant of Lemma \ref{lem:Poincare_with_average} which does not require an average on the left-hand side.

\begin{lem}\label{lem:poincare_nonlocal_H1_L2}
For each $\gamma \in (0,1)$ there exists ${\varepsilon}_0^B$ and constant $C(\gamma)$ such that for all $\varepsilon \in (0, {\varepsilon}_0^B)$ and all $f \in H^1(\Td)$ we have
$$
\| f\|^2_{H^1(\Td)} \leq \gamma \int_{\Td} \int_{\Td}  \frac{|\nabla f(x) - \nabla f(y)|^2}{\eps^2} \omega_\eps(|x-y|) \diff x \diff y + C(\gamma) \|f \|^2_{L^2(\Td)}.
$$
\end{lem}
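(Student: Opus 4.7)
The plan is to argue by contradiction, relying on the nonlocal Poincaré inequality of Lemma~\ref{lem:Poincare_with_average} together with the nonlocal compactness Theorem~\ref{thm:ponce_tx}. Suppose the inequality fails for some fixed $\gamma\in(0,1)$. Then there exist $\varepsilon_n\to 0$ and $f_n\in H^1(\Td)$ such that, writing
$$I_n:=\int_{\Td}\int_{\Td}\frac{|\nabla f_n(x)-\nabla f_n(y)|^2}{\varepsilon_n^2}\,\omega_{\varepsilon_n}(x-y)\,\diff x\,\diff y,$$
we have $\|f_n\|_{H^1(\Td)}^2 > \gamma I_n + n\,\|f_n\|_{L^2(\Td)}^2$. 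After normalising so that $\|f_n\|_{H^1(\Td)}=1$, this forces $I_n<1/\gamma$ and $\|f_n\|_{L^2(\Td)}\to 0$.

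First, I would use Lemma~\ref{lem:Poincare_with_average} applied componentwise to $\partial_i f_n$. Since every partial derivative has zero mean on the torus, this yields $\|\partial_i f_n\|_{L^2}^2\le I_n/(4C_p)$, and summing over $i$ gives $\|\nabla f_n\|_{L^2}^2\le I_n/(4C_p)\le 1/(4C_p\gamma)$. In particular $\{f_n\}$ is already bounded in $H^1(\Td)$ with a bound depending only on $\gamma$.

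The key step is then to upgrade this to strong $L^2$ compactness of $\{\nabla f_n\}$. The uniform bound $I_n\le 1/\gamma$ is, via Remark~\ref{rem:adapt_ponce}, exactly the hypothesis of the nonlocal compactness Theorem~\ref{thm:ponce_tx} for the vector-valued sequence $\{\nabla f_n\}$ in $L^2(\Td)$ (in its time-independent version, obtained either by viewing $\nabla f_n$ as constant in $t$ or by running the same proof without the time integration). Applying the theorem componentwise yields relative $L^2$ compactness of $\{\nabla f_n\}$. Combined with Rellich compactness of $\{f_n\}$ in $L^2(\Td)$, one extracts a subsequence converging strongly in $H^1(\Td)$ to some $f\in H^1(\Td)$. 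From $\|f_n\|_{L^2}\to 0$ we get $f\equiv 0$, contradicting $\|f_n\|_{H^1}=1$. The parameter $\varepsilon_0^B$ and the constant $C(\gamma)$ are then whatever values come out of this (non-constructive) argument.

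The main obstacle is the application of Theorem~\ref{thm:ponce_tx} in the purely spatial setting, since in the paper it is stated for space--time. This is essentially a cosmetic issue: the proof of Theorem~\ref{thm:ponce_tx} proceeds via spatial estimates that are uniform in $t$, so the time variable plays no role and the analogous $L^p(\Td)$ compactness statement holds with the same proof. A minor technical point is that $\nabla f_n$ is vector-valued, which is handled by applying Ponce's theorem to each component $\partial_i f_n$ separately and using $\sum_i|\partial_i f_n(x)-\partial_i f_n(y)|^2=|\nabla f_n(x)-\nabla f_n(y)|^2$ to transfer the hypothesis from $I_n$ to each scalar component.
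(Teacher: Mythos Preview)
Your proof is correct and follows essentially the same contradiction argument as the paper: normalize to $\|f_n\|_{H^1}=1$, use the uniform bound on the nonlocal energy $I_n$ to obtain strong $L^2$ compactness of $\{\nabla f_n\}$ via the Ponce/BBM result, combine with Rellich--Kondrachov to extract a subsequence converging strongly in $H^1$ to a limit that must be zero, contradicting the normalization. Your intermediate appeal to Lemma~\ref{lem:Poincare_with_average} is valid but redundant, since after normalizing $\|f_n\|_{H^1}=1$ the $L^2$ bound on $\nabla f_n$ is already in hand.
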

\begin{proof}
Aiming at a contradiction, suppose that there exists $\gamma$ with the following property: there exists sequence $\{\varepsilon_n\}$ with $0<\varepsilon_n < \frac{1}{n}$ and sequence $\{f_n\}$ such that
$$
\| f_{n}\|^2_{H^1(\Td)} > \gamma \int_{\Td} \int_{\Td}  \frac{|\nabla f_{n}(x) - \nabla f_{
n}(y)|^2}{\eps_n^2} \omega_{\eps_n}(|x-y|) \diff x \diff y + n\, \|f_{n}\|^2_{L^2(\Td)}.
$$
As $\| f_{n}\|_{H^1(\Td)} > 0$, we may define $g_{n} := \frac{f_{n}}{\| f_{n}\|_{H^1(\Td)}}$. Note that $\|g_{n}\|_{H^1(\Td)} = 1$ and
$$
1 > \gamma \int_{\Td} \int_{\Td}  \frac{|\nabla g_{n}(x) - \nabla g_{n}(y)|^2}{\eps_n^2} \omega_{\eps_n}(|x-y|) \diff x \diff y + n\, \|g_{n}\|_{L^2(\Td)}^2.
$$
The first term gives compactness of the gradients (because $\{g_{n}\}$ is bounded in $H^1(\Td)$ so that, together with Rellich-Kondrachov, there exists function $g$ such that $g_{n} \to g$ in $H^1(\Td)$ (after passing to a subsequence). But then $g = 0$ because $n\, \|g_{n}\|_{L^2(\Td)} < 1$. This is however contradiction with $\|g\|_{H^1(\Td)} = \lim_{n \to \infty} \|g_n\|_{H^1(\Td)} = 1$.
\end{proof}

\section{Compactness in time/space with the Fréchet-Kolmogorov theorem}\label{app:FK_tx}

\begin{lem}
Suppose that $\{f_{\varepsilon}\}$ is a sequence bounded in $L^2((0,T)\times \Td)$ such that
\begin{itemize}
    \item $\p_{t}f_{\eps}= \nabla^k (J_{\eps})$, where $\nabla^k$ is any linear differential operator of order $k \in \N$ and $\{J_{\eps}\}$ uniformly bounded in $L^{1}((0,T)\times \Td)$,
    \item  $\{f_\eps\}$ is compact in space in $L^2((0,T)\times \Td)$, i.e.
\begin{equation}\label{item2compactness_2}
\lim_{\delta \to 0} \limsup_{\varepsilon \to 0} \int_0^T \int_{\Td} |f_\eps \ast \varphi_{\delta}(t,x) - f_\eps(t,x)|^2 \diff x \diff t = 0.
\end{equation}
uniformly for all $\eps$.
\end{itemize}
Then, $\{f_\eps\}$ is compact in time in $L^2((0,T)\times \Td)$, i.e.
\begin{equation}\label{item2compactness}
\lim_{h \to 0} \limsup_{\varepsilon \to 0} \int_0^{T-h} \int_{\Td} |f_\eps(t+h,x) - f_\eps(t,x)|^2 \diff x \diff t \to 0 \mbox{ as } h \to 0
\end{equation}
and so, it is compact in $L^2((0,T)\times \Td)$.
\end{lem}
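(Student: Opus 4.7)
The plan is to use the mollification-and-decomposition strategy together with the Fréchet-Kolmogorov criterion on $(0,T)\times\Td$. The key observation is that after spatial convolution with $\varphi_{\delta}$ all $k$ derivatives appearing in the equation $\partial_t f_{\varepsilon} = \nabla^k J_{\varepsilon}$ may be transferred onto the smooth kernel, so that the merely-$L^1$ control of $J_{\varepsilon}$ becomes sufficient to control the time derivative of $f_{\varepsilon}\ast\varphi_{\delta}$ in $L^2$. This is the heart of the matter, and everything else is a triangle-inequality bookkeeping.

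Concretely, I would split
\begin{equation*}
f_{\varepsilon}(t+h,x) - f_{\varepsilon}(t,x) = [f_{\varepsilon}-f_{\varepsilon}\ast\varphi_{\delta}](t+h,x) + [(f_{\varepsilon}\ast\varphi_{\delta})(t+h,x) - (f_{\varepsilon}\ast\varphi_{\delta})(t,x)] - [f_{\varepsilon}-f_{\varepsilon}\ast\varphi_{\delta}](t,x),
\end{equation*}
so that the first and third contributions are absorbed, up to $\limsup_{\varepsilon\to 0}$, by the spatial equicontinuity hypothesis \eqref{item2compactness_2}. For the middle term, since $\partial_t(f_{\varepsilon}\ast\varphi_{\delta}) = J_{\varepsilon}\ast\nabla^k\varphi_{\delta}$ (up to signs coming from integration by parts in $x$), the fundamental theorem of calculus combined with Young's convolution inequality yields
\begin{equation*}
\norm{(f_{\varepsilon}\ast\varphi_{\delta})(t+h) - (f_{\varepsilon}\ast\varphi_{\delta})(t)}_{L^2(\Td)} \le \norm{\nabla^k\varphi_{\delta}}_{L^2(\Td)}\int_t^{t+h}\norm{J_{\varepsilon}(s)}_{L^1(\Td)}\diff s.
\end{equation*}
Squaring, integrating in $t\in(0,T-h)$, and applying Fubini together with the uniform bound $\norm{J_{\varepsilon}}_{L^1((0,T)\times\Td)}\le C$ gives
\begin{equation*}
\int_0^{T-h}\norm{(f_{\varepsilon}\ast\varphi_{\delta})(t+h) - (f_{\varepsilon}\ast\varphi_{\delta})(t)}_{L^2(\Td)}^2\diff t \le C^2\,\norm{\nabla^k\varphi_{\delta}}_{L^2(\Td)}^2\, h,
\end{equation*}
which vanishes as $h\to 0$ once $\delta$ is kept fixed.

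To conclude, for a given $\eta>0$ I would first pick $\delta$ small enough, using \eqref{item2compactness_2}, so that $\limsup_{\varepsilon\to 0}\norm{f_{\varepsilon}-f_{\varepsilon}\ast\varphi_{\delta}}_{L^2((0,T)\times\Td)}^2 \le \eta$; then, with this $\delta$ now frozen, I choose $h$ small so that the mollified-term contribution $C^2\norm{\nabla^k\varphi_{\delta}}_{L^2}^2 h$ also does not exceed $\eta$. The triangle inequality then delivers \eqref{item2compactness}. Combined with the $L^2$-boundedness of $\{f_{\varepsilon}\}$ and the spatial equicontinuity \eqref{item2compactness_2}, the Fréchet-Kolmogorov theorem applied on $(0,T)\times\Td$ (treating the time coordinate as an extra spatial variable) yields strong compactness of $\{f_{\varepsilon}\}$ in $L^2((0,T)\times\Td)$. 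The main technical point is the derivative-transfer identity $\nabla^k J_{\varepsilon}\ast\varphi_{\delta} = J_{\varepsilon}\ast\nabla^k\varphi_{\delta}$, which is what makes the $L^1$-only control of $J_{\varepsilon}$ enough; the only subtlety to respect is the order of quantifiers --- $\delta$ must be chosen before $h$.
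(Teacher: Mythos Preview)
Your proposal is correct and follows essentially the same strategy as the paper: the same triangle-inequality splitting into two ``spatial'' pieces $f_\eps-f_\eps\ast\varphi_\delta$ and one ``mollified time-increment'' piece, the same derivative transfer $\nabla^k J_\eps\ast\varphi_\delta=J_\eps\ast\nabla^k\varphi_\delta$, and the same Fréchet--Kolmogorov conclusion. The only differences are organizational: the paper links $\delta$ to $h$ explicitly by setting $\delta^{2k+d}=h$ and estimates the middle term via Jensen's inequality to obtain a bound of order $h^2/\delta^{2k+d}$, whereas you fix $\delta$ first (from the tolerance $\eta$) and then choose $h$, bounding the middle term via Young's inequality together with the Fubini estimate $\int_0^{T-h}\big(\int_t^{t+h}\|J_\eps(s)\|_{L^1(\Td)}\diff s\big)^2\diff t\le \|J_\eps\|_{L^1}^2\,h$. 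Your treatment of the middle term is in fact slightly cleaner, since it uses only the $L^1_{t,x}$ bound on $J_\eps$ directly and avoids any implicit $L^2_tL^1_x$ integrability.
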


 We recall that
\begin{equation*}
 \|\nabla^{k}\varphi_{\delta}\|_{L^{p}(\Td)}\le\frac{C}{\delta^{k+d-d/p}},
\end{equation*}
and for any function $g\in L^{p}(\Td)$,
\begin{equation*}
\|g\ast\varphi_{\delta}\|_{L^{p}(\Td)}\le \|\varphi_{\delta}\|_{L^{p}(\Td)}\|g\|_{L^{1}(\Td)}.	
\end{equation*}

\begin{proof}
Using the mollifiers with $\delta = \delta(h)$ depending on $h$ to be specified later in the way that $\delta(h) \to 0$ as $h\to 0$, we first split
\begin{align*}
\int_{0}^{T-h}\int_{\Td}|f_{\eps}(t+h,x)-f_{\eps}(t,x)&|^2\diff x \diff t\le 4\int_{0}^{T-h}\int_{\Td}|f_{\eps}(t,x)-f_{\eps}(t,\cdot)\ast\varphi_{\delta}(x)|^{2}\diff x \diff t\\
&+4\int_{0}^{T-h}\int_{\Td}|f_{\eps}(t+h,x)-f_{\eps}(t+h,\cdot)\ast\varphi_{\delta}(x)|^{2}\diff x \diff t\\
&+4\int_{0}^{T-h}\int_{\Td}|f_{\eps}(t+h,\cdot)\ast\varphi_{\delta}(x)-f_{\eps}(t,\cdot)\ast\varphi_{\delta}(x)|^{2}\diff x \diff t.
\end{align*}
When we apply limit $\lim_{h\to 0} \limsup_{\varepsilon\to0}$, the first and second term vanish due to \eqref{item2compactness_2}.
It remains to study the third term which reads
\begin{align*}
\int_{0}^{T-h}&\int_{\Td}|f_{\eps}(t+h,\cdot)\ast\varphi_{\delta}(x)-f_{\eps}(t,\cdot)\ast\varphi_{\delta}(x)|^{2}\diff x \diff t= \\
&=\int_{0}^{T-h}\int_{\Td}\left|\int_{t}^{t+h}\p_{t}f_{\eps}(s,\cdot)\ast\varphi_{\delta}(x) \diff s\right|^{2}\diff x \diff t =\int_{0}^{T-h}\int_{\Td}\left|\int_{t}^{t+h}J\ast \nabla^k\varphi_{\delta}(s,x) \diff s \right|^{2}\diff x \diff t\\
&\le C h \int_{0}^{T-h}\int_{\Td}\int_{t}^{t+h}\left|J\ast \nabla^k\varphi_{\delta}(s,x)\right|^{2} \diff s \diff x \diff t,
\end{align*}
where we used Jensen's inequality. We perform the change of variables $s \mapsto v=\f{s-t}{h}$, use Fubini's theorem, and obtain  
\begin{equation*}
 h\int_{0}^{T-h}\int_{\Td}\int_{t}^{t+h}\left|J\ast\nabla^k\varphi_{\delta}(s,x)\right|^{2} \diff s\diff x \diff t=h^{2}    \int_{0}^{1}\int_{\Td}\int_{0}^{T-h}\left|J \ast \nabla^k\varphi_{\delta}(vh+t,x)\right|^{2}\diff t\diff x \diff v.
\end{equation*}
Then we use the change of variables $t \mapsto \tau=v\,h+t$ and obtain 
\begin{multline*}
 h^{2}\int_{0}^{1}\int_{\Td}\int_{0}^{T-h}\left|J \ast \nabla^k\varphi_{\delta}(vh+t,x)\right|^{2}\diff t\diff x \diff v=  \\ = h^{2}\int_{0}^{1}\int_{\Td}\int_{vh}^{T+h(v-1)}\left|J_{i}\ast \nabla^k\varphi_{\delta}(\tau,x)\right|^{2}d\tau \diff x \diff v \le \f{h^{2}}{\delta^{2k+d}}\|J_{\eps}\|^2_{L^{1}_{t,x}}.
\end{multline*}
Using the $L^{1}((0,T)\times\Td)$ bound on $\{J_{\eps}\}$ and choosing $\delta$ such that $\delta^{2k+d}=h$ we conclude that
\begin{equation*}
   \lim_{h\to 0} \limsup_{\varepsilon\to0}  \int_{0}^{T-h}\int_{\Td}|f_{\eps}(t+h,x)-f_{\eps}(t,x)|^2\diff x \diff t\le \theta(h).
\end{equation*}

Combined with the compactness in space~\eqref{item2compactness} and the Fréchet-Kolmogorov theorem we obtain the compactness of $\{f_{\eps}\}$ in $L^{2}((0,T)\times \Td)$. 
\end{proof}

\begin{rem}
Compared with the usual version of the Fréchet-Kolmogorov theorem, one would expect that the condition for compactness in space should read
\begin{equation}\label{eq:compactnes_space_usual}
\lim_{y \to 0} \limsup_{\varepsilon \to 0} \int_0^T \int_{\Td} |f_\eps(t,x+y) - f_\eps(t,x)|^2 \diff x \diff t = 0.
\end{equation}
However, by a careful inspection of the proof, \eqref{item2compactness_2} is sufficient and in fact, in the proof one deduces \eqref{item2compactness_2} from \eqref{eq:compactnes_space_usual}. 
\end{rem}

\bibliographystyle{abbrv}
\bibliography{NEWNEW_new_file}
\end{document}